

\documentclass{article}     

                         


\usepackage{amsmath}
\usepackage{amsthm}
\usepackage{amssymb}
\usepackage{latexsym}
\usepackage{graphics}
\usepackage{euscript}
\usepackage{anysize,amssymb}
\usepackage{enumerate}
\usepackage{tikz}
\usetikzlibrary{positioning,arrows,fit}
\usetikzlibrary{decorations.pathmorphing}




\def\A{{\mathfrak A}}




\def\kn{\kern.1em}

\def\Mod{{\rm Mod}}
\def\Log{{\rm Log}}
\def\liminv{{\rm lim }_{\leftarrow}}
\def\Fmc{\mathcal{F}}
\def\Cla{\Mod}
\def\C{\mathcal{C}}
\def\K{{\mathsf K}}
\def\inft{\infty}
\def\ue{{u.e.}}
\def\b{\Box}
\def\d{\Diamond}
\def\Lf{{{\cal L}f}}
\def\iffa{\Longleftrightarrow}
\def\rank{\mbox{ rank\,}}
\def\P{\mathcal{P}}
\def\a{\mathfrak{a}}


\def\F{\mathfrak{F}}
\def\D{\mathfrak{D}}
\def\T{\mathfrak{T}}

\def\V{\mathbf{V}}
\def\u{\mathbf{u}}
\def\v{\mathbf{v}}
\def\vv{\vec{\v}}
\def\vu{\vec{\u}}

\def\Del{\mathsf{Del}}
\def\Arr{\mathsf{Arr}}

\def\rank{\mathsf{Rank}}
\def\dist{\mathsf{Dist}}

\def\pr{{pr}}
\def\e{\mathsf{e}}
\def\E{\e}
\def\k{\mathsf{k}}
\def\A{\mathsf{a}}
\def\Tt{{\tilde \T}}


\theoremstyle{definition}
\newtheorem{teo}{Theorem}[section]  
\newtheorem{cor}[teo]{Corollary}
\newtheorem{prop}[teo]{Proposition}
\newtheorem{defi}[teo]{Definition}
\newtheorem{ex}[teo]{Example}
\newtheorem{lemma}[teo]{Lemma}



\author{Stanislav Kikot}
\title{A dichotomy for some elementarily generated modal logics}
\begin{document}
\maketitle

\begin{abstract}
In this paper we consider the normal modal logics of elementary classes defined
by first-order formulas of the form $\forall x_0 \exists x_1 \dots \exists x_n \bigwedge x_i R_\lambda x_j$.
We prove that many properties of these logics, such as finite axiomatisability, elementarity, 
axiomatisability by a set of canonical formulas or by a single generalised Sahlqvist formula, together with modal definability of the initial formula, either simultaneously hold or simultaneously do 
not hold.
\end{abstract}

\section{Introduction}

This research was motivated by the following observation. Consider two first-order conditions: 
$\forall x \exists y (x R y \land y R x)$ and $\forall x \exists y (x R y \land y R y)$ 
(see Figure~\ref{fig:1}).
The first one is modally definable by a Sahlqvist  formula $p \to \d \d p$ while the second is not, since it
does not reflect ultrafilter extensions (e.g., \cite{Bl}, p. 142). 
The difference between these two formulas becomes even more palpable
if we look at the modal logics $L_1$ and $L_2$ of the corresponding elementary classes. While $L_1$ is
axiomatisable (with the standard rules of Substitution, Modus Ponens and Necessitation) 
by a single Sahlqvist formula, $L_2$ is not finitely
axiomatisable and the class of Kripke frames $\{\F \mid \F \models L_2\}$ is not definable by any formula of first-order logic \cite{hughes}. Moreover, any axiomatisation of $L_2$ requires infinitely many non-canonical formulas
\cite{Hodkinson03canonicalvarieties}. On the other hand, both formulas have a common structure and 
can be represented by graphs as in Figure~\ref{fig:1}, which are called diagrams in this paper.
\begin{figure}[t]
\centering
\scalebox{0.88}{\hbox{\includegraphics{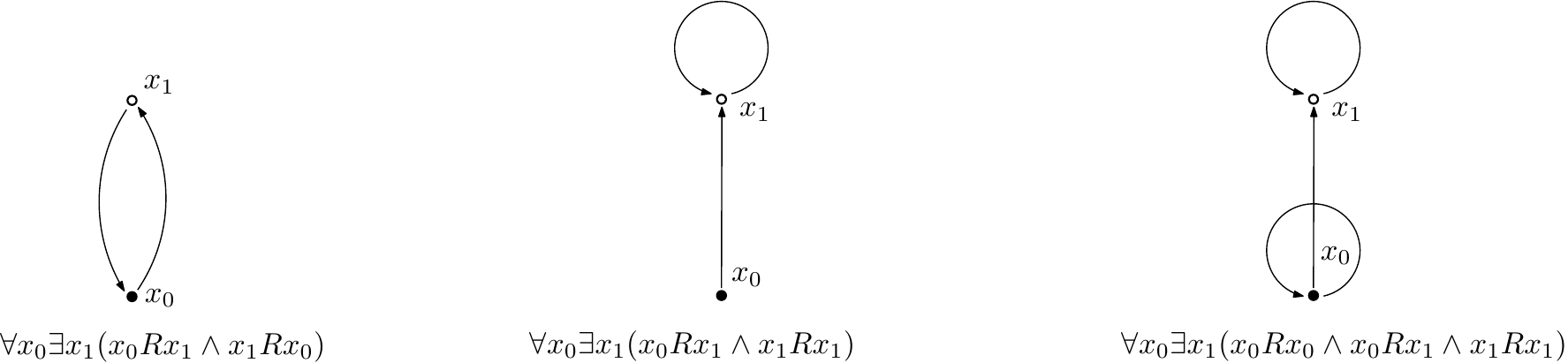}}}
\caption{Formulas and their diagrams (universally quantified variables are black, existentially quantified variables are white)}
\label{fig:1}
\end{figure}
The author decided that this issue is worthy of additional explanation.
So  a study with the purpose of classifying all elementary classes $\C$ 
definable by formulas of the form 
$\forall x_0 \E(x_0)$ where $\E(x_0) =  \exists x_1 \dots \exists x_n \bigwedge x_i R_\lambda x_j$ 
according to the following model-theoretic properties (whose precise definitions will be given in Section 2) was undertaken: 
\begin{enumerate}[(\it {I}-i)]
\item $\E(x_0)$ is modally definable by a generalised Sahlqvist formula; 
\item $\E(x_0)$ is locally modally definable; 
\item $\forall x_0 \E(x_0)$ is globally modally definable; 
\item $\Log(\C)$ is axiomatisable by a generalised Sahlqvist formula;
\item $\Log(\C)$ is finitely axiomatisable;
\item $\Log(\C)$ is axiomatisable by a set of modal formulas containing finitely many propositional variables;
\item $\Log(\C)$ is axiomatisable by a set of canonical formulas;
\item $\Log(\C)$ is axiomatisable by a modal formula $\phi$ and a set of canonical formulas;
\item $\{\F \mid \F \models \Log(\C)\} = \C$;
\item $\{\F \mid \F \models \Log(\C)\}$ is elementary\footnote{In this paper we call a class of first-order models elementary if it is defined by a single first-order sentence, and $\Delta$-elementary
if it is defined by a set of sentences.}.
\end{enumerate}

Briefly, we prove that for any class $\C$ in question, conditions ({\it I-i}) -- ({\it I-x}) either simultaneously hold, or simultaneously do not hold, and this is determined by the existence in the corresponding 
diagram of an undirected cycle not passing through the universally quantified point, provided that the diagram is 
``minimal'', i.e., none of its edges may be removed without affecting the corresponding formula, and ``rooted'', i.e., each of its points is reachable from $x_0$ via a directed path.

We exclude from our list such algorithmical properties as decidability, finite model property and complexity, and do not deal with them in this paper, since an easy (but seemingly unpublished) argument shows that all logics in our class have f.m.p. and are PSPACE-complete regardless of the mentioned cycle. But we cannot help mentioning that the dichotomies 
in the
complexity-theoretic setting have recently  become known to the logical community. 
For example, in \cite{hemaspaandra} the modal logics given by universal Horn sentences are classified into those that are in NP and 
those that are PSPACE-hard and this classification was further refined in \cite{DBLP:conf/lics/MichaliszynO12}. 
The authors of  \cite{DBLP:conf/aiml/KuruczWZ10} classified universal relational constraints with respect to 
the complexity of reasoning in the description logic $\mathcal{EL}$.

This work is in line with current research in theoretical modal logic. 
First, this result can be considered as a straighforward generalisation of Hughes' paper \cite{hughes} about the reflexive-successor logic.  The axiomatics of \cite{hughes} was generalised in \cite{BSS} to the case of first-order conditions of the form 
$\forall x \exists y (xR_\lambda y \land \phi(y))$ where $\phi(y)$ is a generalised Kracht formula
\cite{K2}, 
and for some particular logics of this form  finite axiomatisability, the finite model property and elementarity are studied there. 
The authors of \cite{BSS} also conjectured that within their class there is a coincidence between 
finite axiomatisability and elementarity, and between $\Delta$-elementarity and elementarity 
(cf. \cite{Benthem76a}).

Another central problem of modal logic is: given an elementary class, 
i.e., a first-order formula, provide an explicit axiomatisation of the corresponding modal logic (this was done in \cite{Hodkinson}), and describe its properties, for example, in terms of ({\it I-i})--({\it I-x}) (cf. problems 6.6 and 6.8 ibid.) Since the product of two elementary classes is elementary \cite{GSH1}, 
the school of many dimensional modal logic deals mainly with such problems 
(e.g., \cite{Agi2},\cite{DBLP:conf/aiml/Kurucz10}, and \cite{GZ} for older results). In general, the algorithmic problem `given a first-order formula, decide whether each of ({\it I-i})--({\it I-x}) holds' 
should be undecidable due to the undecidability of first-order logic. E.g., for ({\it I-iii}) it is Chagrova's theorem
\cite{DBLP:journals/jsyml/Chagrova91},\cite{ChL}, but it seems plausible that using the method of 
\cite{ChL} one can prove such undecidability results for all items. On the other hand, when we restrict attention
to a fragment of the first-order language with decidable implication, then we have chances to obtain such algorithmic criteria (as, e.g. in \cite{KHorn}), and the present paper is a step in this direction.

One more fundamental problem of modal logic is to study which implications between
({\it I-i})--({\it I-x}) hold, and which of these conditions are independent. A brief summary of known results
is given in \cite{Hodkinson} (see discussion after Problem 6.6), and we think that our result is interesting in
this context.

This paper also concerns a phenomenon called ``canonicity in the limit'', referring to the logics 
(or, more generally, in terms of universal algebra,  sets of equations that are true on some elementary class), 
that are canonical, but cannot be axiomatized by canonical formulas, and, even more, any axiomatisation of such logics requires infinitely many non-canonical axioms. Beside the aforementioned Hughes' logic, this issue includes the equational theories of representable relational \cite{Hodkinson03canonicalvarieties} 
and cylindrical \cite{Bulian} algebras,  and the well-known McKinsey-Lemmon logic \cite{GoldblattHodkinson2006}. 
This issuee was elaborated further in \cite{BulianHodkinson}.
It turns out that all logics under consideration in the present paper excepting those which are generally Sahlqvist have this property; thus, what was thought pathological for elementarily generated modal logics
can now be seen to be the norm.

And --- last but not least --- this paper can be regarded as a contribution to the question of whether there are natural generalisations of the Sahlqvist-Kracht correspondence in the basic modal language, besides \cite{Gor2}, \cite{V}, \cite{V2}, \cite{K2}. From our result it follows that the Kracht's theorem \cite{Kr1}, \cite{Kr2} cannot be generalised further within the formulas of our class with any of ({\it I-i})--({\it I-x}) as its consequence.

The outline of the paper is as follows. First, we take  a diagram, 
all the cycles of which pass through the root, and use the result from \cite{KZ} stating that 
the corresponding first-order formula 
$\E(x_0)$ is modally definable by a generalised Sahlqvist formula, and so, 
by the generalised Sahlqvist theorem \cite{Gor2},  
({\it I-i}) -- ({\it I-x}) hold. Then we have to 
take a diagram with a cycle not passing through the root, and show that ({\it I-i}) -- ({\it I-x}) do not hold. 
This can be done only if $\E(x_0)$ is ``minimal'', i.e., it does not contain atoms which can be thrown away without changing $\E(x_0)$ semantically. Indeed, the diagram in Figure~\ref{fig:1} on the right has a cycle not passing through the root, but it is modally definable, since it is equivalent to the reflexivity condition. So we additionally assume that the diagram is minimal. This can be done without any loss of generality, since we may take any formula of our class and remove superfluous edges until the formula becomes minimal. Under this assumption we prove that 
({\it I-i}) -- ({\it I-x}) do not hold in Sections 4 -- 8. For this purpose we need the axiomatisations of the corresponding modal logics,  
constructed in Section \ref{section:axiomatization}. Then we construct `non-standard frames' for our logic ensuring falsity of  ({\it I-i}) -- ({\it I-x}). 

\begin{figure}[t]
\centering
\includegraphics{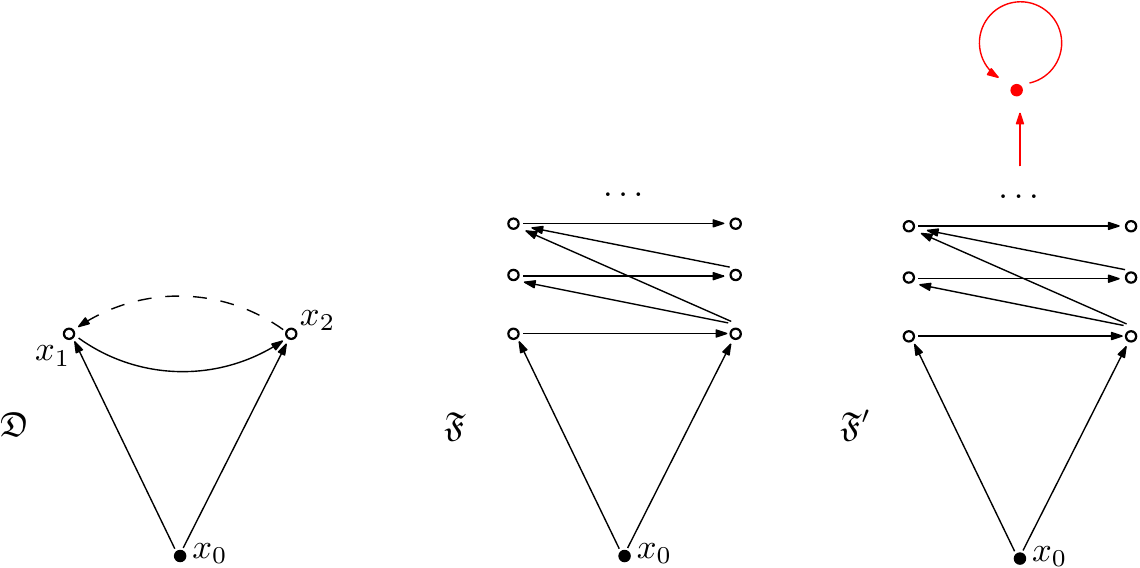}
\caption{}
\label{fig:outline}
\end{figure}

To understand the intuition underlying these non-standard frames and the problems arising in their construction, let us turn to \cite{KZ}, where similar frames are used to prove that ({\it I-ii}) does not hold for any diagram 
with a cycle of the given form.
Roughly, we temporarily remove one of the edges of a diagram $\D$ belonging to a cycle (dashed edge in $\D$ in Figure~\ref{fig:outline}), then clone the rest of the diagram 
(more precisely, all points except the root) $\omega$ times,  preserving edges of the diagram inside each layer and between the root and any layer, and insert the deleted arrow between corresponding points of different layers, from layers with lesser numbers to layers with greater numbers. Denote this Kripke frame by $\F$ (Figure~\ref{fig:outline}, in the middle). 
One can show that $\F$  has a root $r$ and satisfies $\F \not \models \E^\D(r)$ and $(\F)^{u.e.} \models \E^\D(r)$, yielding the negation of ({\it I-ii}). Now imagine that we want to generalise this construction to prove that  ({\it I-iii}) does not hold. In this case we need our construction to validate a stronger condition
$(\F)^{u.e.} \models \forall x \E^\D(x)$, so the construction must be modified accordingly. 
In many cases we can just `put on top' of  $\F$ a reflexive point (Figure~\ref{fig:outline}, on the right), 
but in general this approach does not work and
a more subtle construction is required. So in Lemma \ref{rank1} we `saturate' $\D$ by adding new points until it satisfies
$\forall x \E^\D(x)$, and thus construct a Kripke frame $\F^\D_+$. 
Then we delete an edge of the cycle, obtaining $\F^\D_-$, and use $\F^\D_+$ and $\F^\D_-$ 
instead of $\D$ in the construction of $\F$. Another component of these non-standard frames are probabilistic graphs of I. Hodkinson and Y. Venema.

\section{Preliminaries}

\subsection{Modal Formulas and Logics}

Fix a set of \emph{propositional variables} $PV=\{p_1, p_2, \dots\}$, a set of \emph{nominals} $NV = \{j_1, j_2, \dots\}$, and  a set of indices $\Lambda$. Propositional variables are also denoted by $p, q, r,\dots $ and nominals 
(only in this section) by $i$ and  $j$. 
\emph{Hybrid formulas} are built from propositional variables and nominals using
the constant $\bot$, the binary connective $\to$, and unary connectives $\d_\lambda$ for $\lambda \in \Lambda$ 
and $\exists i$ for $i \in NV$. 
Other constructs are defined as usual: in particular, $\b_\lambda$ is a shorthand for $\neg \d_\lambda \neg$ and
$\forall i $ is a shorthand for $\neg \exists i  \neg$. 
A \emph{Kripke frame} is a tuple $\F=(W^\F, (R^\F_\lambda: \lambda \in \Lambda))$ 
where $W^\F$ is a set (the carrier) and the $R^\F_\lambda$ are binary relations on $W^\F$;
instead of $(x,y) \in R^\F_\lambda$ we often write $x R^\F_\lambda y$ or $\F \models x R_\lambda y$. 
We consider the components of $\F$ as sets rather than the parts of the interpretation mapping, so occasionly
their superscripts may be different from $\F$ or just omitted.
A map $\theta: PV \to \P(W^\F)$ is called a \emph{propositional 
valuation (for a frame $\F$)}, and a map $\tau: NV \to W^\F$ is called a \emph{nominal valuation}.
A propositional valuation $\theta$ in a Kripke frame is called $k$-generated, 
if there are at most $k$ different propositional variables $p$, such that
$\theta(p)\neq\emptyset$.
Given a Kripke frame $\F$ and  valuations $\theta$ and $\tau$, 
we define the \emph{truth} of hybrid formulas in a point $x \in W^\F$ as usual:
$$\F, \theta, \tau, x \models p \iffa x \in \theta(p); \qquad \F, \theta, \tau, x \models j \iffa x = \tau(j);
\qquad \F, \theta, \tau, x \models \bot \mbox{ never };$$
$$\F, \theta, \tau, x \models \d_\lambda \phi \iffa \exists y \in W^\F \mbox{ such that } x R^\F_\lambda y \mbox{ and } 
\F, \theta, \tau, y \models \phi;$$
$$\F, \theta, \tau, x \models \phi \to \psi \iffa \mbox{ either }
\F, \theta, \tau, x \not\models \phi \mbox{ or } \F, \theta, \tau, x \models \psi;$$
$$\F, \theta, \tau, x \models \exists i\, \phi \iffa 
\begin{array}{c}
\mbox{for some nominal valuation $\tau'$ }
\mbox{ such that }\\ \tau'(j) = \tau(j) \mbox{ for all $j \in NV \setminus \{ i \}$ } 
\mbox{we have } \F, \theta, \tau', x \models \phi.
\end{array}
$$

A \emph{modal formula} is a hybrid formula without nominals and nominal quantifiers. The truth of a modal formula
$\phi$ at a point $w\in W^\F$ of a Kripke frame $\F$ depends only on the propositional valuation $\theta$ and is denoted by $\F, \theta, w \models \phi$. A modal formula $\phi$ is said to be \emph{valid in a point} $w \in W^\F$ of a Kripke frame $\F$ (denoted by $\F, w \models \phi$) if for all valuations $\theta$ we have $\F, \theta, w \models \phi$, and
is said to be \emph{valid in a Kripke frame $\F$} (denoted $\F \models \phi$) if for all $w\in W^\F$ we have
$\F, w \models \phi$.

We can regard Kripke frames as models for the classical first-order language $\Lf_\Lambda$, the signature of which consists of binary predicate symbols $R_\lambda$ for all $\lambda\in \Lambda$. We assume that $\Lf_\Lambda$ contains equality. 
The formulas of this language 
are called in the sequel simply \emph{first-order formulas.} 
Thus the truth relation $\F \models \A$ is also defined for closed first-order formulas $\A$,
also called \emph{first-order sentences}.
A first-order sentence $\A$ is said to be globally modally definable if there exists a modal formula $\phi$
such that for any Kripke frame $\F$, $\F \models \phi$ iff $\F \models \A$. A first-order formula $\A(x)$ with a single
free variable $x$ is said to be locally modally definable if for any Kripke frame $\F$ and any point $w$ in $\F$,
$\F, w \models \phi$ iff $\F \models \A(w)$ for some modal formula $\phi$.

Each first-order sentence $\A$ defines the class of Kripke frames $\Cla(\A) = \{\F \mid \F \models \A\}$. A class of Kripke frames $\C$ is said to be elementary if it is of this form, and $\Delta$-elementary if 
$\C = \bigcap_{k=1}^\infty \Cla(\A_k)$ for some sequence of first-order formulas $\{\A_k\}$.
Given a class of Kripke frames $\C$, by $\Log(\C)$ we denote 
the set of all modal formulas valid in all frames of $\C$.
A normal modal logic is a set of modal formulas containing all propositional tautologies, the formulas
$$\K_\lambda: \b_\lambda(p\to q)\to(\b_\lambda p \to \b_\lambda q), \quad \lambda\in\Lambda,$$  
and closed under inference rules Modus Ponens, Uniform Substitution and Necessitation:
$$
\begin{array}{c}
\phi, \phi\to \psi\\
\hline
\psi\\ 
\end{array} 
,
\quad\quad
\begin{array}{c}
\phi\\
\hline
\phi[\psi/p] \\ 
\end{array} 
,
\quad\quad
\begin{array}{c}
\phi\\
\hline
\b_\lambda \phi \\ 
\end{array} 
.
$$
It is easy to see that $\Log(\C)$ is always a normal modal logic. 
A set of modal formulas $\Sigma$ axiomatises a normal modal logic $L$ if $L$ is the minimal 
(w.r.t. set-theoretic inclusion) normal modal logic containing $\Sigma$, and in this case we write
$L = \K + \Sigma$.  A modal logic is said to be
finitely axiomatisable if it is axiomatised by some finite $\Sigma$, and axiomatisable using finitely
many variables if it is axiomatised by some $\Sigma$ such that only a finite number of propositional 
variables occur in $\Sigma$. A modal formula $\phi$ is said to be \emph{canonical} if it is valid in the 
canonical frame of the normal modal logic axiomatised by $\phi$. For a set $B$ we denote its powerset by $\P(B)$.

\subsection{Minimal Diagrams and Kripke frames}

Consider an arbitrary relational structure 
$\F = (W^\F, (R^\F_\lambda:\lambda\in\Lambda))$ where $R^\F_\lambda$ are binary relations on $W^\F$. 
For a binary relation $R$ by $\breve{R}$ we denote its converse $\{(x,y) \mid (y,x) \in R\}$.
A sequence $z_1\lambda_1 z_2\lambda_2\ldots \lambda_h z_{h+1}$ where for all $i$ 
$z_i \in W^\F$, $\lambda_i \in \Lambda$ and $(z_i, z_{i+1}) \in R^\F_{\lambda_i}$ is called
a {\sl directed path of length $h$ 
connecting $z_1$ to $z_{h+1}$ in $\F$}. To obtain the definition of an {\sl undirected path},
we put $\Lambda^\pm = \Lambda \cup \{ \lambda^- \mid \lambda \in \Lambda\}$, 
assume that $R^\F_{\lambda^-} = \breve{R^\F_\lambda}$
and replace $\Lambda$ with $\Lambda^\pm$ in the definition of a directed path. 
An {\sl undirected cycle} is an undirected path of positive length $h$ with $z_1=z_{h+1}$ and
not containing a subsequence of the form $z \lambda y \lambda^- z$ and $z \lambda^- y \lambda z$.  
The {\sl distance } from $y \in W^\F$  and $z\in W^\F$ in $\F$ (denoted by $\dist_\F(y,z)$) is the length of the shortest directed path connecting $y$ to $z$ in $\F$; if such path does not exist, we put $\dist_\F(y,z) = \infty$.

In this paper we deal with \emph{pointed Kripke frames}, in other words, with tuples of the form 
$\D = (W^\D, (R^\D_\lambda: \lambda \in\Lambda), x^\D_0)$, 
where $R^\D_\lambda$ are binary relations on $W^\D$  and $x^\D_0 \in W^\D$. 
A pointed Kripke frame $\D$ is called \emph{rooted} if for any point $y$ of $W^\D$ there exists a directed path leading from $x^\D_0$ to $y$. A \emph{diagram} is a pointed rooted Kripke frame with a finite domain. 
 An undirected cycle in a diagram $\D$  is said to be {\sl inner} if it does not contain the root $x^\D_0$.

Assume that $W^\D = \{x_0, x_1, \ldots, x_n\}$ and that $x^\D_0 = x_0$. The symbols $x_i$ will play a double role:
first, they are the points of the diagram, second, they are first-order variables in the formulas defined below.
We set
$$\k^\D(x_0, x_1, \ldots, x_n) = \bigwedge\{x_i R_\lambda x_j \mid i,j \le n,\, x_i R^\D_\lambda x_j\}$$ 
and
$$\e^\D(x_0) = \exists x_1 \ldots \exists x_n \k^\D ( x_0, x_1, \ldots, x_n).$$

We say that a diagram $\D' = (W^{\D'}, (R^{\D'}_\lambda: \lambda \in\Lambda), x^{\D'}_0)$ \emph{is obtained from 
a diagram $\D = (W^\D, (R^\D_\lambda: \lambda \in\Lambda), x^\D_0)$ by deleting the edge } $x R_{\lambda_0} y$ 
and write $\D' = \D - (x, y, \lambda_0)$ if $(x,y) \in R^\D_{\lambda_0}$,
$W^{\D'}= W^\D$, $x^{\D'}_0 = x^\D_0$,
 $R^{\D'}_{\lambda_0}= R^\D_{\lambda_0} \setminus\{(x,y)\}$ and for all $\lambda \neq \lambda_0$ 
$R^{\D'}_\lambda= R^\D_\lambda$.
A diagram  $\D$ is called \emph{globally (locally) minimal} if for any diagram $\D'$ obtained from  $\D$ by deleting an edge it is not true that $\vdash_{FOL} \forall x_0 \E^{\D'}(x_0) \to \forall x_0 \E^\D(x_0)$ 
(respectively, $\vdash_{FOL} \forall x_0(\E^{\D'}(x_0) \to \E^\D(x_0))$); $FOL$ here means the classical first-order logic. Global minimality implies local minimality, 
but  the converse fails in general.  For instance, the diagram $\D$ corresponding to the formula $\E^\D(x_0) = \exists x_1 \exists x_2 (x_0 R x_1 \land x_1 R x_2)$ is locally but not globally minimal.

{
\def\Iff{\Longleftrightarrow}
\def\IFF{\Longleftrightarrow}
\subsection{Ultrafilters, ultrafilter extension and ultraproducts}\label{Sect:Ultrafilter}

A set $u \subseteq \P(W)$ is an
\emph{ultrafilter} over a set~$W$ if, for all ${X,Y \subseteq W}$,
\begin{itemize}
\item[(u1)] if $X,Y \in u$, then  $X\cap Y \in u$;
\item[(u2)] if $X \in u$ and $X \subset Y$, then  $Y \in u$;
\item[(u3)] $X \notin u$ iff  $\bar X \in u$, where $\bar X$ denote the complement $W\setminus X$.
\end{itemize}

From the definition it follows that for any ultrafilter $u$ over a set $W$
$\emptyset \notin u$ and $W \in u$.

Given a frame $\F = (W, (R_\lambda: \lambda\in\Lambda))$,
its \emph{ultrafilter extension} is defined as 
the frame
$\F^\ue = (W^\ue,(R_\lambda^\ue :\lambda\in\Lambda))$, where
$W^\ue$ is the set of all ultrafilters over~$W$,
and $u R_\lambda^\ue u'$ holds for ultrafilters~$u$ and~$u'$
iff $R_\lambda^{-1}(X)\in u$ for all ${X \in u'}$, where 
$R_\lambda^{-1}(X) = \{ z \mid z R_\lambda x \mbox{ for some } x\in X\}$.
Given a point $a\in W$, the set $\pi_a=\{X\subseteq W\mid a \in X\}$
is obviously an ultrafilter; it is called the \emph{principal ultrafilter} generated by~$a$.

\begin{lemma}[\cite{Bl}, p. 95]\label{uf3}
For any points $a,b$ in any frame $\F$, 
$a R_\lambda b$ $\Iff$ $\pi_a R_\lambda^\ue\pi_b$.
\end{lemma}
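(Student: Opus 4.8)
The plan is to prove the two directions separately by directly unwinding the definitions of the principal ultrafilter $\pi_a$ and of the relation $R_\lambda^\ue$. Recall that, by definition, $\pi_a R_\lambda^\ue \pi_b$ means $R_\lambda^{-1}(X) \in \pi_a$ for every $X \in \pi_b$, and that $X \in \pi_c$ is equivalent to $c \in X$; the whole content of the lemma is that the principal ultrafilters embed the frame into its ultrafilter extension as a relational structure.

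For the forward implication I would assume $a R_\lambda b$ and verify the defining condition of $R_\lambda^\ue$. Take any $X \in \pi_b$, so that $b \in X$. Since $a R_\lambda b$ and $b \in X$, the point $a$ witnesses membership in the preimage, i.e. $a \in R_\lambda^{-1}(X)$, which is exactly $R_\lambda^{-1}(X) \in \pi_a$. As $X$ was arbitrary, this yields $\pi_a R_\lambda^\ue \pi_b$.

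For the converse the only idea needed is to feed a singleton into the defining condition. Assuming $\pi_a R_\lambda^\ue \pi_b$, note that $\{b\} \in \pi_b$ because $b \in \{b\}$. Hence $R_\lambda^{-1}(\{b\}) \in \pi_a$, that is $a \in R_\lambda^{-1}(\{b\})$, so there is some $x \in \{b\}$ with $a R_\lambda x$; necessarily $x = b$, and thus $a R_\lambda b$.

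I do not expect any real obstacle here: the statement is a direct consequence of the definitions. The only step worth isolating is the use of the singleton test set $\{b\}$ in the converse, which is precisely what lets us recover the concrete relation $R_\lambda$ from the quantification over all $X \in \pi_b$.
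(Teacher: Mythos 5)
Your proof is correct and is exactly the standard argument (the paper itself does not prove this lemma but cites \cite{Bl}, where the same unwinding of definitions, including the singleton test set $\{b\}$ for the converse direction, is used). Nothing to add.
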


{
\def\Fi{\phi}

\begin{lemma}[\cite{Bl}, p.~142]\label{Lemma:Standard:UE}
For any frame $\F\!$ and  modal formula~$\Fi$, 
$\F^\ue \models\Fi$ \ implies $\F\models\Fi$.
\end{lemma}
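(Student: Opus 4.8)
The plan is to prove the contrapositive: if $\F \not\models \phi$, then $\F^\ue \not\models \phi$. So I would start from a propositional valuation $\theta$ and a point $w \in W^\F$ with $\F, \theta, w \not\models \phi$. Writing $\|\psi\| = \{x \in W^\F \mid \F, \theta, x \models \psi\}$ for the truth-set of a modal formula $\psi$ under $\theta$, the idea is to lift $\theta$ to a valuation $\theta^\ue$ on $\F^\ue$ by declaring $\theta^\ue(p) = \{u \in W^\ue \mid \theta(p) \in u\}$. The core of the argument is then a \emph{truth lemma} asserting that for every modal formula $\psi$ and every ultrafilter $u \in W^\ue$,
$$\F^\ue,\, \theta^\ue,\, u \models \psi \quad \iffa \quad \|\psi\| \in u .$$
Granting this, since $w \notin \|\phi\|$ we get $\|\phi\| \notin \pi_w$ (recall $\pi_w = \{X \mid w \in X\}$), hence $\F^\ue, \theta^\ue, \pi_w \not\models \phi$, so $\F^\ue \not\models \phi$, as required.

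The truth lemma I would prove by induction on the structure of $\psi$. The base case $\psi = p$ is immediate from the definition of $\theta^\ue$, and $\psi = \bot$ holds trivially because $\|\bot\| = \emptyset \notin u$ by the ultrafilter axioms. For $\psi = \psi_1 \to \psi_2$ the induction hypothesis reduces the claim to the standard ultrafilter identities $\overline{X} \in u \iffa X \notin u$ and $X \cup Y \in u \iffa (X \in u \text{ or } Y \in u)$, both routine consequences of (u1)--(u3); one applies them to $\|\psi_1 \to \psi_2\| = \overline{\|\psi_1\|} \cup \|\psi_2\|$.

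The only substantial case, and the step I expect to be the main obstacle, is $\psi = \d_\lambda \chi$, where $\|\psi\| = R_\lambda^{-1}(\|\chi\|)$. The backward direction is easy: if $u R_\lambda^\ue u'$ and, by the induction hypothesis, $\|\chi\| \in u'$, then $R_\lambda^{-1}(\|\chi\|) \in u$ directly from the definition of $R_\lambda^\ue$. The forward direction is where the ultrafilter machinery is genuinely needed: assuming $R_\lambda^{-1}(\|\chi\|) \in u$, I must manufacture an ultrafilter $u'$ with $\|\chi\| \in u'$ and $u R_\lambda^\ue u'$. The plan is to show that the family $\{\|\chi\|\} \cup \{\overline{X} \mid R_\lambda^{-1}(X) \notin u\}$ has the finite intersection property and then invoke the ultrafilter theorem to extend it to some $u' \in W^\ue$. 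Any such $u'$ automatically satisfies $u R_\lambda^\ue u'$, since if some $X \in u'$ had $R_\lambda^{-1}(X) \notin u$, then $\overline{X}$ would lie in the family and hence in $u'$, contradicting $X \in u'$.

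To verify the finite intersection property, I would argue by contradiction: suppose $\|\chi\| \cap \overline{X_1} \cap \dots \cap \overline{X_k} = \emptyset$ with each $R_\lambda^{-1}(X_i) \notin u$. Then $\|\chi\| \subseteq X_1 \cup \dots \cup X_k$, so, using that $R_\lambda^{-1}$ commutes with unions, $R_\lambda^{-1}(\|\chi\|) \subseteq R_\lambda^{-1}(X_1) \cup \dots \cup R_\lambda^{-1}(X_k)$. Since $u$ is an ultrafilter and none of the $R_\lambda^{-1}(X_i)$ lies in $u$, their finite union lies outside $u$ as well; by upward closure (u2) this forces $R_\lambda^{-1}(\|\chi\|) \notin u$, contradicting the hypothesis. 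This closes the induction and the proof. The delicate point throughout is precisely this interplay between the ultrafilter axioms and the fact that the modal diamond corresponds to the union-preserving operator $R_\lambda^{-1}$, which is what makes the finite-intersection argument go through.
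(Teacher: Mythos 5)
Your proof is correct and is essentially the standard argument behind the cited result in Blackburn--de Rijke--Venema (the paper itself only cites \cite{Bl}, p.~142, where this same truth-lemma-by-induction proof is given): lift the valuation to $\theta^\ue(p)=\{u\mid\theta(p)\in u\}$, prove $\F^\ue,\theta^\ue,u\models\psi\iffa\|\psi\|\in u$ by induction, handling $\d_\lambda$ via the finite-intersection-property argument and the ultrafilter theorem, and conclude at the principal ultrafilter $\pi_w$. No gaps.
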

}

\begin{lemma}\label{ultra-n}
Let $u$ be an ultrafilter over $W$,  $W = W_1 \cup W_2 \cup \dots \cup W_n$, 
and $W_i \cap W_j = \emptyset$ for all $1 \le i\neq j \le n$. Then there exists
a unique $i$ such that $W_i \in u$.
\end{lemma}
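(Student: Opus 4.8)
The plan is to prove Lemma~\ref{ultra-n} by reducing it to the two defining properties of ultrafilters that distinguish them among filters, namely the finite intersection property (u1) and the prime/complement property (u3). The statement has two parts: existence of an $i$ with $W_i \in u$, and uniqueness of such an $i$. I would treat uniqueness first, since it is the easier half and is a direct consequence of (u1), and then handle existence by induction on $n$ using (u3).

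\medskip

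For uniqueness, suppose toward a contradiction that $W_i \in u$ and $W_j \in u$ for some $i \neq j$. By (u1) we get $W_i \cap W_j \in u$, but the hypothesis of disjointness gives $W_i \cap W_j = \emptyset$, so $\emptyset \in u$. This contradicts the remark made immediately after the definition of an ultrafilter, namely that $\emptyset \notin u$ for any ultrafilter $u$. Hence at most one of the $W_i$ can belong to $u$.

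\medskip

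For existence I would argue by induction on $n$. The base case $n = 1$ is immediate, since then $W_1 = W$ and $W \in u$ by the same post-definition remark. For the inductive step, write $W = W_1 \cup (W_2 \cup \dots \cup W_n)$ and set $Y = W_2 \cup \dots \cup W_n$, so that $W = W_1 \cup Y$ is a disjoint union and $Y = \bar{W_1}$ (the complement of $W_1$ in $W$). By (u3), either $W_1 \in u$ or $\bar{W_1} = Y \in u$. In the first case we are done with $i = 1$. In the second case, I would like to apply the induction hypothesis to the decomposition $Y = W_2 \cup \dots \cup W_n$; the one subtlety is that the induction hypothesis is stated for ultrafilters over the whole carrier, whereas $Y$ is a proper subset. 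To bridge this, I would observe either that the sets $W_2, \dots, W_n$ together with $W_1$ still partition $W$ and that (u3) applied repeatedly isolates exactly one block, or more cleanly reformulate the induction so that at each stage one peels off one block using (u3): concretely, since $Y \in u$ and $W_2, \dots, W_n$ partition $Y$, one checks $W_2 \in u$ or $Y \setminus W_2 \in u$ by (u3) and (u2), and iterates.

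\medskip

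I expect the only real obstacle to be this bookkeeping at the inductive step: ensuring that after splitting off $W_1$ the remaining sets still form a legitimate disjoint cover to which the hypothesis applies, and being careful that (u3) is phrased relative to complements in $W$. This is purely routine once set up correctly, so the lemma is essentially a direct unpacking of the ultrafilter axioms; the substantive content is just that an ultrafilter, being prime, must ``choose'' exactly one block of any finite partition.
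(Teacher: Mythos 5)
Your proof is correct. Both you and the paper prove the lemma by induction on $n$, and both handle uniqueness identically: two distinct blocks in $u$ would put their empty intersection into $u$ by (u1), contradicting $\emptyset\notin u$. The difference lies in how the inductive step is decomposed. The paper keeps every stage a partition of the full carrier $W$: it \emph{coarsens} the partition by merging $W_n$ and $W_{n+1}$ into a single block $V_n$, applies the induction hypothesis to the resulting $n$-block partition of $W$, and then splits $V_n$ back apart by a small contradiction argument (if neither $W_n$ nor $W_{n+1}$ were in $u$, intersecting their complements would force $W_1\cup\dots\cup W_{n-1}\in u$, contradicting (u3) together with $V_n\in u$). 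You instead \emph{peel off} $W_1$ directly via (u3) and recurse on $Y=W_2\cup\dots\cup W_n$, which is a partition of a proper subset of $W$; this is exactly why you need the relativization step you flag, namely intersecting the complement $W\setminus W_2$ (taken in $W$) with the set $Y$ already known to be in $u$. Your version gives a cleaner dichotomy at each step; the paper's version avoids relativization at the cost of a slightly more involved case split. One cosmetic remark: the step that extracts $Y\setminus W_2\in u$ from $W\setminus W_2\in u$ and $Y\in u$ is an application of (u1) (closure under intersection), not (u2) (upward closure) as you cite --- amusingly, the paper's own proof commits the same mislabelling at the analogous point.
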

\begin{proof}
Induction on $n$. The cases $n=1$, $n=2$ follow immediately from the definition of an ultrafilter.
Now suppose that the lemma is proven for some fixed $n$ and let us prove that it is
true for $n+1$.  Let $V_1 = W_1, \dots, V_{n-1} = W_{n-1}, V_n = W_{n}\cup W_{n+1}$. By inductive
assumption we get either $W_i \in u$ for some $1 \le i \le n-1$, or $W_{n}\cup W_{n+1} \in u$.
In the last case, if $W_{n} \notin u$ and $W_{n+1} \notin u$, then by (u3) we have
$W_1 \cup \dots \cup W_{n-1} \cup W_{n+1} \in u$ and $W_1 \cup \dots \cup W_{n}  \in u$, therefore,
by (u2) $W_1 \cup \dots \cup W_{n-1} \in u.$ This  contradicts (u3) and $W_{n}\cup W_{n+1} \in u$.
Thus there exists $1\le i \le n+1$ such that $W_i \in u$. If there are two such $i$'s, then
$\emptyset \in u$.
\end{proof}

We need yet another model-theoretic construction involving ultrafilters.
Suppose that we have a family of  Kripke frames 
$\F^i=(W^i, (R^i_\lambda:\lambda\in\Lambda) )$ for all $i \in \omega$
and a non-principal ultrafilter $u$ over $\omega$.
We say that two sequences 
$\bar \alpha = (\alpha_0, \alpha_1, \alpha_2, \ldots)$
and $\bar \beta = (\beta_0, \beta_1, \beta_2, \ldots)$, where $\alpha_i, \beta_i \in W^i$ for all 
$i\in \omega$ are {\sl $u$-equivalent} (denoted by  $\bar \alpha \sim_u\bar\beta$), 
if $\{i\mid \alpha_i = \beta_i\}\in u$. By $W$ we denote the set of all such sequences.
The equivalence class of a sequence
$\alpha$ we denote  by $\lceil \alpha \rceil$.
The $\Lf_\Lambda$-structure 
$\F=(W', (R'_\lambda:\lambda\in\Lambda) )$,  where 
$$W'=\{ \mbox{ all sequences of points from } W\} / \sim_u,$$
and $\lceil \bar\alpha \rceil R'_\lambda \lceil \bar \beta\rceil
\iffa \{i\mid \alpha_i R^i_\lambda \beta_i \} \in u$ 
is called an {\sl ultraproduct } of $\F^i$ and is denoted by
$\prod^u_{i\in\omega} \F^i$.

\begin{prop}[\cite{KeislerChang}, Thm 4.1.9]
\label{ultraproduct}
If $\C$ is an elementary class and $\{\F^i\}$ is a sequence of Kripke frames  from $\C$, then
for any ultrafilter $u$ on $\omega$, $\prod^u_{i\in\omega} \F^i \in \C$.
\end{prop}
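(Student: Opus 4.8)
The plan is to reduce the statement to the Fundamental Theorem of Ultraproducts (\L os's theorem) for the language $\Lf_\Lambda$. Since $\C$ is elementary, fix a first-order sentence $\A$ with $\C = \Cla(\A)$; by hypothesis $\F^i \models \A$ for every $i \in \omega$. It then suffices to show $\prod^u_{i\in\omega}\F^i \models \A$, for this gives $\prod^u_{i\in\omega}\F^i \in \C$ directly.

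The heart of the argument is a more general claim, proved by induction on the structure of first-order formulas: for every $\Lf_\Lambda$-formula $\phi(y_1,\ldots,y_k)$ and all sequences $\bar\alpha_1,\ldots,\bar\alpha_k$ representing elements of the ultraproduct,
$$\prod^u_{i\in\omega}\F^i \models \phi(\lceil\bar\alpha_1\rceil,\ldots,\lceil\bar\alpha_k\rceil) \iffa \{i \mid \F^i \models \phi(\alpha_{1,i},\ldots,\alpha_{k,i})\} \in u.$$
The base case covers the two kinds of atomic formulas of $\Lf_\Lambda$: for $R_\lambda(y_1,y_2)$ the equivalence is exactly the definition of the relation $R'_\lambda$ in $\prod^u_{i\in\omega}\F^i$, while for $y_1 = y_2$ it is the definition of $\sim_u$, which also shows that the interpretation is well defined on $\sim_u$-classes. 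The Boolean cases are routine: negation uses clause (u3) of the definition of an ultrafilter, and conjunction uses (u1) together with (u2), so that the index set of a conjunction lies in $u$ exactly when the index sets of both conjuncts do.

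The quantifier step is where care is needed, and I expect the existential case to be the main (if standard) obstacle. In the direction from the ultraproduct to the index set, a witness $\lceil\bar\gamma\rceil$ for $\exists y\,\phi$ immediately yields, via the induction hypothesis, an index set in $u$. The converse direction requires the Axiom of Choice: if $S = \{i \mid \F^i \models \exists y\,\phi(y,\bar\alpha)\} \in u$, then for each $i \in S$ one selects a witness $\gamma_i \in W^i$ (and $\gamma_i$ arbitrary otherwise), obtaining a sequence $\bar\gamma$ with $\{i \mid \F^i \models \phi(\gamma_i,\alpha_i)\} \supseteq S$; by (u2) this set lies in $u$, so the induction hypothesis gives $\prod^u_{i\in\omega}\F^i \models \phi(\lceil\bar\gamma\rceil,\ldots)$ and hence the existential. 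The universal case then follows by duality, using the shorthand $\forall = \neg\exists\neg$ and the already-established negation case.

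Finally, applying the claim to the sentence $\A$, which has no free variables, gives $\prod^u_{i\in\omega}\F^i \models \A \iffa \{i \mid \F^i \models \A\} \in u$. Since $\F^i \models \A$ for every $i$, the index set is all of $\omega$, which belongs to $u$; thus $\prod^u_{i\in\omega}\F^i \models \A$, so $\prod^u_{i\in\omega}\F^i \in \C$, completing the proof.
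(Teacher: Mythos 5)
Your proof is correct and is exactly the standard argument behind the result the paper cites without proof: Proposition~\ref{ultraproduct} is just \L os's theorem (Keisler--Chang, Thm 4.1.9) specialised to $\Lf_\Lambda$, and your induction on formula structure, with the Axiom of Choice used to pick witnesses in the existential step, is the canonical proof of that theorem. The only (routine) detail worth making explicit is that $R'_\lambda$ is well defined on $\sim_u$-classes, which follows from (u1) and (u2) just as in your equality case.
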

}

\subsection{Inverse limit of descriptive frames} 

A \emph {general frame} is a triple $(W, (R_\lambda: \lambda \in \Lambda), P)$ where 
$(W, (R_\lambda: \lambda \in \Lambda))$ is a Kripke frame and $P \subseteq \P(W)$ is non-empty and
closed under intersection, complement and $R_\lambda^{-1}$. A general frame
$(W, (R_\lambda: \lambda \in \Lambda), P)$ is said to be a \emph{descriptive frame} if
\begin{enumerate}
\item If $x,y \in W$ are distinct, then there is some $S \in P$ with $x\in S$ and $y \notin S$.
\item If $x, y \in W$ and $\neg R_\lambda(x,y)$, then there is some $S \in P$ with $x \in 
R_\lambda^{-1}(S)$ and $y \notin S$.
\item $\bigcap\mu \neq 0$ for every $\mu \subseteq P$ with finite intersection property.
\end{enumerate}
Below we denote general frames by calligraphic letters to distinguish between them and Kripke frames.
If $\F = (W, (R_\lambda: \lambda \in \Lambda))$ is a Kripke frame, we write $\F^+$ for 
$(W, (R_\lambda: \lambda \in \Lambda), \P(W))$. Clearly, if $\F$ is finite (i.e., $W$ is finite),
then $\F^+$ is a descriptive frame. If ${\cal F}=(W, (R_\lambda:\lambda \in \Lambda), P)$ is a descriptive
frame, we write ${\cal F}_+$ for its underlying Kripke frame $\F=(W, (R_\lambda:\lambda \in \Lambda))$.
Let $\mathcal{F}=(W, (R_\lambda:\lambda \in \Lambda), P)$ be a general frame and $\phi$ a modal formula. We say that $\phi$ is valid in $\mathcal{F}$, written $\mathcal{F}\models \phi$ if $(W, (R_\lambda:\lambda \in \Lambda), 
\theta, w \models \phi$ for every assigment $\theta: PV \to P$ and every $w \in W$.

An inverse family of descriptive frames is an object $\mathcal{I} = ((I, \le), \mathcal{F}_i, (f_{ij}: i \ge j 
\mbox{ in } I))$ where $(I, \le)$ is an upwards-directed partial order ('upwards-directed' means that any finite subset of $I$ has an upper bound in $I$), $\mathcal{F}_i = (W_i, ((R_i)_\lambda : \lambda \in \Lambda), P_i)$ 
is a descriptive frame for each $i \in I$, and for each 
$i, j \in I$ with $i \ge j$ $f_{ij}: \mathcal{F}_i \to \mathcal{F}_{j}$ is a frame homomorphism such that (a) $f_{ii}$ is the identity map on $W_i$, and (b) $f_{jk} \circ f_{ij} = f_{ik}$ whenever $k \le j \le i$ in $I$.

The inverse limit $\liminv \mathcal{I}$ of $\mathcal{I}$ is defined to be 
$\mathcal{F} = (W, (R_\lambda: \lambda \in \Lambda), P)$ where
$$W = \{ x \in \prod_{i \in I} W_i : f_{ij}(x_i) = x_j \mbox{ for each } i \ge j \mbox{ in } I\},$$
$$R_\lambda = \{ (x,y) \in W :  x_i (R_i)_\lambda y_i  \mbox{ for each } i \in I \},$$
$$P \mbox{ is generated by } \{f_i^{-1}[S]: i \in I, S \in P_i\},$$
where in the last line for each $i \in I$ $f_i: W \to W_i$ is the projection given by $f_i(x) = x_i$. 

\begin{prop}[\cite{G8}, 1.1.2(8), 1.11.4]\label{fact} 
The inverse limit $\mathcal{F}$ of $\mathcal{I}$ is itself a descriptive frame. Moreover,
for any modal formula $\phi$, if $\phi$ is valid in $F_i$ for each $i$, then $\phi$ is valid in $\mathcal{F}$.
\end{prop}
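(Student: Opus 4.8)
The plan is to prove the two assertions in turn, isolating at the outset a single \emph{thread-lifting} lemma that will power both arguments. Throughout I would use the standard topological reading of a descriptive frame: conditions~1 and~3 make each carrier $W_i$ a compact Hausdorff space whose clopen sets are precisely the members of $P_i$; each frame homomorphism $f_{ij}$ is then a continuous bounded morphism (a p-morphism that pulls admissible sets back to admissible sets); and condition~2 guarantees that every successor set $(R_i)_\lambda(a)=\{u : a\,(R_i)_\lambda\,u\}$ is topologically closed. First I would record the lemma: \emph{if $x\in W$, $i\in I$, $\lambda\in\Lambda$ and $w\in W_i$ satisfy $x_i\,(R_i)_\lambda\,w$, then there is $z\in W$ with $x\,R_\lambda\,z$ and $z_i=w$.} To prove it I would pass to the cofinal directed set $\{j : j\ge i\}$ and set $C_j=\{u\in W_j : x_j\,(R_j)_\lambda\,u \text{ and } f_{ji}(u)=w\}$. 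The back clause of the bounded morphism $f_{ji}$, applied to $f_{ji}(x_j)=x_i\,(R_i)_\lambda\,w$, shows $C_j\neq\emptyset$, and the forth clause shows $f_{jk}[C_j]\subseteq C_k$ for $i\le k\le j$, so the $C_j$ form an inverse system of nonempty sets. Each $C_j=(R_j)_\lambda(x_j)\cap f_{ji}^{-1}(\{w\})$ is closed (a successor set intersected with the continuous preimage of a point), hence compact, and the inverse limit of nonempty compact spaces over a directed index set is nonempty; any element of it is the required thread.

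With the lemma in hand the first assertion is largely bookkeeping. I would first note the clean description $P=\bigcup_{i\in I} f_i^{-1}[P_i]$: directedness of $I$ together with $f_i=f_{ki}\circ f_k$ lets any two pullbacks be lifted to a common level $k$, so $\bigcup_i f_i^{-1}[P_i]$ is already closed under the Boolean operations, and the lemma yields $R_\lambda^{-1}(f_i^{-1}[S])=f_i^{-1}[(R_i)_\lambda^{-1}(S)]$ (the nontrivial inclusion being exactly a thread-lifting), so it is closed under $R_\lambda^{-1}$ as well. \emph{Differentiation} is immediate: distinct threads differ at some coordinate $i$, where condition~1 for $\mathcal{F}_i$ separates them and the separating set pulls back along $f_i$. \emph{Tightness}: if $\neg R_\lambda(x,y)$ then $\neg(x_i\,(R_i)_\lambda\,y_i)$ at some $i$; apply condition~2 in $\mathcal{F}_i$ and pull the witness back, again using the lemma to rewrite $R_\lambda^{-1}$ of a pullback. \emph{Compactness}: realise $W$ as a closed subspace of the Tychonoff product $\prod_i W_i$, in which every member of $P$ is clopen, so any subfamily with the finite intersection property has nonempty intersection by compactness of the product (and nonemptiness of $W$ follows the same way, being an inverse limit of nonempty compacta).

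For the second assertion I would observe that the projection $f_i:W\to W_i$ is itself a continuous bounded morphism: the forth clause holds by the definition of $R_\lambda$ on $W$, and the back clause is precisely the thread-lifting lemma. Now fix an admissible valuation $\theta:PV\to P$ and a point $x\in W$. Since $\phi$ contains only finitely many variables and each value $\theta(p)$ lies in $\bigcup_i f_i^{-1}[P_i]$, directedness lets me choose one level $i$ with $\theta(p)=f_i^{-1}[S_p]$, $S_p\in P_i$, for every variable $p$ occurring in $\phi$. Setting $\theta_i(p)=S_p$ gives $\theta=f_i^{-1}\circ\theta_i$ on the relevant variables, so by the standard truth-preservation property of bounded morphisms under pulled-back valuations, $\mathcal{F},\theta,x\models\phi$ iff $\mathcal{F}_i,\theta_i,f_i(x)\models\phi$; the latter holds because $\phi$ is valid in $\mathcal{F}_i$. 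As $\theta$ and $x$ were arbitrary, $\mathcal{F}\models\phi$.

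The hard part is the thread-lifting lemma, and specifically the interplay it forces between the order-theoretic back condition of the bounded morphisms and the topological compactness of the $\mathcal{F}_i$: without point-closedness of the successor sets (which is where condition~2 enters) the sets $C_j$ need not be compact, and without condition~3 their inverse limit could be empty. Everything else---the reduction of a valuation to a single level, differentiation, and the Boolean bookkeeping for $P$---is routine once the lemma and the topological picture are available.
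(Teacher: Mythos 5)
The paper does not prove this proposition at all; it is imported by citation from Goldblatt's \emph{Mathematics of Modality}, and your argument is essentially the standard proof from that source: the thread-lifting lemma (nonemptiness of the inverse system of compact nonempty sets $C_j$, via the back clause of the bonding maps and point-closedness of successor sets) is exactly what makes the projections $f_i$ bounded morphisms, and the reduction of $P$ and of any valuation to a single level by directedness then yields both assertions as you describe. Your proof is correct, with the sole caveat that it depends on reading the paper's undefined phrase ``frame homomorphism'' as ``continuous bounded morphism of descriptive frames'' (forth and back clauses together with pullback of admissible sets) --- which is the reading the cited result requires and which you state explicitly at the outset.
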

Ignoring the line defining $P$, we obtain the definition of an inverse limit 
$\liminv \mathcal{I}$ of  families of Kripke frames and graphs. If $I = (\omega, \le)$ and $f_{ij}$ are
clear from context, we denote the system simply $\{F_i\}$, and the inverse limit by $\liminv F_i$.

We regard a general frame 
$(W, (R_\lambda: \lambda \in \Lambda), P)$ as a first-order structure whose domain is the disjoint union of $W$ and $P$, with unary relations defining $W$ and $P$ and binary relations $R_\lambda \subseteq W \times W$ and 
$\epsilon \subseteq W \times P$ interpreted in the natural way. It is easy to write down a finite set $\Delta$
of first-order sentences expressing that a structure for this signature is a general frame.

As is well known, every modal formula $\phi$ has a standard translation to a formula $ST_x(\phi)$ of first-order
logic with a free variable $x$. We modify this here by regarding propositional variables as first-order variables. For a propositional variable p, we define $ST_x(p)$ to be $x\, \epsilon\, p$. We put $ST_x(\top) = \top$, $ST_x(\phi \land \psi)$ and similarly for negation, $ST_x(\b_\lambda \phi) = \forall y (R_\lambda(x,y) \to ST_y(\phi))$ and $ST_x(\d_\lambda\phi) = \exists y (R_\lambda(x,y) \land ST_y(\phi))$, 
where $y$ is a new variable. For a formula $\phi(p_1, \ldots, p_n)$, we write
$ST(\phi)$ for the universal closure $\forall x \in W \forall p_1 \dots p_n \in P \ ST_x(\phi)$. For a set $X$ of modal formulas we write $ST(X)$ for $\{ST(\phi) : \phi \in X\}$.  Clearly, a modal formula $\phi$ is valid in a general frame $\mathcal{G}$ iff $ST(\phi)$ is true in it in first-order semantics:
\begin{equation}\label{h10}
\mathcal{G} \models \phi \iffa \mathcal{G} \models ST(\phi).
\end{equation}

Hence, $\phi$ is valid in a Kripke frame $\F$ iff $ST(\phi)$ is true in $\F^+$ in first-order
semantics: 
\begin{equation}\label{h11}
\F \models \phi \iffa \F^+ \models ST(\phi).
\end{equation}

\begin{lemma}[Lemma 4.2 from \cite{GoldblattHodkinson2006}]\label{Canonical}
Let $\Fmc=(W, (R_\lambda : \lambda \in \Lambda), P)$ be a descriptive frame with countable $P$, and $\phi$ be 
a canonical formula. Then $\Fmc \models \phi$ implies $\Fmc_+ \models \phi$.
\end{lemma}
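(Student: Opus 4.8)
The plan is to prove this by duality: I would identify $\Fmc$ with the dual frame of its dual algebra, transfer the hypothesis to that algebra, realise the algebra as a homomorphic image of the Lindenbaum--Tarski algebra of $L = \K + \phi$, and then read off that $\Fmc$ is a generated subframe of the canonical frame of $L$, on which $\phi$ is valid by canonicity. The first step is to note that validity in a general frame is closed under the inference rules: the set $\{\psi : \Fmc \models \psi\}$ contains all tautologies and each $\K_\lambda$, and is closed under Modus Ponens, Uniform Substitution and Necessitation, precisely because $P$ is closed under the Boolean operations and each $R_\lambda^{-1}$. Hence $\Fmc \models \phi$ upgrades to $\Fmc \models L$. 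Passing to the dual algebra $\mathfrak{A} = (P, \cap, {-}, (R_\lambda^{-1})_{\lambda \in \Lambda})$, which is a Boolean algebra with operators by the closure conditions on $P$, this says exactly that $\mathfrak{A}$ satisfies $\psi \approx \top$ for every $\psi \in L$, i.e. $\mathfrak{A} \models L$.

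Next I would use that $P$ is countable. Then $\mathfrak{A}$ is generated by at most countably many elements, so sending the propositional variables $p_1, p_2, \dots$ onto a generating set and extending gives a surjection from the formula algebra onto $\mathfrak{A}$; since $\mathfrak{A} \models L$ this map sends $L$-equivalent formulas to the same element and therefore factors through a surjective homomorphism $h : \mathfrak{L}_L \twoheadrightarrow \mathfrak{A}$, where $\mathfrak{L}_L$ is the Lindenbaum--Tarski algebra of $L$ over the countable set $PV$. The dual descriptive frame of $\mathfrak{L}_L$ is precisely the canonical frame $\mathfrak{F}_L$ of $L$ (its points are the ultrafilters of $\mathfrak{L}_L$, i.e. the maximal $L$-consistent sets), and its underlying Kripke frame $(\mathfrak{F}_L)_+$ is the canonical frame in the usual sense.

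Finally I would invoke the duality between descriptive frames and Boolean algebras with operators. Because $\Fmc$ satisfies the three defining conditions, the map $x \mapsto \{S \in P : x \in S\}$ is injective (by condition 1), relation-preserving and -reflecting (by condition 2), and onto the ultrafilters of $\mathfrak{A}$ (by condition 3), so $\Fmc$ is isomorphic to the dual frame $\mathfrak{A}_*$. Under the duality a surjective algebra homomorphism corresponds to a generated-subframe embedding of the dual frames, so $h$ exhibits $\Fmc \cong \mathfrak{A}_*$ as a generated subframe of $\mathfrak{F}_L$. Forgetting the admissible sets, $\Fmc_+$ is a generated subframe, in the Kripke sense, of $(\mathfrak{F}_L)_+$: its domain is closed under $R_\lambda$-successors and carries the restricted relations. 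Since $\phi$ is canonical we have $(\mathfrak{F}_L)_+ \models \phi$, and validity of modal formulas is inherited by generated subframes; hence $\Fmc_+ \models \phi$, as desired.

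I expect the main obstacle to be the duality bookkeeping: one must make sure the direction is right, namely that $\mathfrak{A}$ being a homomorphic image (not a subalgebra) of $\mathfrak{L}_L$ yields a generated subframe (not merely a bounded-morphic image) of $\mathfrak{F}_L$, and that this relation descends to the Kripke reducts so that downward preservation of validity applies. The representation theorem identifying $\Fmc$ with the dual of its dual algebra is where the three descriptive-frame axioms are genuinely used, and the reduction to countably many propositional variables is exactly where the countability of $P$ enters; without it $\mathfrak{A}$ need not be a quotient of the countably generated $\mathfrak{L}_L$.
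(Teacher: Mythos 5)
Your argument is correct and is essentially the standard proof of this lemma, which the paper itself does not reprove but imports as Lemma 4.2 of the cited Goldblatt--Hodkinson paper: there too one passes to the dual algebra, uses countability of $P$ to realise it as a quotient of the Lindenbaum--Tarski algebra of $\K+\phi$ over the countable variable set, and dualises the surjection to exhibit $\Fmc_+$ as a generated subframe of the canonical frame, to which validity of the canonical formula descends. No discrepancies to report.
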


The following lemma generalises the argument of (and is inspired by) Theorem~4.4 from \cite{GoldblattHodkinson2006}, but seemingly does not follow from that paper.
 
\begin{lemma}\label{l16}
Let $\gamma_i$ be a sequence of modal formulas such that $\gamma_{i_1}$ implies $\gamma_{i_2}$ if $i_2 < i_1$.
Suppose that for all $l$ there exists $n$ such that for all $k$  there
exists an inverse system of finite Kripke frames $\{\F_i\}$ such that:
\begin{enumerate}[({L}1)]
\item for all $i$ $\F_i \models \gamma_k$,
\item $\liminv \F_i \models \gamma _l$,
\item $\liminv \F_i \not \models \gamma_n$.
\end{enumerate}
Then any axiomatisation of $L = \K + \{\gamma_n : n \in \omega\}$ has infinitely many non-canonical axioms.
\end{lemma}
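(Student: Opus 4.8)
The plan is to argue by contradiction using a compactness-style argument at the level of canonical frames, exploiting the fact that the hypotheses (L1)--(L3) manufacture, for each $l$ and each $k$, an inverse limit that validates $\gamma_l$ but refutes some $\gamma_n$ while being an inverse limit of finite frames each validating $\gamma_k$. Since each $\F_i$ is finite, $\F_i^+$ is a descriptive frame, and $\liminv \F_i$ (taken in the category of descriptive frames, adding the $\P$-component generated by the preimages $f_i^{-1}[S]$) is again a descriptive frame by Proposition~\ref{fact}; moreover that proposition guarantees that any modal formula valid in all the $\F_i$ is valid in the inverse limit. The key observation I would isolate is that this inverse-limit descriptive frame has a \emph{countable} set of admissible sets: it is generated from the countably many finite powersets $P_i = \P(W_i)$ via the projection maps, so Lemma~\ref{Canonical} becomes applicable to it.

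First I would suppose, for contradiction, that $L = \K + \{\gamma_n : n \in \omega\}$ has an axiomatisation consisting of finitely many non-canonical formulas together with arbitrarily many canonical ones; more precisely, assume $L$ is axiomatised by a set $\Sigma$ containing only finitely many non-canonical formulas $\chi_1, \dots, \chi_m$. Because $L = \K + \{\gamma_n\}$ and the $\gamma_i$ are linearly ordered by implication ($\gamma_{i_1}$ implies $\gamma_{i_2}$ when $i_2 < i_1$), each $\chi_j$ is a theorem of $L$ and hence derivable from finitely many of the $\gamma_n$; by the implication-ordering a single $\gamma_l$ (with $l$ large enough) implies all the finitely many $\gamma$-axioms needed to derive $\chi_1, \dots, \chi_m$. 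So I would fix such an $l$, apply the hypothesis to obtain the corresponding $n$, and then for that $n$ (and an arbitrary $k$, say $k = n$ or $k$ chosen so that $\gamma_k \in \Sigma$-consequences) obtain the inverse system $\{\F_i\}$ with $\F_i \models \gamma_k$ for all $i$, $\liminv \F_i \models \gamma_l$, and $\liminv \F_i \not\models \gamma_n$.

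Next I would extract the contradiction. Let $\Fmc = \liminv \F_i$ be the inverse-limit descriptive frame, which has countable $P$. The plan is to show $\Fmc \models \chi_j$ for each non-canonical axiom $\chi_j$ and $\Fmc \models \psi$ for each canonical axiom $\psi$ of $\Sigma$, so that $\Fmc \models L$; in particular $\Fmc \models \gamma_n$. For the canonical axioms, validity transfers to the underlying Kripke frame by Lemma~\ref{Canonical} and back, but the cleaner route is: since $\gamma_l$ implies all the $\gamma$-axioms required to derive each $\chi_j$, and $\Fmc_+ \models \gamma_l$ follows from $\liminv \F_i \models \gamma_l$ together with Lemma~\ref{Canonical} applied to the canonical formula $\gamma_l$ (here I use that $\gamma_l$, being in $L$ and built as these elementarily-generated axioms, is canonical; if it need not be, I instead argue with $ST(\gamma_l)$ and the first-order description of $\Fmc_+$), I obtain $\Fmc_+ \models \chi_j$ for the non-canonical ones and $\Fmc \models \psi$ for the canonical ones. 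Hence $\Fmc_+$ validates a full axiomatisation of $L$, so $\Fmc_+ \models \gamma_n$, contradicting $\liminv \F_i = \Fmc_+ \not\models \gamma_n$ from (L3).

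The main obstacle, and the point I would handle most carefully, is the interplay between validity on the descriptive frame $\Fmc$ and on its underlying Kripke frame $\Fmc_+$, together with pinning down exactly which of the formulas involved are genuinely canonical. Lemma~\ref{Canonical} only lets canonical formulas descend from $\Fmc$ to $\Fmc_+$, so I must ensure that every canonical axiom of $\Sigma$ is literally valid on the descriptive frame $\Fmc$ (which follows from Proposition~\ref{fact} provided each $\F_i \models \psi$, and this is where the freedom to choose $k$ and to use $\gamma_k \Rightarrow \psi$ via the ordering is needed), while the finitely many non-canonical $\chi_j$ must be shown valid on $\Fmc_+$ purely as consequences of $\gamma_l$. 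The delicate step is therefore the bookkeeping that makes one index $l$ dominate all the finitely many non-canonical axioms simultaneously, so that the single inverse system supplied by the hypothesis suffices; this is exactly the role played by the linear implication-ordering of the $\gamma_i$, and it is what forces the conclusion that \emph{no} finite set of non-canonical axioms can ever suffice.
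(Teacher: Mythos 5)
Your overall architecture matches the paper's: assume an axiomatisation with only finitely many non-canonical axioms, form the descriptive inverse limit $\Fmc=\liminv \F_i^+$ with countable $P$, use Proposition~\ref{fact} to validate formulas on $\Fmc$, push the canonical axioms down to $\Fmc_+=\liminv\F_i$ via Lemma~\ref{Canonical}, get the non-canonical ones on $\Fmc_+$ from $\gamma_l$ via (L2), and contradict (L3). Two of your side remarks are harmless but confused: you do not need Lemma~\ref{Canonical} (or canonicity of $\gamma_l$) to get $\Fmc_+\models\gamma_l$, since $\Fmc_+$ literally \emph{is} $\liminv\F_i$ and (L2) is already a statement about that Kripke frame; and the intermediate claim ``$\Fmc\models L$, in particular $\Fmc\models\gamma_n$'' buys nothing, because general-frame validity of $\gamma_n$ on $\Fmc$ is compatible with (L3) --- the contradiction must be manufactured at the level of $\Fmc_+$.

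The genuine gap is in your treatment of the canonical part of the axiomatisation. You want ``$\Fmc\models\psi$ for \emph{each} canonical axiom $\psi$ of $\Sigma$'', obtained from $\F_i\models\psi$ for all $i$, which you propose to secure by choosing $k$ with $\gamma_k$ implying $\psi$. But $\Sigma$ may contain infinitely many canonical axioms, each requiring its own $k_\psi$, with no single $k$ dominating them all; and the hypothesis hands you only \emph{one} inverse system per choice of $k$. So with any fixed $k$ you cannot conclude that $\Fmc_+$ validates all of $\Sigma$, and your step ``hence $\Fmc_+$ validates a full axiomatisation of $L$'' fails. The missing idea is to first cut $\Sigma$ down: since $\gamma_n\in L=\K+\Sigma$ and derivations are finite (the paper phrases this as first-order compactness for $\Delta\cup ST(\Sigma\cup\{B\})\models ST(\gamma_n)$), there is a \emph{finite} $X\subseteq\Sigma$ of canonical axioms such that $X$ together with the non-canonical axioms already yields $\gamma_n$; only \emph{then} do you choose $k$ (after $l$ and $n$, as the quantifier order in the hypothesis permits) so that $\gamma_k$ implies every member of $X$. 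With that $X$ you only get $\Fmc_+\models X\cup\{\chi_1,\dots,\chi_m\}$ --- not all of $L$ --- but by the choice of $X$ this still forces $\Fmc_+\models\gamma_n$, contradicting (L3). Without this reduction the argument does not close; with it, your proof becomes essentially the paper's.
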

\begin{proof}
Suppose on the contrary that $L$ is axiomatised by a single axiom $B$ together with a set $\Sigma$ of canonical formulas. Since $\Sigma \cup \{B\}$ and $\{\gamma_k : k < \omega\}$ axiomatise the same logic, the two 
first-order theories $\Delta \cup ST(\Sigma \cup \{B\})$ and $\Delta \cup \{ST(\gamma_k) : k < \omega\}$
have the same models. Then by the first-order compactness we conclude:
\begin{enumerate}[(a)]
\item there is $l < \omega$ such that $\Delta \cup ST(\gamma_l) \models ST(B)$,\\
 since $l$ is fixed, we fix $n$ from the condition of lemma, then
\item there is a finite subset $X \subseteq \Sigma$ such that $\Delta\cup ST(X \cup \{B\}) \models ST(\gamma_n)$
\item there is a finite $k$ such that $\Delta \cup ST(\gamma_k) \models ST(X)$, without loss of generality, we may take $k > l$.
\end{enumerate}

The condition of the lemma gives us an inverse system $\{\F_i\}$.
Let $\Fmc = (W, (R_\lambda : \lambda\in\Lambda), P) = \liminv (\F_i^+)$.
Clearly,  $\Fmc_+ = \liminv \F_i$.
By Proposition~\ref{fact}, since all $\F_i \models \gamma_k$, $\Fmc \models \gamma_k$. Plainly, $\Fmc \models \Delta$. 
Now, by (c) and (\ref{h10}) we obtain that $\Fmc \models X$. The formulas in $X$ are assumed canonical, and $P$ by
construction is countable,  therefore by
Lemma \ref{Canonical} $\Fmc_+ \models X$ as well. By (\ref{h11}), $(\Fmc_+)^+ \models ST(X)$.

As $\Fmc_+ \models \gamma_l$, (\ref{h11}) gives $(\Fmc_+)^+ \models ST(\gamma_l)$. Clearly, 
$(\Fmc_+)^+ \models \Delta$. So by (a), $(\Fmc_+)^+\models ST(B)$. 
Now we have 
$(\Fmc_+)^+\models \Delta \cup ST(X\cup \{B\})$, so by (b) and (\ref{h11}) we arrive at 
$\Fmc_+ \models \gamma_n$,  a contradition to (L3).
\end{proof}

\section{Axiomatisation}\label{section:axiomatization}

Fix a rooted  diagram $\D$. 
An axiomatisation of its normal modal logic $L^\D = \Log(\forall x_0 \E^\D(x_0))$ can be obtained 
using the algorithm from \cite{Hodkinson}.
It allows one to write modal axioms
for any normal modal logic generated by a first-order formula $\phi$ preserved under p-morphic images, disjoint unions and generated submodels, and this is our case.  This algorithm is followed quite liberally, since we act within a very restricted class of formulas, and this allows us to keep the presentation simpler and closer to the ultimate goal of this paper.  We also give an independent and `handmade' proof of the soundness and completeness and invite connoisseurs to compare it with the general  machinery from the quoted paper. They will definitely note that in terms of \cite{Hodkinson} the set $\Psi$ below is nothing else but a \emph{display} and $\gamma^\D_{\Psi}$ an \emph{approximant} for the hybrid formula $\eta^\D$. We also note that our axioms and completeness proof are similar to those in \cite{Venema98} for the ``reflexive successor'' logic.  

We proceed in two stages:  first, we construct a `hybrid equivalent' of $\phi$, second, we translate these hybrid formulas into modal axioms. 
To translate $\E^\D(x_0)$ into hybrid language, we need to construct a \emph{spanning tree}\footnote{Traditionally, spanning trees are defined for unoriented graphs and are unoriented. Here we use an oriented modification of this notion, but we still call it a spanning tree.} for $\D$.

\begin{defi} 

A tuple $\T = (W, (R_\lambda:\lambda\in\Lambda),r)$ is called a {\sl tree with a root $r$} if the
following holds
 
1) $r \in W$,

2) $R_\lambda^{-1} (r) = \emptyset$ for all $\lambda \in \Lambda$, where 
$R_\lambda^{-1} (x) = \{ z \mid (z,x) \in R_\lambda\}$,

3) for all  $x\in W \setminus \{r\}$ there is a unique directed path from $r$ to $x$. 

A tuple $\T= (W^\D, (R^\T_\lambda:\lambda\in\Lambda), x_0)$ is called a {spanning tree}
for a diagram\\ $\D= (W^\D, (R^\D_\lambda:\lambda\in\Lambda), x_0)$, if $\T$ is a tree, and
for all $\lambda\in\Lambda$, $x, y \in W^\D$ $x R^\T_\lambda y$  implies $x R^\D_\lambda y$.
\end{defi}

\begin{prop}[e.g., Lemma 5.5 from \cite{KZ}]
For any rooted diagram $\D$ there exists a spanning tree $\T$ for $\D$.
\end{prop}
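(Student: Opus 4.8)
The plan is to build the spanning tree by a breadth-first selection of edges, keeping for each non-root vertex exactly one incoming edge that lies on a shortest directed path from the root. The finiteness and rootedness of $\D$ are what make this well defined.

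Since $\D$ is rooted and $W^\D$ is finite, for every $y \in W^\D$ the distance $d(y) := \dist_\D(x^\D_0, y)$ is a finite natural number, with $d(x^\D_0) = 0$ and $d(y) \ge 1$ for $y \neq x^\D_0$; moreover $x^\D_0$ is the only vertex at distance $0$, since a directed path of length $0$ connects a point only to itself. For each $y \neq x^\D_0$ I would fix a shortest directed path from $x^\D_0$ to $y$; its penultimate vertex $z$ satisfies $d(z) = d(y) - 1$ and $z R^\D_\lambda y$ for some $\lambda \in \Lambda$. Choose one such pair and record a predecessor $p(y) := z$ together with an edge label $\ell(y) := \lambda$. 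Then define $\T = (W^\D, (R^\T_\lambda : \lambda \in \Lambda), x^\D_0)$ by $R^\T_\lambda = \{(p(y), y) : y \neq x^\D_0,\ \ell(y) = \lambda\}$. By construction $R^\T_\lambda \subseteq R^\D_\lambda$, so $\T$ is a subgraph of $\D$ on the same domain with the same root, and only finitely many labels occur since $W^\D$ is finite.

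Conditions (1) and (2) of the definition are immediate: $x^\D_0 \in W^\D$, and every edge of $\T$ has a target $y \neq x^\D_0$, whence $(R^\T_\lambda)^{-1}(x^\D_0) = \emptyset$ for all $\lambda$. The content is condition (3). First note that in $\T$ each $y \neq x^\D_0$ has exactly one incoming edge, namely $(p(y), y)$ labelled $\ell(y)$, so every non-root vertex has in-degree $1$. For existence of a directed path from $x^\D_0$ to $y$, I would iterate the predecessor map: the sequence $y, p(y), p(p(y)), \dots$ has strictly decreasing $d$-value, hence reaches the unique distance-$0$ vertex $x^\D_0$ after exactly $d(y)$ steps, and reversing it yields a directed path in $\T$. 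For uniqueness I would induct on $d(y)$: any directed path in $\T$ from $x^\D_0$ to $y$ must use the unique incoming edge $(p(y), y)$ as its final edge, and deleting it leaves a directed path from $x^\D_0$ to $p(y)$, unique by the induction hypothesis; the base case $d(y) = 1$ forces $p(y) = x^\D_0$ and the path $x^\D_0 R^\T_{\ell(y)} y$.

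The only delicate point is to see that the predecessor-following sequence terminates at the root rather than looping, but this is exactly guaranteed by the strict decrease of $d$ along chosen edges, which simultaneously rules out directed cycles in $\T$. Once acyclicity and in-degree $1$ are in hand, uniqueness of paths is routine, so I expect no substantive obstacle beyond organising these verifications.
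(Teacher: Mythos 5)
Your construction is correct and complete: the shortest-path (breadth-first) predecessor tree, with the strict decrease of $\dist_\D(x_0^\D,\cdot)$ along chosen edges giving acyclicity and the in-degree-one property giving uniqueness of paths. The paper itself offers no proof, merely citing Lemma 5.5 of \cite{KZ}, and your argument is the standard one that such a citation presupposes, so there is nothing to compare beyond noting that you have supplied the routine details.
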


Now, to every $x_i$ we assign a nominal $j_i$ and the hybrid formula
$$\chi^\D_i = j_i \land \bigwedge_{x_i R^\D_\lambda x_k} \d_\lambda j_k.$$ 
Then, working by induction on $\T$,  moving from leaves to the root,
to any point  $x_i$ of  $W^\D$  we assign a hybrid formula
$$\eta^\D_i = \chi^\D_i \land \bigwedge_{x_i R^\T_\lambda x_k} \d_\lambda \eta^\D_k.$$ 

Put $\eta^\D = \eta^\D_0$. Now note that $\E^\D(x_0)$ is equivalent to $\exists j_1 \ldots \exists j_n \eta^\D$ in that sense that for any pointed Kripke frame $\F$ and its point $w$ we have 
$\F, w \models \eta^\D$ iff $\F \models \E^\D(w)$.
By  $\eta^\D(\phi_0, \phi_1, \ldots, \phi_n)$ we denote the result of the substitution 
of modal formulas $\phi_l$ for nominals $j_l$ in the formula $\eta^\D$ for $0 \le l \le n$. 
For a set of formulas $\Psi$ and a map
$\kappa : \{0, 1, \ldots, n\} \to  \Psi$ let 
$\eta^\D(\kappa)  = \eta^\D(\kappa(0), \kappa(1), \ldots, \kappa(n))$. 
Then we set $$\gamma^\D_\Psi = \bigvee_{\kappa: \{0,1, \dots, n\} \to \Psi} \eta^\D( \kappa),$$
where the disjunction is taken over all possible maps $\kappa: \{0,1, \dots, n\} \to \Psi$.
Finally, let  $\Psi_h  = \{ \bar p ^ \varepsilon \mid \bar p = 
\{p_1, \ldots, p_h\}, \varepsilon \in \{0,1\}^h\}$, where $h < \omega $ 
and $\bar p^{\varepsilon_1\dots\varepsilon_h} = p_1^{\varepsilon_1}\land \dots \land p_h^{\varepsilon_h},$
$p^1 = p$, $p^0 = \neg p$. 
\begin{teo}\label{Theorem:Axiomatisation}
$L^\D$ is axiomatised by the set of formulas $\{\gamma_{\Psi_h}^\D \mid h \in \omega\}$.
\end{teo}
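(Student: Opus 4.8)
The plan is to establish the two inclusions $\K + \{\gamma^\D_{\Psi_h} \mid h \in \omega\} \subseteq L^\D$ (soundness) and $L^\D \subseteq \K + \{\gamma^\D_{\Psi_h} \mid h \in \omega\}$ (completeness) separately. For soundness, fix $h$, a frame $\F \models \forall x_0 \E^\D(x_0)$, an arbitrary valuation $\theta$, and a point $w \in W^\F$; I must exhibit one disjunct $\eta^\D(\kappa)$ of $\gamma^\D_{\Psi_h}$ holding at $w$. Since $\F \models \E^\D(w)$, there are witnesses $a_0 = w, a_1, \dots, a_n$ with $\F \models \k^\D(a_0, \dots, a_n)$. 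Let $\kappa(i) \in \Psi_h$ be the unique state description over $p_1, \dots, p_h$ true at $a_i$ under $\theta$. Working up the spanning tree $\T$ from the leaves to the root, I check that $\F, \theta, a_i \models \eta^\D_i$ with each nominal $j_l$ replaced by $\kappa(l)$: at a leaf $\eta^\D_i = \chi^\D_i$, and $a_i$ satisfies $\kappa(i)$ by choice while each conjunct $\d_\lambda \kappa(k)$ coming from a $\D$-edge $x_i R^\D_\lambda x_k$ is witnessed by $a_k$ (as $a_i R^\F_\lambda a_k$ and $a_k \models \kappa(k)$); the induction step only adds conjuncts $\d_\lambda \eta^\D_k(\kappa)$ for $\T$-edges, which hold by the induction hypothesis since every $\T$-edge is a $\D$-edge. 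Taking $i = 0$ yields $\F, \theta, w \models \eta^\D(\kappa)$, so $\F \models \gamma^\D_{\Psi_h}$; as $\F$ and $h$ were arbitrary, all axioms lie in $L^\D$.

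For completeness it suffices, by the general completeness of normal modal logics, to satisfy every $L' := \K + \{\gamma^\D_{\Psi_h} \mid h \in \omega\}$-consistent formula in a Kripke frame validating $\forall x_0 \E^\D(x_0)$, since refuting some $\phi \in L^\D$ on such a frame would contradict $\phi \in L^\D$. It is tempting to realise the diagram at every maximal consistent set inside the canonical frame of $L'$, but this cannot work in general: a family $\Gamma_0, \dots, \Gamma_n$ of maximal consistent sets coherent along all diagram edges is exactly a copy of $\D$ inside the canonical frame, so its existence at every point would say that the canonical frame satisfies $\forall x_0 \E^\D(x_0)$, making $L'$ canonical --- which fails precisely for the diagrams with inner cycles that motivate the paper. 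What a single axiom $\gamma^\D_{\Psi_h}$ does provide, via the equivalence between $\eta^\D$ and $\E^\D$ noted before the theorem, is a tree of witnesses (along $\T$) carrying the state descriptions $\kappa(i)$, in which the non-tree $\D$-edges are realised only up to agreement on $p_1, \dots, p_h$ rather than exactly; with nominals these approximate names would be exact and the cycles would close, and this is the gap that the whole family $\{\gamma^\D_{\Psi_h} \mid h \in \omega\}$ must bridge.

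Accordingly, the plan for completeness is a limit construction rather than a single canonical model. Starting from a point refuting $\phi$, I will build an inverse system of finite Kripke frames $\{\F_i\}$ (viewed as descriptive frames $\F_i^+$) in which the diagram is realised more and more accurately as $i$ grows --- level $i$ separating witnesses with the distinguishing power matching $\gamma^\D_{\Psi_h}$ for larger and larger $h$ --- and pass to the inverse limit using the machinery of Section~2 (cf.\ Proposition~\ref{fact}). Because the inverse limit of finite frames is profinite, hence compact, a K\"onig-type argument threads the per-level approximate realisations of $\D$ into genuine threads, so that in the limit the non-tree edges close up and the underlying Kripke frame satisfies $\forall x_0 \E^\D(x_0)$; meanwhile the refutation of $\phi$, depending on only finitely much of the valuation, is tracked through the projections and survives to the limit. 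The main obstacle is exactly this upgrade from approximate to exact realisation: arranging the finite frames and their connecting maps so that the diagram is realised to the required finitary precision at every level, that the projections carry realisations to realisations, and that the cycles therefore close in the compact limit while the refutation of $\phi$ is preserved. This is where the distinction between cyclic and acyclic diagrams, invisible to any single approximant, must be reconciled with uniform completeness.
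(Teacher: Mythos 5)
Your soundness half is correct and essentially identical to the paper's. The completeness half, however, contains a genuine error and then an unfinished plan. You assert that realising the diagram at every maximal consistent set of the canonical frame of $L' = \K + \{\gamma^\D_{\Psi_h} \mid h \in \omega\}$ ``cannot work in general'' because it would make $L'$ canonical, ``which fails precisely for the diagrams with inner cycles.'' This conflates canonicity of the logic with axiomatisability by canonical formulas. These logics \emph{are} canonical --- they are logics of elementary classes, hence canonical by the Fine--van Benthem theorem, and the paper's theme of ``canonicity in the limit'' is exactly that they are canonical yet admit no axiomatisation by canonical formulas (Theorem~\ref{main_can}). The paper's completeness proof does precisely what you rejected: it shows the canonical frame of $L'$ satisfies $\forall x_0\, \E^\D(x_0)$ at every point. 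The mechanism you were missing is that Uniform Substitution lets one instantiate $\gamma^\D_{\Psi_m}$ not just with state descriptions over propositional variables but over the first $m$ formulas in an enumeration $\psi_1,\psi_2,\dots$ of \emph{all} modal formulas. For each $m$ this yields, at any point $w_0$ of the canonical frame, an approximate realisation of $\D$ whose witnesses are specified up to their behaviour on $\psi_1,\dots,\psi_m$; these approximations form a finitely branching, prefix-closed tree, and K\"onig's lemma extracts an infinite branch that pins down genuine maximal consistent sets $w_1,\dots,w_n$, for which one then verifies the canonical accessibility relations directly. So the ``upgrade from approximate to exact realisation'' that you correctly identified as the crux is resolved inside the single canonical frame, not by an inverse system of finite frames.

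Your proposed alternative --- an inverse limit of finite descriptive frames with a compactness argument --- is not carried out: you explicitly leave the ``main obstacle'' (arranging the finite approximants and their connecting maps so that realisations survive to the limit) unresolved, and no construction of the $\F_i$ from a consistent formula $\phi$ is given. As it stands the completeness direction is therefore not proved. Incidentally, inverse limits of finite frames are used in this paper for the opposite purpose (Lemma~\ref{l16} and Section~7, to produce descriptive frames whose underlying Kripke frames \emph{refute} the axioms), which is a further hint that this machinery is not the natural vehicle for the completeness proof here.
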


\begin{proof}{\it Soundness.} Given a Kripke frame $\F = (W^\F, (R^\F_\lambda : \lambda \in \Lambda))$, we show
that for any $h\in \omega$ and $w_0 \in W^\F$, $\F \models \E^\D(w_0)$ implies $F, w_0  \models \gamma_{\Psi_h}^\D$.
Suppose that $\F \models \E^\D(w_0)$, hence there exist points $w_1, \dots w_n \in W^\F$ such that
$\F \models \k^\D(w_0, w_1, \dots, w_n)$. Let $\theta$ be
a valuation on $\F$. For a fixed $i$, $0 \le i \le n$, let $\varepsilon(i)$ be the boolean vector, the $j$-th component of which
tells whether $w_i$ belongs to $\theta(p_j)$, and let $\kappa(i) = \bar p ^{\varepsilon(i)}$. It is easy to check that $\F, \theta, w_0 \models \eta^\D(\kappa)$.

{\it Completeness.}
Let $\F = (W^\F, (R^\F_\lambda:\lambda\in\Lambda))$ be the canonical frame for 
$\K +\{\gamma_{\Psi_h}^\D \mid h \in \omega\}$, i.e., $W^\F$ is the set of all  maximal  
$(\K +\{\gamma_{\Psi_h}^\D \mid h \in \omega\})$-consistent sets (mcs) of formulas, and 
for any $w,w' \in W^\F$, $w R^\F_\lambda w'$ 
iff for all formulas $\phi \in w'$ $\d_\lambda \phi \in w$. We show that $\F \models \forall x_0 \E^\D(x_0)$, and it follows that the logic 
$\K +\{\gamma_{\Psi_h}^\D \mid h \in \omega\}$ is Kripke complete with respect to the
elementary class defined by this formula, i.e.,$\forall x_0 \E^\D(x_0)$.

Take $w_0 \in W^\F$\hspace{-0.7mm}.\hspace{-0.3mm} Let us prove that there exist 
$w_1, \dots, w_n \in W^\F$ such that $\F \models \k^\D(w_0, \dots, w_n)$.
By $\V$ we denote the set of word tuples $\vv = (\v_0, \v_1, \dots, \v_n)$, where $\v_i \in \{0,1\}^*$.
We set $\vv \prec \vu$ iff for every $0 \le i \le n$ $\v_i$ is an initial segment of $\u_i$;
thus $(\V, \prec)$ is a transitive tree with branching $2^{n+1}$ in each node.
Suppose that $\psi_1, \psi_2, \psi_3, \dots$ is an enumeration of all modal formulas.
For a word $\v \in \{0,1\} ^*$, by $\v^j$ we denote the $j$-th symbol of $\v$ and we set
$$
\v^\# = \bigwedge_{j=1}^{|\v|} \psi_j^{\v^j},
$$
where for a modal formula $\psi$, $\psi^1 = \psi, \psi^0 = \neg \psi$.
By $\vv^\#$ we denote $\eta^\D(\v_0^\#, \v_1^\#, \dots, \v_n^\#)$. We say that $\vv$ is good if
$\F, \theta,w_0 \models \vv^\#$, where $\theta$ is the canonical valuation. 

Claim 1. If $\vv \prec \vu$ and $\vu$ is good, then so is $\vv$. Indeed, if $\vv \prec \vu$, then $\vu^\#$ implies $\vv^\#$.

Claim 2. For each $m$ there is a good $\vv$ such that for all $1 \le i \le n$ 
$|\v_i| = m$. To prove this claim, it is enough to notice that
$$\bigvee_{\vv : |\v_i|=m} \vv^\#$$
is a substitution instance of $\gamma_{\Psi_m}^\D$.

By K\"onig's lemma applied to $(\V, \prec)$  there exists a tuple of infinite strings 
$$\v^\inft=(\v^\inft_0,\v^\inft_1, \dots, \v^\inft_n),$$ such
that any tuple formed by the initial segments of its components is good. 
By setting  $w_i = \{\psi_j \mid (\v^\inft_i)^j = 1\}\cup\{\neg \psi_j \mid (\v^\inft_i)^j = 0\}$ for $1\le i \le n$,
the tuple $\vv^\inft$ gives rise to mcs'es $w_1, \dots, w_n$. It is clear that
$w_0 = \{\psi_j | (\v^\inft_0)^j = 1\}\cup\{\neg \psi_j | (\v^\inft_0)^j = 0\}$.

Now, we take $h$, $l$ and $\lambda$ such that $x_h R^\D_\lambda x_l$ and prove that $w_h R^\F_\lambda w_l$
in the canonical model. Suppose that for some $i$ $\psi_i \in w_l$ but $\d \psi_i \notin w_h$.
But $\b_\lambda \neg \psi_i = \psi_k$ for some $k$. Take $m = \max(i,k)$, and by
$\vv$ denote the word vector formed by the first $m$ bits of components of $\vv^\inft$.
Since $\vv$ is good, we have $w_0 \models \vv^\#$, which contradicts the consistency of $w_0$.
Indeed, since  $x_h R^\D_\lambda x_l$, $\eta^\D$ contains $j_h \land \d j_l$ as a subformula, maybe preceded by
diamonds and conjunctions, and thus $\vv^\#$ is built from $\{\psi_1, \dots, \psi_m\}$ using $\land$ and diamonds and has a subformula $\b_\lambda \neg \psi_i \land \d_\lambda \psi_i$.
\end{proof}

Let $d$ be the depth of the spanning tree $\T$ for $\D$ used in the construction of $\eta^\D$. 
We will also use formulas 
$\gamma^\D_m = \b^{\le d} (p_1 \lor \dots \lor p_m) \to \gamma_{\{p_1, \dots, p_m\}}^\D,$
which are more convenient to work with hereafter. 
Since $\gamma^\D_{\Psi_h}$ is equivalent to a substitution instance of 
$\gamma^\D_{2^h}$, we have
\begin{cor}
$L^\D$ is axiomatised by $\{\gamma^\D_m \mid m \in \omega\}$.
\end{cor}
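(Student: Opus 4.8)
The plan is to show that the two axiom sets $\{\gamma^\D_{\Psi_h}\mid h\in\omega\}$ and $\{\gamma^\D_m\mid m\in\omega\}$ generate the same normal modal logic; since Theorem~\ref{Theorem:Axiomatisation} already identifies the former with $L^\D$, this gives the corollary. I would establish the two inclusions $L^\D\subseteq\K+\{\gamma^\D_m\}$ and $\K+\{\gamma^\D_m\}\subseteq L^\D$ separately, using that both are normal modal logics and hence closed under Uniform Substitution and propositional consequence.

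For $L^\D\subseteq\K+\{\gamma^\D_m\}$ I would follow the hint preceding the statement and check that $\gamma^\D_{\Psi_h}$ is propositionally equivalent to the substitution instance of $\gamma^\D_{2^h}$ obtained by substituting, for the $2^h$ variables $p_1,\dots,p_{2^h}$, the $2^h$ atoms $\bar p^\varepsilon$ with $\varepsilon\in\{0,1\}^h$ that make up $\Psi_h$. Under this substitution the antecedent $\b^{\le d}(p_1\lor\dots\lor p_{2^h})$ becomes $\b^{\le d}(\bigvee_\varepsilon \bar p^\varepsilon)$, and since the atoms are jointly exhaustive their disjunction is a tautology, so the antecedent collapses to $\top$; meanwhile the consequent $\gamma^\D_{\{p_1,\dots,p_{2^h}\}}=\bigvee_\kappa\eta^\D(\kappa)$ turns into $\gamma^\D_{\Psi_h}$, because the substitution carries maps $\kappa$ into $\{p_1,\dots,p_{2^h}\}$ bijectively onto maps into $\Psi_h$. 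Hence $\gamma^\D_{\Psi_h}$ follows from $\gamma^\D_{2^h}$ by substitution and propositional reasoning, so every axiom of the first system lies in $\K+\{\gamma^\D_m\}$.

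For $\K+\{\gamma^\D_m\}\subseteq L^\D$, since $L^\D=\Log(\forall x_0\E^\D(x_0))$ it suffices to verify that each $\gamma^\D_m$ is valid on every frame $\F\models\forall x_0\E^\D(x_0)$. I would argue as in the soundness half of Theorem~\ref{Theorem:Axiomatisation}: fix $\theta$ and $w_0$ and assume $\F,\theta,w_0\models\b^{\le d}(p_1\lor\dots\lor p_m)$ (otherwise the implication holds trivially). As $\F\models\E^\D(w_0)$, there are $w_1,\dots,w_n$ with $\F\models\k^\D(w_0,\dots,w_n)$, and the crucial point is that each $w_i$ is reached from $w_0$ by a directed path of length at most $d$: it is the image, under $x_i\mapsto w_i$, of the unique $\T$-path from $x_0$ to $x_i$, whose length is bounded by the depth $d$ of the spanning tree and whose edges, being $\T$-edges, are $\D$-edges and so are present among the $w_i$. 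Thus the bounded-box antecedent forces every $w_i$ with $0\le i\le n$ to satisfy some $p_{k_i}$; setting $\kappa(i)=p_{k_i}$ yields a map $\kappa:\{0,\dots,n\}\to\{p_1,\dots,p_m\}$ with $\F,\theta,w_0\models\eta^\D(\kappa)$, whence the consequent $\gamma^\D_{\{p_1,\dots,p_m\}}$ holds.

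The main obstacle is this second inclusion, and within it the observation that the depth $d$ of the spanning tree bounds the distances $\dist_\F(w_0,w_i)$; this is precisely what makes the localised antecedent $\b^{\le d}(\cdots)$ strong enough to pin each witness down to one of finitely many variables, while being vacuous under the atom-substitution of the first part. The remaining checks — that the substitution commutes with the construction of $\gamma^\D_\Psi$, and that $\k^\D(w_0,\dots,w_n)$ realises $\eta^\D(\kappa)$ — are the same bookkeeping already performed in the proof of Theorem~\ref{Theorem:Axiomatisation}.
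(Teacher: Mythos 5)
Your proof is correct and follows essentially the same route as the paper: the paper's entire justification is the remark that $\gamma^\D_{\Psi_h}$ is (up to the tautologous antecedent $\b^{\le d}\bigvee_\varepsilon \bar p^\varepsilon$) a substitution instance of $\gamma^\D_{2^h}$, which is exactly your first inclusion. Your second inclusion — soundness of each $\gamma^\D_m$ over $\C$ via the observation that the spanning-tree paths bound $\dist_\F(w_0,w_i)$ by $d$ — is left implicit in the paper but is indeed needed, and your argument for it is the correct adaptation of the soundness half of Theorem~\ref{Theorem:Axiomatisation}.
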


Intuitively, $\gamma^\D_m$ says that
`if an $d$-neighborhood of a point $w_0$ of $\F$ is coloured in
$m$ colours, then we can paint $\tilde \T$ in $m$ colors such that 
the points of $\tilde \T$ with equal labels
have equal colours and
there exists a homomorphism from 
$\tilde \T$ to $\F$ preserving the colouring and sending $x_0$ to $w_0$, where $\tilde \T$ is a reduced syntactical tree of $\eta^\D$ 
defined as follows.

\begin{defi}
Let $\phi$ be a formula built using only $\land$, $\d_\lambda$ and nominals $j_k$ with $0 \le k \le n$.
A {\sl labelled tree with a root $r$} is a tuple $\tilde \T=(W, (R_\lambda:\lambda\in\Lambda), r, f)$, where  
$(W, (R_\lambda:\lambda\in\Lambda), r)$ is a tree with a root $r$ and
$f$ (a label function)
is a map from  $W$ to $\P(\{x_0, \dots, x_n\})$. 
A {\sl reduced syntactical tree} of a formula  $\phi$ 
is a labelled  tree $\tilde \T^\phi = (W^\phi,(R^\phi_\lambda:\lambda\in\Lambda), r^\phi, f^\phi)$ 
defined by induction on the length of $\phi$.

Case 1: $\phi = j_k$, where $k \in \{0, \dots, n\}$.
Then $W^\phi$ contains a single  
point $y$. The map $f^\phi$ takes $y$ to $\{x_k\}$ and the
relations $R_\lambda^\phi$ are empty.

Case 2: $\phi = \chi \land \psi$. Then put $W^\phi = 
(W^\chi \backslash \{r^\chi\}) \cup (W^\psi\backslash \{r^\psi\}) \cup\{ r^\phi\}$,
where $r^\phi$ is a new point.
The relations $R_\lambda$ on $W^\chi$ and $W^\psi$ remain the same, and
$r^\phi R_\lambda w$ iff $w\in W_\chi$ and $r^\chi R^\chi_\lambda w$
or $w\in W^\psi$ and $r^\psi R^\psi_\lambda w$. The map $f^\phi$ sends $r^\phi$ to 
$f^\chi(r^\chi) \cup f^\psi(r^\psi)$ and is equal to $f^\chi$ or  $f^\psi$ on
all other points.

Case 3: $\phi = \d_\lambda \psi$. Then $W^\phi$ = $W^\psi\cup \{ r^\phi\}$, where $r^\phi$ 
is a new point. The $R_\mu$ for $\mu \neq \lambda$ we leave untouched, and 
to $R_\lambda$ we add an arrow, joining $r^\phi$ with $r^\psi$. We put $f(r^\phi) = \emptyset$,
and do not change $f$ in all  other points.
\end{defi}

From the definition of $\tilde \T$ and $\eta^\D$ it follows that the label function of $\tilde \T$ 
maps the points of $\tilde \T$ to
singletons, and so it can be understood as a homomorphism from $\tilde \T$ to $\D$.
The labelled tree $\tilde \T$ may be also understood as a sort of unravelling of the initial diagram $\D$.

\begin{ex}\label{ex:1}
Let $\D$ be as in the Figure~\ref{fig:2.1} on the left. The spanning tree $\T$ 
is in the middle of the Figure~\ref{fig:2.1}, and so
$\eta = j_0 \land \d (j_2 \land \d j_1) \land \d (j_1 \land \d j_2)$, the reduced 
syntactical tree of which is  in the Figure \ref{fig:2.1} on the right. Thus
the logic $L^\D$ is axiomatized by the formulas 
$$\gamma^\D_m = \b (p_1 \lor \dots \lor p_m) \to \bigvee_{i,j = 1}^m (\d (p_i\land \d p_j) \land 
\d(p_j \land \d p_i)).$$

\begin{figure}[t]
\centering
\includegraphics{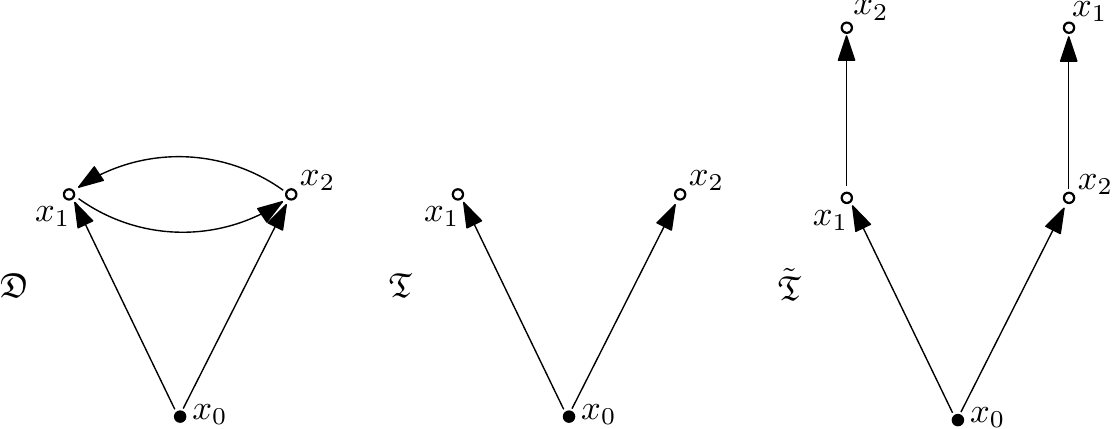}
\caption{A diagram $\D$ and its 1-unravelling $\tilde \T$.}
\label{fig:2.1}
\end{figure}
\end{ex}

\section{A property of globally minimal diagrams}

Suppose that $\D$ and $\D'$ are diagrams. A map $f: W^\D \to W^{\D'}$ is called a \emph{monotone map} from $\D$ to $\D'$, if for all $x, y \in W^\D$ and  $\lambda \in \Lambda$, 
$x R^\D_\lambda y$ implies $f(x) R^{\D'}_\lambda f(y)$. If also  $f(x^\D_0) = x^{\D'}_0$, then $f$ is called
a \emph{homomorphism}. 

\begin{lemma}\label{rank1}
For any globally minimal rooted diagram  $\D$ with an inner cycle, there exist two pointed finite Kripke frames
$\F^\D_+ = (W^\D_\pm,  ((R^\D_+)_\lambda : \lambda \in \Lambda), w_0)$ and
$\F^\D_- = (W^\D_\pm,  ((R^\D_-)_\lambda : \lambda \in \Lambda), w_0)$, points $x_d, x_{d'} \in W^\D$, 
an index $\lambda_d \in \Lambda$, and an injective homomorphism 
$g: \D \to \F^\D_+$ sending $x_0$ to $w_0$ such that:
\begin{enumerate}[(\it C-i)]
\item $\F^\D_- = \F^\D_+ - (g(x_d), g(x_d'), \lambda_d);$
\item $\F^\D_- \not\models \E^\D(w_0)$;
\item $\F^\D_+ \models \E^\D(w_0)$;
\item the points $g(x_d)$ and $g(x_{d'})$ can be connected in  $\F^\D_-$ by an undirected 
path not passing through $w_0$, all points of which belong to the image of $g$;
\item for any homomorphism $h$ from $\D$ to $\F^\D_+$ we have
\begin{equation}\label{5.1}
\mbox{the image of $h$ is } \{g(x_0), \ldots, g(x_n)\},
\end{equation} and for all $0 \le i, j \le n$ and $\lambda \in \Lambda$ 
\begin{equation}\label{5.2}
h(x_i) (R^\D_+)_\lambda h(x_j) \mbox{ implies } x_i R^\D_\lambda x_j;
\end{equation}
\item if $w \neq w_0$, then  $\F^\D_- \models \E^\D(w)$ for all  $w \in W^\D_\pm$.
\end{enumerate}
\end{lemma}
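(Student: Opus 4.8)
The plan is to obtain $\F^\D_+$ by \emph{saturating} one fixed isomorphic copy of $\D$, which will be the image of $g$. Start by letting $g$ embed $\D$ onto $n+1$ fresh points (the \emph{home copy}), equipped with exactly the edges of $\D$ and no others, with $g(x_0)=w_0$; then $g$ is an injective homomorphism for which $g(x_i)\,(R^\D_+)_\lambda\,g(x_j)$ holds precisely when $x_i R^\D_\lambda x_j$. Since $\D$ has an inner cycle, fix an edge $e=(x_d,x_{d'},\lambda_d)$ lying on it, and set $\F^\D_-:=\F^\D_+-(g(x_d),g(x_{d'}),\lambda_d)$, so that (C-i) holds by definition. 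Condition (C-iii) is then immediate, as $g(x_1),\dots,g(x_n)$ witness $\E^\D(w_0)$ in $\F^\D_+$; and (C-iv) is immediate as well, because deleting one edge from the inner cycle leaves an undirected path between $x_d$ and $x_{d'}$ avoiding $x_0$, whose $g$-image is an undirected path in $\F^\D_-$ between $g(x_d)$ and $g(x_{d'})$ that avoids $w_0$ and lies inside the home copy.

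Next I would saturate the home copy: below the non-root points I hang witnessing gadgets until every point of the carrier $W^\D_\pm$ satisfies $\E^\D$, which terminates because it suffices to supply witnesses down to the finite depth $d$ of the spanning tree and then route everything into a fixed finite frame satisfying $\forall x_0\,\E^\D(x_0)$ on its own (for instance a single point carrying an $R_\lambda$-loop for every $\lambda\in\Lambda$). The gadgets must be attached so that they create no new homomorphic image of $\D$ based at $w_0$; this is the rigidity clause (C-v), whose image statement (5.1) says that the image of every homomorphism $h\colon\D\to\F^\D_+$ is exactly the home copy, and whose (5.2) says that $h$ reflects edges. I would secure it by equipping $\F^\D_+$ with a rank/colouring invariant (refining the distance $\dist_{\F^\D_+}(w_0,\cdot)$) that is preserved under homomorphisms and separates the gadget points from the home copy, so that edge-preservation pins the whole image of $h$ inside the home copy; since the home copy carries precisely the edges of $\D$, (5.1) and (5.2) follow. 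Global minimality of $\D$ is essential here: by first-order completeness it is equivalent to the existence of a frame in which $\E^{\D-e}$ holds at every point while $\E^\D$ fails somewhere, which is exactly the profile we are engineering for $\F^\D_-$.

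Once (C-v) is in place, (C-ii) and (C-vi) are short. For (C-ii), any homomorphism $h\colon\D\to\F^\D_-$ is in particular a homomorphism into $\F^\D_+$ (which has more edges), so by (C-v) it is a bijection of $\D$ onto the home copy that reflects edges; hence $\sigma:=g^{-1}\circ h$ is a root-fixing automorphism of $\D$ and $h=g\circ\sigma$. Because $\sigma^{-1}$ is an automorphism, $\sigma^{-1}(x_d)\,R^\D_{\lambda_d}\,\sigma^{-1}(x_{d'})$ is an edge of $\D$, and $h$ sends it to the pair $(g(x_d),g(x_{d'}))$; so $h$ being a homomorphism into $\F^\D_-$ would require $g(x_d)\,(R^\D_-)_{\lambda_d}\,g(x_{d'})$, which is exactly the deleted edge — a contradiction. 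Thus $\F^\D_-\not\models\E^\D(w_0)$. For (C-vi), the deletion removes the home copy as a witness for $w_0$ only: every other point keeps the witness supplied by its own gadget, which lies in the part of the frame untouched by the deletion, and the non-root home points $g(x_1),\dots,g(x_n)$ also draw their witnesses from gadgets, since a homomorphism of $\D$ based at $g(x_i)$ with $i\ge 1$ would violate (C-v).

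The main obstacle is clearly (C-v): arranging the saturation so that the home copy is rigid — the unique target, up to the automorphisms of $\D$, of every homomorphism of $\D$ from $w_0$, and edge-reflecting — while still making all other points satisfy $\E^\D$. The danger is that a carelessly attached gadget re-routes part of $\D$ and yields an alternative, $e$-avoiding witness at $w_0$; excluding this is where the rank/colouring invariant and the global minimality of $\D$ must be combined with care, and I expect this verification to account for the bulk of the argument, the remaining clauses being essentially bookkeeping.
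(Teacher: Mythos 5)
Your architecture is the same as the paper's --- a distinguished isomorphic copy of $\D$ (the image of $g$), an edge of the inner cycle deleted to form $\F^\D_-$, a saturation of the non-root points to secure ({\it C-vi}), a reflexive cap pushed beyond the depth of $\D$, and rigidity ({\it C-v}) as the crux from which ({\it C-ii}) follows --- and your derivations of ({\it C-i}), ({\it C-iii}), ({\it C-iv}), and of ({\it C-ii}) from ({\it C-v}), are fine. But the proof of ({\it C-v}) is not actually given, and that is where essentially all the content of the lemma lies. You postulate ``a rank/colouring invariant (refining the distance) that is preserved under homomorphisms and separates the gadget points from the home copy'', but no such invariant is exhibited, and an arbitrary choice of ``witnessing gadgets'' does not admit one: a copy of $\D$ hung below a non-root home point can splice with the home copy to produce a root-based homomorphic image of $\D$ avoiding the deleted edge (this is exactly the failure mode illustrated by the paper's Figure~\ref{fig:3}). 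The paper's fix is a specific saturation --- iteratively replacing each unrepaired point $a$ by $\{a\}\times W^\D$, repeated $r$ times where $r$ is the maximal distance from $x_0$ in $\D$ --- whose point is that distances become additive, $\dist_{\F^\D_+}\bigl(w_0,(x_{i_1},\dots,x_{i_n})\bigr)=\sum_k\dist_\D(x_0,x_{i_k})$; this identity is the invariant that forces any image point with two non-root coordinates to lie strictly deeper than the rank of its preimage allows, and it is what pins $h$ down. Without constructing the gadgets so that some such quantitative statement holds, the claim that the image of $h$ lies in the home copy is unsupported.

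There is a second gap: even granting that the image of $h$ is \emph{contained} in the home copy, ({\it C-v}) does not ``follow''. Clause (\ref{5.1}) asserts that the image is \emph{all} of $\{g(x_0),\dots,g(x_n)\}$, and (\ref{5.2}) (edge reflection) is then obtained by counting: a bijection of $W^\D$ onto the home copy induces an injection, hence a surjection, of the finite set $\Arr(\D)$ onto itself. Surjectivity is exactly where global minimality enters: if $h$ misses a point $g(x_\alpha)$ of maximal rank, one shows (Claims 1--3 of the paper's proof) that no point in the image of $h$ has $x_\alpha$ as a coordinate, so $h$ factors through the saturation of $\D'=\D$ with $x_\alpha$ deleted, giving $\vdash_{FOL}\forall x_0\,\E^{\D'}(x_0)\to\forall x_0\,\E^{\D}(x_0)$ and contradicting minimality. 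Your stated use of minimality --- that by completeness it yields \emph{some} frame in which $\E^{\D-e}$ holds everywhere while $\E^\D$ fails somewhere --- is true but unusable here: that abstract frame carries none of the structure (the embedding $g$, the inner-cycle path of ({\it C-iv}), rigidity) that the pseudoproduct construction of Section~\ref{GraphPseudoproducts} requires. So the proposal correctly locates the difficulty, but leaves both the construction that resolves it and the minimality argument for surjectivity unproved.
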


({\it C-iv}) and ({\it C-v}) are technical conditions needed to prove that, for example, 
the pseudoproducts constructed in Section~\ref{GraphPseudoproducts} refute $\E^\D(x_0)$ in their roots. In fact,
({\it C-v}) says that any homomorphism $h$ from $\D$ to $\F^\D_+$ is an isomorphism between $\D$ and 
the restriction of $\F^\D_+$ to the image of $h$, and
it is easy to see that ({\it C-v}) together with ({\it C-i})  always imply ({\it C-ii}). Indeed, suppose that $\F^\D_- \models \E^\D(w_0)$. Hence, there is
a homomorphism $h$: $\D \to \F^\D_-$. But $h$ is also a homomorphism from $\D$ to $\F^\D_+$. Thus, by ({\it C-v}),
$h$ is an isomorphism between $\D$ the restriction of 
$\F^\D_+$ to $h(W^\D)$. Therefore there must be points $x_i,x_j \in \D$,
such that $h(x_i) = g(x_d)$, $h(x_j) = g(x_{d'})$, and $(x_d, x_{d'}) \in R^\D_{\lambda_d}$. 
Now we have a contradiction to the facts that $h$ is a homomorphism to $\F^\D_-$ and  
$(g(x_d), g(x_{d'})) \notin (R^\D_-)_{\lambda_d}$.

\begin{figure}[t]
\centering
\includegraphics{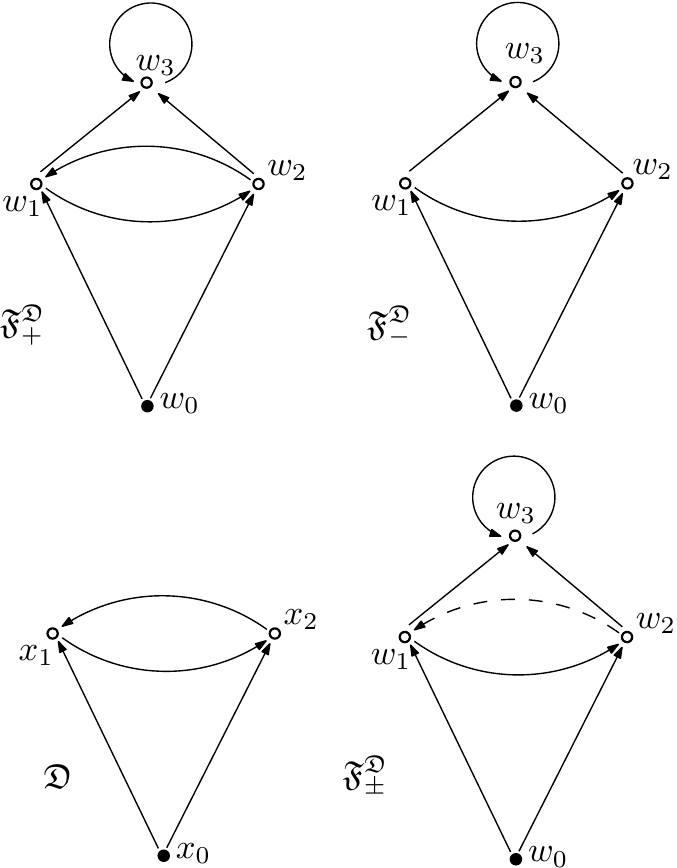}
\caption{}
\label{fig:4}
\end{figure}

\begin{ex}\label{ex:2}
Consider the diagram from Figure~\ref{fig:4}. It is easy to see that the frames $\F^\D_+$ and $\F^\D_-$ from this figure satisfy ({\it C-i}) -- ({\it C-vi}), where $d = 2,$ $d' = 1,$ and $g(x_i) = w_i$ for $i = 0, 1, 2$. 
In particular, there exist two homomorphisms from $\D$ to $\F^\D_+$: 
the first coincides with $g$, the second swaps around $x_1$ and $x_2$, and both of them satisfy ({\it C-v}). Since $\F^\D_+$ and $\F^\D_-$ have a common base set $W^\D_\pm$, 
one can think of $\F^\D_+$ and $\F^\D_-$ as a tuple $\F^\D_\pm$, consisting of the frame 
$\F^\D_+$ and a selected edge given by $d, d'$ and $\lambda_d$.
\end{ex}

In general, to satisfy conditions ({\it C-i}) -- ({\it C-v}), we can always build a spanning tree for $\D$, and take $\F^+ = \D$ and $\F^- = \F^+ - (x_d, x_{d'}, \lambda_d)$, where $(x_d, x_{d'}, \lambda_d)$ is
one of the edges of the inner cycle not belonging to the spanning tree. 
The main problem is condition ({\it C-vi}). In this example we got it at the price of a reflexive point on
top of $\D$. But in some cases 
this may break the conditions ({\it C-v}) and ({\it C-ii}), as the
next example shows,  so a more subtle construction is required.

\begin{ex}
Consider the diagram $\D$ on the left hand side of Figure~\ref{fig:3}. It is minimal. But if we choose the edge to delete 
(it could be done in a unique way without affecting connectivity; this edge is dashed in the figure in the middle), add a reflexive point and connect all points except the root to this reflexive point, then the obtained 
diagram (even after removing the selected arrow)
will satisfy $\E_\D(x_0)$ (see the frame in the middle), and that is bad. But we can amend this situation by a more elaborate construction as on the right hand side. Roughly, at first we iteratedly add new points by considering  $\E_\D(x_0)$ as a tuple-generating rule, and only after that we close the construction with a reflexive point.
\end{ex}

\begin{figure}[t]
\centering
\scalebox{1.2}{\includegraphics{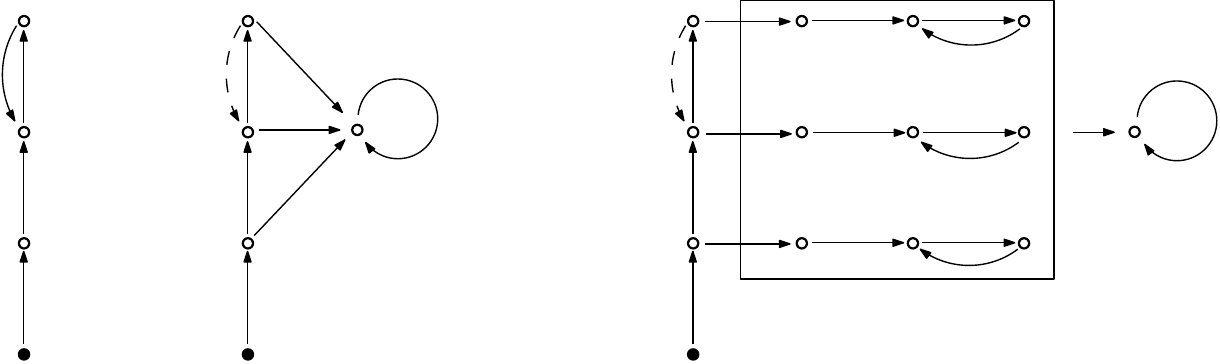}}
\caption{}
\label{fig:3}
\end{figure}
\suppressfloats

\begin{proof}[The proof of Lemma~\ref{rank1}]
Given a rooted diagram $\D = (W^\D, (R^\D_\lambda : \lambda \in\Lambda), x_0)$, we define a sequence
of tuples $(\F^\D_i, A^\D_i)$ where $\F^\D_i = (W^\D_i, R^\D_{i,\lambda}, x^\D_{i,0})$ is a Kripke frame and
$A^\D_i \subseteq W^\D_i$. 
Intuitively, $A_i$ denotes the set of  those points $w$ in $\F_i$ which
may falsify ({\it C-vi}) and so have to be ``repaired'', which results in $\F_{i+1}$.
Formally, we set  $\F^\D_1 = \D$ and $A^\D_1 = W^\D \setminus \{x_0\}$, $x^\D_{1,0} = x_0$. 
Now given $(\F^\D_i, A^\D_i)$, we define
$(\F^\D_{i+1}, A^\D_{i+1})$ as follows. Let
$$W^\D_{i+1} = ((W^\D_i \setminus A_i) \times \{x_0\}) \cup (A_i \times W^\D),$$
$$R^\D_{i+1,\lambda} = \{((a,x_0), (b,x_0)) \mid (a,b) \in R^\D_i \} \cup \{((a,b), (a,c)) \mid a \in A_i,\, b,c \in W^\D \mbox{ and } 
b R^\D_\lambda c\},$$
$$A^\D_{i+1} = A^\D_i \times (W^\D \setminus \{x_0\}) \mbox{ and } x^\D_{i+1,0} = x^\D_{i,0} \times \{x_0\}.$$

Let $r$ be the maximal distance from $x_0$ to any point $x_i$ of $\D$.
To obtain $\F^\D_+$, we take $\F^\D_r$, add a new all-$\lambda$-reflexive point $\circ$, and join   
all points $y$ of $A_r$ to $\circ$  by all $R_\lambda$. 
Now we set $g(x_i) = (x_i, x_0, \ldots, x_0)$ and  
$\F^\D_- = \F^\D_+ - (g(x_d), g(x_{d'}), \lambda_d)$, where $(x_d, x_{d'}, \lambda_d)$ is an arbitrary chosen edge  of the inner cycle that does not belong to some spanning tree for $\D$.
Denote $\Delta = \{g(x_0), \ldots, g(x_n)\} = 
W^\D \times \{x_0\}\times \ldots \times \{x_0\}$. Thus,  $g$ is a bijection between
$W^\D$ and $\Delta$.

We claim that the conditions ({\it C-i})--({\it C-vi}) are satisfied. 
It is clear that the construction guarantees the validity of conditions ({\it C-i}), ({\it C-iii}) 
and ({\it C-vi}). 
Condition ({\it C-iv}) is true since 
$(x_d, x_{d'}, \lambda_d)$ belongs to the inner cycle.

Let us prove ({\it C-v}). Suppose that there exists a homomorphism $h$ from $\D$ to  $\F^\D_+$.
It is clear that $h(x_i) \neq \circ $ for all $i$, 
because $r$ was chosen large enough and so 
the new reflexive point $\circ$ is too far from $w_0$.
Let $\Gamma = \Delta \cap \{ h(x_0), h(x_1) , \ldots, h(x_n)\}$. 
Suppose that $\Delta \setminus \Gamma$ is not empty.

For $x_i \in W^\D$ let $\rank(x_i)$ denote the distance from $x_0$ to $x_i$ in $D$. 
Let $\Del(x_i)$  be the set of all $x_j \in W^\D$, $x_j \neq x_i$ such that 
all paths from $x_0$ to $x_j$ pass through $x_i$. Informally speaking, if we delete $x_i$, then
$\Del(x_i)$ is the set of all points that cease to be visible from $x_0$.

Now let $x_\alpha$ be a point of $W^\D$ of maximal rank such that 
$g(x_\alpha) \in \Delta  \setminus\Gamma$.  Let $\D'$ be obtained from $\D$ by deleting 
$x_\alpha$ together with all adjacent edges.
The following claims show that $\D'$ 
is rooted and that the image of $h$ belongs to 
$\F^{\D'}_r$ considered as a part of $\F^\D_r$.

\emph{Claim 1.} $\Del(x_\alpha) = \emptyset$. Otherwise,
take a point $x_{\alpha'}$ of $\Del(x_\alpha)$.
From the definition of $\Del(x_\alpha)$ and $\F^\D_r$ it follows that all paths in $\F^\D_+$ leading from $g(x_0)$ to 
$g(x_{\alpha'})$ pass through $g(x_\alpha)$. Let us prove that $g(x_{\alpha'}) \notin \Gamma$. 
Suppose that  $g(x_{\alpha'}) \in \Gamma$, that is $g(x_{\alpha'}) = h(x_{\alpha''})$ for some $\alpha''$. 
Since $\D$ is rooted,  there exists a path in $\D$ from $x_0$ to $x_{\alpha''}$. The image of this path under the map $h$ is a path connecting $g(x_0)$ to $g(x_{\alpha'})$ in $\F^\D_+$. But this path must pass through $g(x_\alpha)$. That contradicts $g(x_\alpha) \notin \Gamma$.
Hence, $g(x_{\alpha'}) \notin \Gamma$. But $\rank(x_{\alpha'}) > \rank (x_\alpha)$. This contradicts 
the maximality of the rank of $x_\alpha$ in $\Delta \setminus \Gamma$. 

\emph{Claim 2.} If $h(x_\alpha) = (x_{i_1}, \ldots, x_{i_n})$, then 
 $x_{i_k} \neq x_\alpha$ for all $1 \le k \le n$. First, consider the case when $h(x_\alpha) \in \Delta$.
Then there is nothing to prove, because in this case $i_k = 0$ for $2 \le k \le n$, and $i_1 \neq \alpha$ 
since $g(x_\alpha) \notin \Gamma$. So, suppose that 
$h(x_\alpha) \notin \Delta$. This means that 
\begin{equation}\label{ge2}
|\{k \mid x_{i_k} \neq x_0 \} | \ge 2.
\end{equation}
Let us prove that  for all 
$1 \le k \le n$ we have $\rank(x_{i_k}) < \rank(x_\alpha)$.  
To this end, note that the distance in $\F^\D_+$ satisfies 
$$\dist_{\F^\D_+} ((x_0, \ldots, x_0),( x_{i_1}, \ldots, x_{i_n})) = 
\dist_{\D}(x_0, x_{i_1}) + \ldots + \dist_{\D}(x_0, x_{i_n}).$$
Now, suppose that for some $k$ we have $\rank(x_{i_k}) \ge \rank(x_\alpha)$. Then taking into account (\ref{ge2})
we obtain that  
$$\dist_{\F^\D_+}(w_0, h(x_\alpha)) = 
\dist_{\F^\D_+} ((x_0, \ldots, x_0),( x_{i_1}, \ldots, x_{i_n})) > 
\rank(x_\alpha),$$
a contradiction. Hence, $\rank(x_{i_k}) < \rank(x_\alpha)$ for all $1 \le k \le n$, and, in particular, 
$x_{i_k} \neq x_\alpha$.

\emph{Claim 3.} For any $\beta \in \{1, \dots, n\}$, if $h(x_\beta) = ( x_{i_1}, \ldots, x_{i_n})$, then 
$x_{i_k} \neq x_\alpha$ for all $1 \le k \le n$. Indeed, if $\rank(x_\beta) \le \rank(x_\alpha)$, then a similar 
argument works. Now, suppose that $\rank(x_\beta) > \rank(x_\alpha)$. Let $B \subseteq W^\D$ be the set of
all points of rank greater than $\rank(x_\alpha)$. But as $x_\alpha$ is supposed to be an element of 
$\Delta\setminus\Gamma$ of maximal rank, from the definition of $B$ it follows 
that $h(B) \subseteq \Delta$, and, in particular,
for all $k>2$ $x_{i_k} = x_0 \neq x_\alpha$. 

Now, let the diagram $\D'$ be obtained from $\D$ by deleting $x_\alpha$ together with all adjacent edges. 
Then  $\vdash_{FOL} \forall x_0 \E^{\D'}(x_0) \to \forall x_0 \E^{\F^{\D'}_r} (x_0)$ and
$\vdash_{FOL} \forall x_0 \E^{\F^{\D'}_r} (x_0) \to \forall x_0 \E^{\D} (x_0)$. Thus 
$\vdash_{FOL} \forall x_0 \E^{\D'}(x_0) \to \forall x_0 \E^{\D} (x_0)$, and this
contradicts the  global minimality of $\D$. 

We have just proved (\ref{5.1}) of ({\it C-v}).  To prove (\ref{5.2}), take a homomorphism $h$ from
$\D$ to $\F_\D^+$. Via identification the $\D$ with a copy of itself sitting inside $\F_\D^+$ given 
by the image of $g$, we see that the map $h$ acts on the set 
$\Arr(\D) = \{(x_i, x_j, \lambda) \mid x_i, x_j \in W^\D, \lambda \in \Lambda, (x_i,x_j) \in R^\D_\lambda\}$, 
sending $(x_i, x_j, \lambda)$ to
$(h(x_i), h(x_j), \lambda)$ which is also in  $\Arr(\D)$.  From (\ref{5.1}) it follows that 
$h$ is injective on $\Arr(\D)$. Therefore, since $\Arr(\D)$ is finite,  
$h$ is surjective on $\Arr(\D)$, and so satisfies  (\ref{5.2}) of ({\it C-v}).

Another proof of  (\ref{5.2}) of ({\it C-v}): take a homomorphism $h$ from
$\D$ to $\F_\D^+$ and assume that $h(x_i) (R^\D_+)_\lambda h(x_j)$ holds 
while $x_i R^\D_\lambda x_j$ does not. It follows that the edge 
$g^{-1}(h(x_i)) R^\D_\lambda  g^{-1}(h(x_i))$ (which is well defined because of 
(\ref{5.1})) is superfluous in $\D$.

Condition ({\it C-ii}) is a consequence of  ({\it C-v}) and ({\it C-i}).
\end{proof}

\section{Pseudoproducts with graphs} \label{GraphPseudoproducts}

By a \emph{graph} we understand a tuple $G = (V, E)$, where $E$ is a symmetric binary relation on $V$.
To emphasise symmetricity of $E$, instead of $(v_1, v_2) \in E$ we sometimes write $\{v_1, v_2\} \in E$.
 For a ordinal $\alpha \le \omega$ an $\alpha$-colouring of a graph $G$ is
a map $\tau: V \to \alpha$, such that every two adjacent vertices are mapped to different elements of 
$\alpha$. The elements of $\alpha$ in this context are called \emph{colours}.
In general, below we do not suppose $E$ to be irreflexive, thus $G$ may contain edges of the form
$(v,v)$, which are called \emph{loops}. 
However, it is clear that any graph that contains loops does not have colourings at all, and so  
in Lemma~\ref{colouring}, (C2) we implicitly assume that $G$ does not have loops.

We fix a minimal diagram $\D$ with an inner cycle. 
Let $\F^\D_+= (W^\D_\pm, ((R^\D_+)_\lambda:\lambda \in \Lambda), w_0)$ and $\F^\D_- = (W^\D_\pm, ((R^\D_-)_\lambda:\lambda \in \Lambda), w_0)$
together with $d$, $d'$, $\lambda_d$ and $g$ satisfy conditions ({\it C-i}) -- ({\it C-vi})  of Lemma \ref{rank1}. 
Let $G = (V, E)$ be an arbitrary graph.
By $\F^\D_{\pm}\times G$ denote\footnote{We chose this notation for pseudoproducts because they somehow combine the features
of $\F^\D_+$ and $\F^\D_-$. You may think of $\F^\D_\pm$ as a shorthand for a tuple $(\F^\D_+, \F^\D_-)$ with intuition from Example~\ref{ex:2}.} the Kripke frame $(W^{\D,G}, (R^{\D,G}_\lambda : \lambda \in \Lambda))$ where
$W^{\D,G} = \{w_0\} \cup (W^\D_\pm \setminus \{w_0\}) \times V$ and

\begin{equation*}
R_\lambda^{\D,G} = \{(w_0,w_0)\mid \F^\D_- \models w_0 R_\lambda w_0; \} \cup 
\end{equation*}
\begin{equation*}
\{(w_0,(y,v))\mid \F^\D_- \models w_0 R_\lambda y; y\in W^\D_\pm\setminus\{w_0\}; v\in V\} \cup 
\end{equation*}
\begin{equation*}
     \{((y,v),w_0)\mid \F^\D_- \models y R_\lambda w_0; y \in W^\D_\pm\setminus\{w_0\}; v\in V \} \cup 
\end{equation*}
\begin{equation*}
     \{((x,v),(y,v))\mid \F^\D_- \models x R_\lambda y; x,y \in  W^\D_\pm \setminus\{w_0\}; v\in V; \} \cup
\end{equation*}
\begin{equation*}
   \{((g(x_d),v_1),(g(x_{d'}),v_2))\mid v_1\in V; v_2 \in V;  \{v_1,v_2\} \in E, \lambda = \lambda_d\}.
\end{equation*} 

An example of this construction for $\D$ and $\F^\D_\pm$ from Example~\ref{ex:2} is given in Figure~\ref{fig:7}.
\begin{figure}[t]
\centering
\includegraphics{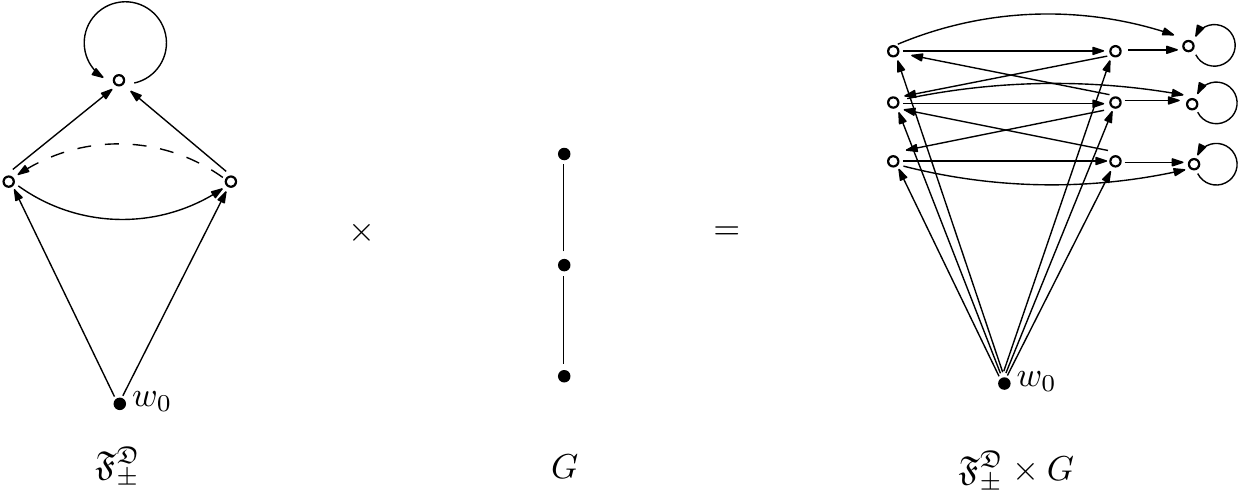}
\caption{A pseudoproduct.}
\label{fig:7}
\end{figure}
This construction has a simpler description in terms of projection functions. If $\pr$ denotes the projection
from $\F^\D_\pm \times G$ to $\F^\D_+$, given by formulas $\pr((x,v)) = x$, $\pr(w_0) = w_0$, and $h$ denotes
the projection from $\F^\D_\pm \times G$ to $V'$, where 
$V'=V \cup \{\bot\}$, given by formulas $h((x,v)) = v$, 
$h(w_0) = \bot$, then the $R^{\D,G}_\lambda$ satisfy the following condition for all $\eta, \chi \in W^{\D, G}$: 
$\F^\D_\pm \times G \models \eta R_\lambda \chi$ iff
$$
\begin{array}{c}
\F^\D_-\models \pr(\eta)R_\lambda \pr(\chi) \mbox{ and } 
(h(\eta) = h(\chi) \mbox{ or } \bot \in\{h(\eta), h(\chi)\} )\\
\mbox{ or  }\\
\F^\D_-\not\models \pr(\eta)R_\lambda \pr(\chi), \F^\D_+\models \pr(\eta)R_\lambda pr(\chi) \mbox{ and } G \models h(\eta) E h(\chi).
\end{array}
$$

Recall that $L^\D$
is axiomatized by formulas $\gamma^\D_n$ of Section~\ref{section:axiomatization}
saying ``if an d-neighborhood of a point $w_0$ of $\F$ is painted in
$m$ colours, then we can paint the tree $\tilde \T = (W^\Tt, (R^\Tt_\lambda : \lambda \in \Lambda), x_0)$ (defined
in Section~\ref{section:axiomatization}) in $m$ colors such that the points of $\tilde \T$ with equal labels
have equal colours and there exists a homomorphism from 
$\tilde \T$ to $\F$ sending $x_0$ to $w_0$ and preserving the colouring''. 
We understand $f^\Tt$ as a homomorphism
from $\Tt$ to $\D$. 

The next lemma shows the connection between the chromatic number of $G$ and the least $k$ for which 
$\gamma^\D_k$ can be refuted in $\F^\D_\pm \times G$. 
Simply put, it says that if one of these numbers is big, 
then the other is big as well.

\begin{lemma}\label{colouring}
Suppose that $|W^\D_\pm| = b$ and that $G=(V,E)$. Then
\begin{enumerate}[({C}1)]
\item If $G$ cannot be painted in $2^{bk}$ colours, then for all $k$-generated valuations $\theta$ and for all $m$ we have $\F^\D_\pm \times G, \theta \models \gamma^\D_m$. In particular, $\F^\D_\pm \times G \models \gamma^\D_k$.
\item If $G$ can be painted in $N$ colours, then $\F^\D_\pm \times G \not \models \gamma^\D_{N(b-1)+1}$.
\end{enumerate}
\end{lemma}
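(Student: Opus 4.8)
The plan is to read both (C1) and (C2) through the semantics of the axioms $\gamma^\D_m$ fixed in Section~\ref{section:axiomatization}: at a point $w$ the formula $\gamma^\D_m$ holds iff either the antecedent $\b^{\le d}(p_1\lor\dots\lor p_m)$ fails at $w$, or there is a \emph{coloured homomorphism} of the reduced syntactic tree $\Tt$ into $\F^\D_\pm\times G$ rooted at $w$ — that is, a map $\kappa\colon\{0,\dots,n\}\to\{p_1,\dots,p_m\}$ together with a tree homomorphism $f\colon\Tt\to\F^\D_\pm\times G$ sending the root to $w$ with $f(s)$ satisfying $\kappa(\text{label}(s))$ at every node $s$. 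The point I would exploit throughout is that $\Tt$ is a \emph{tree}: the two ends of the deleted edge $(g(x_d),g(x_{d'}),\lambda_d)$ sit at two distinct nodes of $\Tt$ (the leaf created by the $\chi$-diamond versus the backbone occurrence), so a coloured homomorphism may send them to different points of the pseudoproduct as long as those points carry the same colour.

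For (C1): given a $k$-generated $\theta$ I assign to each vertex $v\in V$ its \emph{layer-type}, recording for each of the $b-1$ points $y\in W^\D_\pm\setminus\{w_0\}$ which of the $\le k$ active variables $(y,v)$ satisfies; there are at most $2^{k(b-1)}\le 2^{bk}$ of these. If $G$ is not $2^{bk}$-colourable this is an improper colouring, so there are adjacent $v_1,v_2\in E$ of equal layer-type. Now fix $w$ and assume the antecedent holds at $w$ (else we are done). If $w=(y,v)$ with $y\neq w_0$, condition ({\it C-vi}) gives a homomorphism $\D\to\F^\D_-$ from $y$, which lifts inside layer $v$ (routing through $w_0$ by the $\bot$-clause) to a homomorphism $\D\to\F^\D_\pm\times G$ from $w$. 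If $w=w_0$, I send the whole backbone and all non-deleted $\chi$-leaves into layer $v_1$ along $g$, and the single $\chi$-leaf of the deleted edge to $(g(x_{d'}),v_2)$ across the $G$-edge $\{v_1,v_2\}$; the leaf gets the correct colour precisely because $v_1,v_2$ have equal layer-type, so $(g(x_{d'}),v_1)$ and $(g(x_{d'}),v_2)$ satisfy the same variables. In either case every image point lies within distance $\le d$ of $w$ (the realising path is no longer than the depth of $\Tt$), so the antecedent supplies a variable for each and defines $\kappa$, yielding $\eta^\D(\kappa)$ at $w$. Validity of $\gamma^\D_k$ then follows since any valuation is $k$-generated on $\{p_1,\dots,p_k\}$.

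For (C2): fix a proper $N$-colouring $c\colon V\to\{1,\dots,N\}$ and colour $\F^\D_\pm\times G$ by the $N(b-1)+1$ variables $q_{y,s}$ (for $y\neq w_0$, $s\le N$) and $q_*$, putting $(y,v)$ into $q_{y,c(v)}$ and $w_0$ into $q_*$; the antecedent of $\gamma^\D_{N(b-1)+1}$ is vacuous, so I must refute the conclusion at $w_0$. Suppose a coloured homomorphism $f$ exists. A colour pins down a point's $\pr$-coordinate, so $\bar f(x_i):=\pr(f(s))$ is well defined on labels, and since $\pr$ is a homomorphism onto $\F^\D_+$, $\bar f\colon\D\to\F^\D_+$ is a homomorphism; by ({\it C-v}) it is a bijection onto $\Delta=\{g(x_0),\dots,g(x_n)\}$ reflecting edges, with $\bar f(x_0)=w_0$. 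Were no cross-layer edge used, every realised edge would be an edge of $\F^\D_-$ between $\pr$-coordinates, making $\bar f$ a homomorphism $\D\to\F^\D_-$ and contradicting ({\it C-ii}); hence some cross-layer edge occurs, and by the pseudoproduct definition it is a copy of the deleted edge joining $(g(x_d),v_1)$ to $(g(x_{d'}),v_2)$ with $\{v_1,v_2\}\in E$, so $c(v_1)\neq c(v_2)$. Writing $\Gamma(x_i)$ for the $c$-value of the layer of $f$ on any node labelled $x_i$ (well defined from the colour when $x_i\neq x_0$), the ends $x_a,x_b$ of this edge, with $\bar f(x_a)=g(x_d)$ and $\bar f(x_b)=g(x_{d'})$, satisfy $\Gamma(x_a)\neq\Gamma(x_b)$.

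The final step — and where I expect the genuine work — is to contradict this. I would transport the undirected path of ({\it C-iv}) from $g(x_d)$ to $g(x_{d'})$ (lying in $\F^\D_-$, avoiding $w_0$, inside $\Delta$) back through the bijection $\bar f^{-1}$ to a path $x_a=y_0,\dots,y_t=x_b$ in $\D$ whose edges all map under $\bar f$ to edges of $\F^\D_-$. Each such edge is realised by $f$ between two non-$w_0$ $\pr$-coordinates joined by an $\F^\D_-$-edge, and the pseudoproduct definition forces any such realisation to stay inside one layer; hence $\Gamma$ is constant along the path and $\Gamma(x_a)=\Gamma(x_b)$, the required contradiction. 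The subtlety to watch is exactly that $\bar f$ need not be $g$ but only $g$ composed with an automorphism of $\D$ (Example~\ref{ex:2} shows nontrivial automorphisms occur), so one must reason about the edge that is \emph{actually} realised across layers rather than about $(g(x_d),g(x_{d'}))$ literally; transporting ({\it C-iv}) by $\bar f^{-1}$ rather than by $g^{-1}$ is what makes the argument insensitive to this automorphism.
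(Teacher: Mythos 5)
Your proof is correct and follows essentially the same route as the paper: for (C1) you find two adjacent vertices of $G$ with the same ``layer-type'' and use the cross-layer copy of the deleted edge to realise $\eta^\D$ at $w_0$ (the paper packages this as a bisimulation with an auxiliary frame rather than a direct construction of the witnessing homomorphism of $\Tt$, but the idea is the same), and for (C2) you use the same colouring valuation, the same induced homomorphism $\D\to\F^\D_+$, and the same chain of colour equalities along the path from ({\it C-iv}) pulled back through ({\it C-v}). The only substantive variation is that you locate the forced cross-layer edge via ({\it C-ii}) rather than via edge-reflection applied to $g(x_d)\,R_{\lambda_d}\,g(x_{d'})$; both work.
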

\begin{proof}
(C1) Condition ({\it C-vi}) of Lemma~\ref{rank1} and  
the soundness part of Theorem~\ref{Theorem:Axiomatisation}
guarantee that $\F^\D_\pm \times G, x \models \gamma^\D_m$ for all $x \in W^{\D,G}$ different from $w_0$. 
We show that $\F^\D_\pm \times G, \theta, w_0 \models \gamma^\D_m$ as well, if $\theta$ is $k$-generated.
Since a formula $\gamma^\D_m$ is invariant under transpositions of variables that swap $p_i$ and $p_j$, without
any loss of generality we may assume that $\theta(p_i) = \emptyset$ for $i > k$.
Define the map $\tau : V \to (\P(\{1, \dots, k\}))^{(b-1)}$ by putting $\tau(v)$  for $v\in V$ to be the map
from $W^\D_\pm \setminus \{w_0\}$ to $\P(\{1, \dots, k\})$ defined by
$$i \in \tau(v) (y) \mbox { iff }  (y,v) \in \theta(p_i) \quad \mbox { for } 1\le i \le k. $$
Since $G$ cannot be painted in $2^{bk}$ colours, there exist $v_1, v_2 \in V$ such that $\{v_1, v_2\} \in E$ and
$\tau(v_1) = \tau(v_2)$. Consider the Kripke frame
$(\F^\D_\pm \times G)^\dagger = ((W^{\D,G})^\dagger, (R^{\D,G}_\lambda)^\dagger, w_0)$, 
where $(W^{\D,G})^\dagger = W^{\D,G} \cup W^\D_\pm$ 
(recall that $W^\D_\pm \cap W^{\D,G} = \{w_0\}$) and 
$(R^{\D,G}_\lambda)^\dagger = R^{\D,G}_\lambda \cup (R^\D_+)_\lambda$.
Then we set $$\theta^\dagger(p_i) = \theta(p_i) \cup 
\{y \in W^\D_\pm \mid (y,v_1) \in \theta(p_i)\}$$
for all $1 \le i \le k$.
We claim that $((\F^\D_\pm \times G)^\dagger, \theta^\dagger), w_0$ and  
$(\F^\D_\pm \times G, \theta), w_0$ are bisimilar. Indeed, the relation 
$Z = \{(z,z) \mid z \in W^{\D,G}\} \cup \{(y,(y,v_1)) \mid  y \in W^\D_\pm \setminus \{w_0\}\} \cup
\{(y,(y,v_2)) \mid  y \in W^\D_\pm \setminus \{w_0\}\}$ constitutes a bisimulation.
Since 
$(\F^\D_\pm \times G)^\dagger, \theta^\dagger,  w_0 \models \gamma^\D_m$ 
(Theorem~\ref{Theorem:Axiomatisation}, Soundness), 
we conclude that also $\F^\D_\pm \times G, \theta, w_0 \models \gamma^\D_m$.

(C2) Let $\tau: V \to \{1, \dots, N\}$ be a colouring of $G$. Suppose that the variables of 
$\gamma^\D_{N(b-1)+1}$ are 
indexed as $p_0$ and $p_i^c$ where  $1 \le i \le b - 1,$ $1 \le c \le N$. Consider the 
following valuation $\theta$ on 
$\F^\D_{\pm} \times G$:
$$
\theta(p) = \begin{cases}
\{x_0\},   & \mbox{ if } p = p_0,\\
\{(x_i, v) \mid \tau(v) = c\},& \mbox{ if } p = p_i^c.
\end{cases}
$$
The definition of $\theta$ gives rise to the  map $\theta_* : \F^\D_\pm \times G \to \{0,1,\dots, N(b-1)\}$ defined
by equations $\theta_*(w_0) = 0$; $\theta_*((y,v)) = $ the number of $p_i^{\tau(v)}$ among $\{1, \dots, N(b-1)\}$.

Recall that there is a natural projection $\pr: \F^\D_{\pm} \times G \to \F^\D_+$, defined by 
$$\pr(w_0) = w_0;$$
$$\pr(x_i, v) = x_i \text{ for all } v \in V.$$

Besides $\pr$, there is a projection $f^\Tt : \Tt \to \D$.\hspace{-1mm}
We say that a map $\mathfrak{b}\hspace{-0.5mm} :\hspace{-0.5mm} W^\Tt \to \{0,1, \ldots, N(b-1)\}$ \emph{respects $f^\Tt$} if for all 
$x, y \in W^\Tt$, $f^\Tt(x) = f^\Tt(y)$ implies $\mathfrak{b}(x) = \mathfrak{b}(y)$.

To prove that $\F^\D_\pm \times G, \theta \not \models \gamma^\D_{N(b-1)+1}$ it is sufficient to prove that there is no homomorphism 
$\a : \Tt \to \F^\D_\pm \times G$, such that $\theta_*(\a(t)) : \Tt \to \{0, 1, \dots, N(b-1)\}$ respects $f^\Tt$.

\begin{figure}[t]
\centering
\begin{tikzpicture}
\node(D) at (0,0) [label = below : ${z_1,z_2,\dots,z_s}$]{$\D$};
\node(T) at (0,3)  {$\Tt$};
\node(FG) at (3,3) {$\F^\D_\pm\times G$};
\node(F) at (3,0) [label = below : ${y_1,y_2,\dots,y_s}$] {$\F^\D_+$};
\node(C) at (3,6) {$\{0,1, \dots, N(b-1)\}$};
\draw[->] (T)  to node [above]{$\mathfrak{a}$} (FG);
\draw[->] (D)  to node [above]{$h$} (F);
\draw[->] (T)  to node [left]{$f^T$} (D);
\draw[->] (FG)  to node [right]{$pr$} (F);
\draw[->] (FG)  to node [right]{$\theta_*$} (C);
\draw[->] (T)  to node [above left]{$\mathfrak{b}$} (C);
\end{tikzpicture}
\caption{}
\label{fig:2}
\end{figure}

For the sake of contradiction, assume that such $\a$ exists. Consider the following map $h : W^\D \to W^\D_\pm$. 
To define $h(x)$ for  $x \in W^\D$, we take any $t \in (f^\Tt)^{-1}(x)$ and set $h(x) = \pr(\a(t))$ 
(see Figure~\ref{fig:2}). It is clear that $h$
is well defined, i.e., it does not depend on the choice of $t$, since if $t_1, t_2 \in (f^\Tt)^{-1}(x)$, then
$f^\Tt(t_1) = f^\Tt(t_2)$, and this means that $\pr(\a(t_1)) = \pr(\a(t_2))$ due to the definition of $\theta$.
Clearly, $h$ is a homomorphism from $\D$  to $\F^\D_+$, and it makes the diagram in Figure~\ref{fig:2} commutative.  Now we apply ({\it C-v}) of Lemma \ref{rank1} and conclude that  the image of  $h$ is $\{g(x_0), g(x_1), \ldots, g(x_n)\}$.

Then we apply  ({\it C-iv}). Let $y_1 R_{\lambda_1} y_2 \ldots   R_{\lambda_{n-1}} y_s$, 
where $y_i \in W^\D_\pm$ for $1 \le i \le s$ and $\lambda_i \in \Lambda^\pm$ for $1\le i < s$, 
be the path  connecting 
$x_{d}$ with $x_{d'}$ in $\F^\D_-$ (in particular, $y_1 = x_d$ and 
$y_s = x_{d'}$). Let $z_1, \dots, z_s \in W^\D$ be the points such that 
$h(z_i) = y_i$ for $1\le i \le s$. By ({\it C-v}), (\ref{5.2}), $\D \models z_i R_{\lambda_i} z_{i+1}$ for $1\le i < s$. The map $f^\Tt$ satisfies the following condition: if $\D \models w_1 R_\lambda w_2$ for some 
$w_1, w_2 \in W^\D$, then there exist points $w_1', w_2' \in W^\Tt$ such that 
$\Tt \models w_1' R_\lambda w_2'$ and  $w'_i \in (f^\Tt)^{-1}(w_i)$ for $i \in \{1,2\}$. 
We apply this statement $s-1$ times for $z_i$, $R_{\lambda_i}$ and $z_{i+1}$, and conclude that
there exist
points $t_i \in (f^\Tt)^{-1}(z_i)$ and $t'_j \in (f^\Tt)^{-1}(z_j)$ 
for $1 \le i < s $ and $ 1 < j \le s$ such that $\Tt \models t_i R_{\lambda_i} t'_{i+1}$ for 
$1 \le i < s$.  Note that by the definition of $t_i$ and $t_i'$ we have $\pr(\a(t_i)) = \pr(\a(t'_i)) = y_i$.

Thus, let $v_i \in V$ and $v'_j \in V$ 
for $1 \le i < s $ and $ 1 < j \le s$ be such that
$\a(t_i) = (y_i, v_i)$ and $\a(t'_j) = (y_i, v'_j)$. Let us show that
\begin{enumerate}[(a)]
\item if $ 1\le i < s$ then  $\tau(v_i) = \tau(v'_{i+1})$ and
\item if $ 1< i < s$ then  $\tau(v_i) = \tau(v'_{i})$.
\end{enumerate}

(a): Since $\Tt \models t_i R_{\lambda_i} t_{i+1}$, due to the definition of 
$R^{\D,G}$ and the facts that $(f^\Tt(t_i), f^\Tt(t_{i+1}))\in (R^\D_-)_{\lambda_i}$ and 
$(g(x_d), g(x_{d'})) \notin (R^\D_-)_{\lambda_d}$, 
it follows that $v_i = v'_{i+1}$, and so $\tau(v_i) = \tau(v'_{i+1})$.

(b): From $t_i, t'_i \in (f^\Tt)^{-1}(z_i)$ and the fact that $\theta(\a(t))$ respects $f^\Tt$, 
it follows that 
$\theta_*(\a(t_i)) = \theta_*(\a(t'_i))$. Therefore 
$\theta_*((y_i, v_i)) = \theta_*((y_i, v'_i))$, and so  $\tau(v_i) = \tau(v'_i)$.

Together, (a) and (b) give us that $\tau(v_1) = \tau(v_s')$. 
Since $F^\D_+ \models g(x_d) R_{\lambda_d} g(x_d')$, ({\it C-v})  implies that
$\D \models  z_1 R_{\lambda_d} z_s$, and so there exist $t'_1 \in (f^\Tt)^{-1}(z_1)$ and
$t_s \in (f^\Tt)^{-1}(z_s)$ such that $\Tt \models t'_1 R_{\lambda_d} t_s$. Let $v'_1$ and $v_s$ be such that
$\a(t'_1) = (y_1, v'_1)$ and $\a(t_s) = (y_s, v_s)$.
Arguing like in (b), one can show that $\tau(v_s) = \tau(v'_s)$ and that $\tau(v_1) = \tau(v'_1)$. But 
$\Tt \models t'_1 R_{\lambda_d} t_s$ together with  the facts that $\a$ is a  homomorphism
 and that $\tau$ is a colouring of $G$ imply that $\tau(v_1) \neq \tau(v'_s)$
(recall that $y_1=x_d$ and  $y_s = x_d'$), a contradiction.
\end{proof}

\section{Pseudoproducts with complete graphs}

Fix a diagram $\D$. For an ordinal $\alpha $ let 
$K_\alpha $ denote the complete graph with $\alpha$ vertices.

\begin{lemma}\label{complete-1}
For any $\alpha$ $\F^\D_\pm \times K_\alpha  \not \models \e^\D(w_0)$.
\end{lemma}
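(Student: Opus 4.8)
The plan is to unfold $\F^\D_\pm \times K_\alpha \not\models \e^\D(w_0)$ into the non-existence of a homomorphism and argue by contradiction. Recall from Section~\ref{section:axiomatization} that $\F \models \e^\D(w)$ holds exactly when there is a map $h \colon \D \to \F$ with $h(x_0)=w$ preserving all edges of $\D$. So I would assume, for contradiction, that there is a homomorphism $h \colon \D \to \F^\D_\pm \times K_\alpha$ with $h(x_0)=w_0$, and I write $\ell$ for the layer projection (denoted $h$ in Section~\ref{GraphPseudoproducts}), so $\ell(w_0)=\bot$ and $\ell((x,v))=v$. The first step is to observe that $\pr \circ h$ is a homomorphism from $\D$ into $\F^\D_+$: whenever $x_i R^\D_\lambda x_j$, the edge $h(x_i) R^{\D,G}_\lambda h(x_j)$ falls under one of the two disjuncts of the projection description of $R^{\D,G}_\lambda$, and in both cases $\F^\D_+ \models \pr(h(x_i)) R_\lambda \pr(h(x_j))$ (using ({\it C-i}) for the first disjunct); moreover $\pr(h(x_0))=\pr(w_0)=w_0=g(x_0)$.

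Next I would invoke ({\it C-v}). It tells me that the image of $\pr \circ h$ is all of $\{g(x_0),\dots,g(x_n)\}$, so $\pr\circ h$ is a surjection between two $(n{+}1)$-element sets, hence a bijection; together with (\ref{5.2}), and the same property applied to $g$ itself (which makes $g$ an isomorphism onto its image), this shows that $\sigma := g^{-1}\circ \pr\circ h$ is an automorphism of $\D$ fixing $x_0$, with $\pr(h(x_i))=g(\sigma(x_i))$ for all $i$. I then set $L(x_i) := \ell(h(x_i)) \in V \cup \{\bot\}$ and record the dichotomy coming from the projection description: for an edge $x_i R^\D_\lambda x_j$ of $\D$, either its $h$-image is a within-layer edge, which forces $L(x_i)=L(x_j)$ or $\bot \in \{L(x_i),L(x_j)\}$, or it is a between-layer edge, which happens precisely when $(\sigma(x_i),\sigma(x_j),\lambda)=(x_d,x_{d'},\lambda_d)$ and then forces $K_\alpha \models L(x_i) E L(x_j)$, i.e. $L(x_i)\neq L(x_j)$ with both in $V$.

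Applying this to the edge $\sigma^{-1}(x_d) R^\D_{\lambda_d} \sigma^{-1}(x_{d'})$ (an edge of $\D$ because $x_d R^\D_{\lambda_d} x_{d'}$ and $\sigma$ is an automorphism), its $h$-image projects onto the deleted edge $(g(x_d),g(x_{d'}),\lambda_d)$, which is absent from $\F^\D_-$, so it must be a between-layer edge; hence $L(\sigma^{-1}(x_d)) \neq L(\sigma^{-1}(x_{d'}))$. On the other hand I would transport the undirected path from ({\it C-iv}) connecting $g(x_d)$ to $g(x_{d'})$ inside $\F^\D_-$, avoiding $w_0$ and staying in $g(\D)$, back to $\D$ through $\sigma^{-1}\circ g^{-1}$, obtaining an undirected path in $\D$ from $\sigma^{-1}(x_d)$ to $\sigma^{-1}(x_{d'})$ that avoids $x_0$. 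Every edge of this $\D$-path projects onto a path edge lying in $\F^\D_-$, so each of them is a within-layer edge; since the path misses $w_0$, none of the relevant layers is $\bot$, and therefore $L$ is constant along the whole path. This yields $L(\sigma^{-1}(x_d)) = L(\sigma^{-1}(x_{d'}))$, contradicting the previous paragraph and finishing the proof.

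The main obstacle, and the step I would be most careful about, is the verification that no edge of the ({\it C-iv}) path gets identified with the deleted between-layer edge; this is precisely what guarantees that all these edges are within-layer and hence layer-preserving. It hinges on the mutual exclusivity of the two disjuncts in the projection description (an edge already present in $\F^\D_-$ can only be realised within a single layer) together with the fact that the path lives in $\F^\D_-$ and avoids $w_0$. The role of the complete graph $K_\alpha$ is then confined to the single between-layer edge, where completeness (distinct vertices are adjacent) converts the demand that $L$ take different values at its two endpoints into the contradiction; note that the whole argument is uniform in $\alpha$.
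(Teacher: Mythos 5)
Your proof is correct, but it follows a genuinely different route from the paper's. The paper disposes of the lemma in two lines by reusing earlier results: for finite $\alpha$, the graph $K_\alpha$ is $\alpha$-colourable, so Lemma~\ref{colouring}, (C2) gives $\F^\D_\pm\times K_\alpha\not\models\gamma^\D_{\alpha(b-1)+1}$, while the Soundness half of Theorem~\ref{Theorem:Axiomatisation} shows that $\e^\D(w_0)$ would force every $\gamma^\D_m$ to hold at $w_0$; the infinite case then reduces to the finite one because a witness for $\e^\D(w_0)$ involves only finitely many vertices of $K_\alpha$. You instead unfold satisfaction of $\e^\D(w_0)$ into the existence of a homomorphism $\D\to\F^\D_\pm\times K_\alpha$ and run the combinatorial argument directly: project to $\F^\D_+$, use (\ref{5.1}) and (\ref{5.2}) of ({\it C-v}) to extract the automorphism $\sigma$, and play the layer-equality forced along the transported ({\it C-iv}) path against the layer-inequality forced at the deleted edge. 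This is essentially the engine inside the paper's proof of (C2), but specialised and streamlined: working with $\D$ itself rather than with the unravelled tree $\Tt$ eliminates the bookkeeping with the preimages $t_i, t_i'$, and the completeness of $K_\alpha$ lets the layer projection itself replace the colouring, so no valuation is needed and the argument is uniform in $\alpha$. The paper's route buys brevity by reusing (C2), which is needed elsewhere anyway; yours buys a self-contained proof that also dispenses with the finite-to-infinite reduction. The two points you flag as delicate do go through: $\sigma=g^{-1}\circ\pr\circ h$ is a well-defined automorphism of $\D$ fixing $x_0$ (apply ({\it C-v}) to both $\pr\circ h$ and $g$, noting that (\ref{5.1}) makes $\pr\circ h$ a bijection onto the image of $g$), and no edge of the ({\it C-iv}) path can fall under the between-layer clause, since its projection already lies in $\F^\D_-$ and the two clauses of the projection description of $R^{\D,G}_\lambda$ are mutually exclusive.
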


\begin{proof}
For finite $\alpha$ this is a consequence of Lemma~\ref{colouring}, (C2) and the fact that
$\F\models \e^\D(x)$ implies $\F\models \gamma^\D_i$ for all $i \in \omega$ and every Kripke frame $\F$ (Theorem~\ref{Theorem:Axiomatisation}, Soundness). Then notice that
if $\F^\D_\pm \times K_\alpha \models \e^\D(w_0)$ for infinite $\alpha$, then 
$\F^\D_\pm \times K_{\alpha'} \models \e^\D(w_0)$ for some finite $\alpha'$, 
because of the form of $\e^\D(x_0)$.
\end{proof}

For a point $z \in W^{\D,K_\alpha}$ let $\pi_z$ denote the pricipal ultrafilter corresponding to the point $z$.

\begin{lemma}\label{complete-2}
$(\F^\D_\pm \times K_\alpha)^{u.e.} \models \e^\D(\pi_{w_0})$ for every infinite $\alpha$.
\end{lemma}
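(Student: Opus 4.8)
The plan is to exhibit explicit witnesses for the existential quantifiers of $\e^\D$ among the ultrafilters of $W^{\D,K_\alpha}$. Fix a non-principal ultrafilter $D$ over the vertex set $V$ of $K_\alpha$, which exists precisely because $\alpha$ is infinite. For $1 \le i \le n$ I would take $u_i$ to be the ``fibre'' ultrafilter concentrated on the copies of $g(x_i)$, namely
$$u_i = \{ X \subseteq W^{\D,K_\alpha} \mid \{ v \in V \mid (g(x_i), v) \in X \} \in D \},$$
and set $u_0 = \pi_{w_0}$. Since $v \mapsto (g(x_i), v)$ is injective and $D$ is an ultrafilter, each $u_i$ is routinely checked to satisfy (u1)--(u3), so it is an ultrafilter over $W^{\D,K_\alpha}$. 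It then suffices to prove that $u_i \, R_\lambda^\ue \, u_j$ holds in $(\F^\D_\pm \times K_\alpha)^{u.e.}$ for every edge $x_i R^\D_\lambda x_j$ of $\D$; for then $\k^\D(\pi_{w_0}, u_1, \ldots, u_n)$ holds, and hence so does $\e^\D(\pi_{w_0})$.

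The verification splits according to the type of edge. For an edge incident with the root I would use that $g(x_0) = w_0$, that the inner cycle avoids $x_0$ so that $x_d, x_{d'} \neq x_0$ and hence no edge touching the root is the deleted one, and that in $\F^\D_\pm \times K_\alpha$ the point $w_0$ is joined to every copy $(g(x_j), v)$ whenever $w_0 (R^\D_-)_\lambda g(x_j)$ (and symmetrically for edges into the root). Since $u_0 = \pi_{w_0}$ is principal, checking $u_0 \, R_\lambda^\ue \, u_j$ reduces, after unwinding the definition of $R_\lambda^\ue$, to producing a single witness in each $D$-large fibre $V_X$ (which is nonempty as $\emptyset \notin D$), and this is immediate. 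For an edge $x_i R^\D_\lambda x_j$ with $i, j \ge 1$ that is \emph{not} the deleted edge, injectivity of $g$ together with ({\it C-i}) gives $g(x_i) (R^\D_-)_\lambda g(x_j)$, so the same-layer clause of $R^{\D,K_\alpha}$ yields $(g(x_i), v) R_\lambda (g(x_j), v)$ for every $v$; then for any $X \in u_j$ with $D$-large fibre $V_X$, each $v \in V_X$ witnesses $(g(x_i), v) \in R_\lambda^{-1}(X)$, so $R_\lambda^{-1}(X) \in u_i$, as required.

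The crux is the single deleted edge $x_d R^\D_{\lambda_d} x_{d'}$, which is exactly where completeness of $K_\alpha$ and non-principality of $D$ are used. Here $g(x_d) (R^\D_-)_{\lambda_d} g(x_{d'})$ fails by ({\it C-i}), so the same-layer argument is unavailable; instead I would invoke the cross-layer clause of $R^{\D,K_\alpha}$, which in the complete graph connects $(g(x_d), v')$ to $(g(x_{d'}), v'')$ for all $v'' \neq v'$. Given $X \in u_{d'}$ with $D$-large fibre $V_X$, non-principality of $D$ forces $V_X$ to be infinite, so for every $v'$ there is some $v'' \in V_X$ with $v'' \neq v'$; this $v''$ witnesses $(g(x_d), v') \in R_{\lambda_d}^{-1}(X)$. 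Hence the whole fibre $\{(g(x_d), v') \mid v' \in V\}$ lies in $R_{\lambda_d}^{-1}(X)$, and since that fibre belongs to $u_d$, upward closure (u2) gives $R_{\lambda_d}^{-1}(X) \in u_d$, establishing $u_d \, R_{\lambda_d}^\ue \, u_{d'}$. I expect this deleted-edge case to be the main obstacle, as it is the one place where a single layer does not suffice and one must jump between layers; the remaining cases are routine unwindings of the definitions of $R^{\D,K_\alpha}$ and of $R_\lambda^\ue$, together with Lemma~\ref{uf3}.
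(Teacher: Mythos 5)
Your proposal is correct and follows essentially the same route as the paper: the paper likewise takes $u_0=\pi_{w_0}$ and, for $i\ge 1$, the ``fibre'' ultrafilters $\mu_i$ defined by $A\in\mu_i$ iff $h(A\cap(\{g(x_i)\}\times\alpha))$ lies in a fixed non-principal ultrafilter over $\alpha$, which are exactly your $u_i$. The only difference is that the paper declares the edge-by-edge verification ``easy to check'' and defers the details to Section~5 of \cite{KZ}, whereas you carry out the case split (root edges, undeleted same-layer edges, the deleted cross-layer edge) explicitly and correctly.
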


\begin{proof}
Suppose that $W^\D_\pm = \{w_0,w_1, \ldots, w_{b-1}\}$, . We put 
$W_0 = \{w_0\}$, and $W_i = \{w_i\} \times \alpha$. 
Let $h : W^{\D,K_\alpha}\setminus\{w_0\} \to \alpha$ be the projection given by the formula
$h((w,v)) = v$. 

First, we prove that $(\F^\D_\pm \times K_\alpha)^{u.e.} \models \e^\D(w_0)$. 
To this end we fix an arbitrary non-principal ultrafilter $u$ over $\alpha$, put $X_i = \{g(x_i)\} \times \alpha$ (thus every $X_i = W_j$ for some j) 
and  for $1\le i \le n$ define ultrafilters $\mu_i$ by  the following condition
$$ A \in \mu_i \iffa h (A \cap X_i) \in u.$$
Also, put $\mu_0 = \pi_{w_0}$. It is easy to check that $x_i R^\D_\lambda x_j$ implies
$\mu_i (R^{\D,K_\alpha})^{u.e.}_\lambda \mu_j$,  and so\\  $(\F^\D_\pm \times K_\alpha)^{u.e.} 
\models \k^\D(\mu_0,\mu_1, \ldots, \mu_n)$ (for details, see Section 5 of \cite{KZ}).

Now let us show that for arbitrary ultrafilter $v$, such that $\{w_0\} \notin v$
$(\F^\D_\pm \times K_\alpha)^{u.e.} \models \e^\D(v)$. Notice that $W^{\D,K_\alpha} = \{w_0\}\cup W_1 \cup \ldots\cup W_m.$ 
Hence, by Lemma~\ref{ultra-n} for some $s$ $W_{s} \in v$. 
Let $u$ be  the ultrafilter on $\alpha$ defined by 
condition $$A \in u \iffa h (A \cap W_s) \in u.$$ But, according to  ({\it C-vi}), there
exist 
$w_{k_1}, \ldots, w_{k_n}$ such that $\F^\D_+ \models \k^\D(p(w_{s}), w_{k_1}, \ldots, w_{k_n})$.
Now, define ultrafilters $\mu_i$ for $i = 1, \dots, n$ by the condition 
$$ A \in \mu_i \iffa h (A \cap W_{k_i}) \in u.$$ We claim that 
$(\F^\D_\pm \times K_\alpha)^{u.e.} \models \k^\D(v, \mu_1, \ldots, \mu_n)$. Thus 
$(\F^\D_\pm \times K_\alpha)^{u.e.} \models \forall x (x\neq w_0 \to \e^\D(x))$, and so
$(\F^\D_\pm \times K_\alpha)^{u.e.} \models \e^\D(w_0)$.
\end{proof}

\begin{lemma} \label{complete-3}
Let $u$ be an ultrafilter over $\omega$. Then
$\prod^u_{i\in \omega} (\F^\D_\pm \times K_i)$ is isomorphic to  $\F^\D_\pm \times \prod^u_{i\in \omega} K_i$.
\end{lemma}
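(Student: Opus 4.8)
The plan is to build an explicit isomorphism $\Phi$ between the two Kripke frames and then check that it preserves and reflects each relation $R_\lambda$ by a finite case analysis on the clauses defining the pseudoproduct. Write $b = |W^\D_\pm|$, let $V_i$ be the vertex set of $K_i$, and let $V = \prod^u_{i\in\omega} V_i$ be the vertex set of the ultraproduct graph $\prod^u_{i\in\omega} K_i$, whose edge relation $E$ is, by definition of the ultraproduct of relational structures, given by $\lceil\bar v\rceil\, E\, \lceil\bar w\rceil \iffa \{i : v_i\neq w_i\}\in u$ (using that each $K_i$ is complete, so $v_i\, E^{K_i}\, w_i \iffa v_i\neq w_i$). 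Recall that the carrier of $\F^\D_\pm\times K_i$ is $\{w_0\}\cup(W^\D_\pm\setminus\{w_0\})\times V_i$, so via $\pr$ and $h$ (which send $(y,v)\mapsto y$ and $(y,v)\mapsto v$ respectively, and $w_0\mapsto w_0,\ w_0\mapsto\bot$) every point carries a \emph{frame-component} in $W^\D_\pm$ and, unless it is $w_0$, a \emph{vertex-component} in $V_i$. Given $\lceil\bar\alpha\rceil$ in $\prod^u_{i\in\omega}(\F^\D_\pm\times K_i)$, apply Lemma~\ref{ultra-n} to the partition of each carrier induced by $\pr$: since $W^\D_\pm$ is finite there is a unique $y_{\bar\alpha}\in W^\D_\pm$ with $\{i : \pr(\alpha_i)=y_{\bar\alpha}\}\in u$. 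If $y_{\bar\alpha}=w_0$ put $\Phi(\lceil\bar\alpha\rceil)=w_0$; otherwise, on the $u$-large set where $\pr(\alpha_i)=y_{\bar\alpha}$ we have $\alpha_i=(y_{\bar\alpha},v_i)$, and (filling the remaining, $u$-small, coordinates arbitrarily) put $\Phi(\lceil\bar\alpha\rceil)=(y_{\bar\alpha},\lceil\bar v\rceil)\in(W^\D_\pm\setminus\{w_0\})\times V$.

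The map $\Phi$ is well defined, since $\bar\alpha\sim_u\bar\alpha'$ forces the same $y_{\bar\alpha}$ and agreement of the vertex-components on a $u$-large set, and the arbitrary filling affects only a $u$-small set. It is a bijection, with inverse sending $w_0$ to the class of the constant sequence $w_0$ and $(y,\lceil\bar v\rceil)$ to the class of $(\alpha_i)$ where $\alpha_i=(y,v_i)$ for $i$ in the cofinite set with $V_i\neq\emptyset$ and $\alpha_i=w_0$ otherwise; these are mutually inverse modulo $\sim_u$. In particular $\Phi$ carries the distinguished point to $w_0$, so it is even an isomorphism of pointed frames.

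It remains to show $\lceil\bar\alpha\rceil\, R_\lambda\, \lceil\bar\beta\rceil$ in the ultraproduct iff $\Phi(\lceil\bar\alpha\rceil)\, R_\lambda\, \Phi(\lceil\bar\beta\rceil)$ in $\F^\D_\pm\times\prod^u K_i$. Put $y=y_{\bar\alpha}$, $z=y_{\bar\beta}$ and intersect the two stabilising sets to obtain a $u$-large $U$ on which $\pr(\alpha_i)=y$ and $\pr(\beta_i)=z$; then $\lceil\bar\alpha\rceil\, R_\lambda\,\lceil\bar\beta\rceil \iffa \{i\in U : \alpha_i\, R^{\D,K_i}_\lambda\, \beta_i\}\in u$. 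On $U$ the frame conditions $\F^\D_-\models y R_\lambda z$ and $\F^\D_+\models y R_\lambda z$ are constant in $i$ because $\F^\D_\pm$ is fixed, so I split into cases matching the two disjuncts of the $(\pr,h)$-description of the pseudoproduct relation. If $\F^\D_-\models y R_\lambda z$: when $y=w_0$ or $z=w_0$ the clause $\bot\in\{h(\alpha_i),h(\beta_i)\}$ holds identically and both sides hold; when $y,z\neq w_0$ the $i$-th conjunct reduces to $v^\alpha_i=v^\beta_i$, so the left side holds iff $\lceil\bar v^\alpha\rceil=\lceil\bar v^\beta\rceil$, i.e. iff $h(\Phi(\lceil\bar\alpha\rceil))=h(\Phi(\lceil\bar\beta\rceil))$, matching the first disjunct. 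If $\F^\D_-\not\models y R_\lambda z$ but $\F^\D_+\models y R_\lambda z$ — which, since by ({\it C-i}) the frames differ only in the single edge $(g(x_d),g(x_{d'}),\lambda_d)$, forces $y=g(x_d)$, $z=g(x_{d'})$, $\lambda=\lambda_d$ and in particular $y,z\neq w_0$ — the $i$-th conjunct reduces to $K_i\models v^\alpha_i\, E\, v^\beta_i$, i.e. $v^\alpha_i\neq v^\beta_i$, so the left side holds iff $\lceil\bar v^\alpha\rceil\, E\, \lceil\bar v^\beta\rceil$ in $\prod^u K_i$, matching the second disjunct. If $\F^\D_+\not\models y R_\lambda z$ both disjuncts fail and no $i\in U$ is related, so both sides fail. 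In every case the two sides agree, so $\Phi$ is an isomorphism.

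The main obstacle is this relation check: one must verify that the ultrafilter passes cleanly through each clause of the pseudoproduct definition. The enabling observation is that finiteness of $W^\D_\pm$, via Lemma~\ref{ultra-n}, freezes the frame-components on a $u$-large set, after which each clause's truth depends only on a fixed $\F^\D_\pm$-edge (constant in $i$) together with an equality or an $E^{K_i}$-edge between vertex-components — and both of the latter commute with the ultraproduct by the very definitions of $\sim_u$ and of the ultraproduct edge relation. The only delicate point is confirming that the ``mixed'' clause ($\F^\D_+$ but not $\F^\D_-$) can fire only when both endpoints differ from $w_0$, which is exactly why $\F^\D_+$ and $\F^\D_-$ were arranged in Lemma~\ref{rank1} to differ in a single edge avoiding $w_0$.
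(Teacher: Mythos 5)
Your proposal is correct and follows essentially the same route as the paper: both define the isomorphism by using Lemma~\ref{ultra-n} to freeze the $W^\D_\pm$-component on a $u$-large set and then pair the surviving frame component with the ultraproduct of the vertex components. The only difference is cosmetic --- the paper re-indexes the vertex sequence along the large set while you keep the original index set and fill the $u$-small remainder arbitrarily --- and you additionally carry out the relation-preservation check that the paper leaves as a one-line claim.
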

\begin{proof}
Let $\lceil z_0, v_0, z_1, v_1, z_2, v_2, \dots \rceil = \lceil \bar z, \bar v \rceil$ be a point of 
$\prod^u_{i\in \omega} (\F^\D_\pm \times K_i)$. 
Set $W_j = \{i \in \omega \mid  z_i = w_j\}$ for $j = 0, \dots, m$. By Lemma~\ref{ultra-n}, there exists
unique $j$ such that $W_j \in u$. Suppose, $W_j = \{i_0, i_1, i_2, \dots\}.$ We put
$f (\lceil z_0, v_0, z_1, v_1, z_2, v_2, \dots \rceil) = (w_j, \lceil x_{i_0}, x_{i_1}, x_{i_2}, \dots \rceil).$
We claim that $f$ is an isomorphism between 
$\prod^u_{i\in \omega} (\F^\D_\pm \times K_i)$ and $\F^\D_\pm \times \prod^u_{i\in \omega} K_i$.
\end{proof}

\section{Erd\"os graphs, or putting it all together}

In this section we finally prove the following theorem, the strongest result of this paper.

\begin{teo}\label{main_can}
Let $\D$ be a minimal connected diagram with inner cycle and let\\
$L = \Log(\forall x_0 \e^\D(x_0))$. Then any axiomatisation of $L$ requires 
infinitely many non-canonical formulas.
\end{teo}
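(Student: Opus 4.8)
The plan is to derive the theorem from the abstract criterion of Lemma~\ref{l16}, instantiated with the axioms $\gamma^\D_m$ of Section~\ref{section:axiomatization}: by the Corollary to Theorem~\ref{Theorem:Axiomatisation} we have $L = \K + \{\gamma^\D_m \mid m\in\omega\}$. First I would record the monotonicity hypothesis of Lemma~\ref{l16}, namely that $\gamma^\D_{i_1}$ implies $\gamma^\D_{i_2}$ whenever $i_2 < i_1$. Semantically this is immediate, since an $i_2$-colouring of a $d$-neighbourhood is a special case of an $i_1$-colouring, so a frame coping with all $i_1$-colourings copes with all $i_2$-colourings; syntactically it follows because $\gamma^\D_{i_2}$ is, up to the standard substitution, an instance of $\gamma^\D_{i_1}$. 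It then remains to verify the combinatorial hypothesis of Lemma~\ref{l16}: for every $l$ there is an $n$ such that for every $k$ there is an inverse system of finite Kripke frames $\{\F_i\}$ with (L1) $\F_i \models \gamma^\D_k$ for all $i$, (L2) $\liminv \F_i \models \gamma^\D_l$, and (L3) $\liminv \F_i \not\models \gamma^\D_n$.

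I would take $\F_i = \F^\D_\pm \times G_i$, where $\F^\D_\pm$ is furnished by Lemma~\ref{rank1} (applicable since $\D$ is minimal with an inner cycle) and the $G_i$ are finite graphs to be chosen, the bonding maps being $\mathrm{id}_{\F^\D_\pm} \times f_{i+1,i}$ induced by graph homomorphisms $f_{i+1,i}\colon G_{i+1}\to G_i$. A short check, exactly as in Lemma~\ref{complete-3} but with inverse limits in place of ultraproducts, shows that these are frame homomorphisms and that $\liminv(\F^\D_\pm \times G_i) \cong \F^\D_\pm \times \liminv G_i$; the point is that the $\F^\D_\pm$-coordinate is constant along compatible sequences while the graph coordinates assemble into $\liminv G_i$, and the limit edge relation matches the pseudoproduct one. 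Write $G = \liminv G_i$.

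Everything is now reduced to chromatic numbers through Lemma~\ref{colouring}, with $b = |W^\D_\pm|$. By (C1), condition (L1) holds as soon as $\chi(G_i) > 2^{bk}$ for every $i$, and, applying (C1) with $l$ in place of $k$, condition (L2) holds as soon as $\chi(G) > 2^{bl}$. By (C2), if $\chi(G) \le N$ then $\F^\D_\pm \times G \not\models \gamma^\D_{N(b-1)+1}$, so (L3) holds with $n := N(b-1)+1$. Hence, fixing $l$, I would set $M := 2^{bl}+1$, $N := M$, and $n := M(b-1)+1$ (all depending on $l$ alone, as required), and the task becomes: for each $k$, produce an inverse system of finite graphs all of chromatic number exceeding $2^{bk}$ whose inverse limit $G$ has chromatic number exactly $M$, so that $2^{bl} < \chi(G) = M \le N$.

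The construction of these graphs is the crux. The required lower bound $\chi(G) > 2^{bl}$ can be secured cheaply by a disjoint-union trick: if $\{H_i\}$ is an inverse system of finite loopless graphs with $\chi(H_i) > 2^{bk}$ and with $\liminv H_i$ bipartite, then setting $G_i = H_i \sqcup K_M$ with bonding maps $f_{i+1,i} \sqcup \mathrm{id}_{K_M}$ yields $\chi(G_i) \ge \chi(H_i) > 2^{bk}$, while $\liminv G_i = (\liminv H_i) \sqcup K_M$ has chromatic number $\max(2,M) = M$. Thus the whole theorem rests on the existence, for each $K$, of an inverse system of finite graphs all of chromatic number $> K$ whose inverse limit is bipartite. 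This is precisely the point at which Erd\H{o}s' theorem on graphs of high girth and high chromatic number, packaged as the probabilistic graphs of Hodkinson and Venema (cf.\ \cite{GoldblattHodkinson2006}), is invoked, and I expect it to be the main technical obstacle: high chromatic number at every finite stage must coexist with the collapse of all odd cycles, hence bipartiteness, in the limit, where the inverse-limit edge relation keeps only those edges present at every stage. Once such systems $\{H_i\}$ are available, assembling $\{\F^\D_\pm \times G_i\}$ and appealing to Lemma~\ref{l16} completes the proof.
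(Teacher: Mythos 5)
Your proposal is correct and follows essentially the same route as the paper: instantiate Lemma~\ref{l16} with the axioms $\gamma^\D_m$, take $\F_i = \F^\D_\pm \times G_i$ with $G_i = H_i \sqcup K_{2^{bl}+1}$, reduce (L1)--(L3) to chromatic numbers via Lemma~\ref{colouring}, and supply the inverse systems $\{H_i\}$ of finite graphs with chromatic number $>2^{bk}$ and bipartite inverse limit from the Erd\H{o}s-style probabilistic graphs of Hodkinson and Venema (the paper cites Theorem~2.3 of \cite{Hodkinson03canonicalvarieties}, which also provides the edge-lifting property (E1) needed to make the bonding maps p-morphisms). Your choice $n = (2^{bl}+1)(b-1)+1$ and the disjoint-union trick for securing $\chi(\liminv G_i) = 2^{bl}+1$ coincide with the paper's argument.
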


In order to do it, we use the following theorem by I. Hodkinson and Y. Venema. Its proof uses probabilistic
graphs of Paul Erd\"os, and we do not reproduce it.

\begin{teo}[Theorem 2.3 from  \cite{Hodkinson03canonicalvarieties}]\label{t17} 
Let $s \ge 2$. There are finite graphs $H_0, H_1, \ldots$ and surjective homomorphisms 
$\rho_i: H_{i+1} \to H_i$ for $i <\omega$ such that for each $i$,
\begin{enumerate}[({E}1)]
\item for each edge $\{x,y\}$ of $H_i$ and each $x' \in \rho_i^{-1} (x)$, there is $y' \in \rho_i^{-1}(y)$ such that
$\{x',y'\}$ is an edge of $H_{i+1}$,
\item $H_i$ has no odd cycles of length $\le i$,
\item $\chi(H_i) = s$ ($\chi$ is the chromatic number).
\end{enumerate}
\end {teo}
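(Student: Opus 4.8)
The plan is to verify the three hypotheses (L1)--(L3) of Lemma~\ref{l16} for the sequence $\gamma_i:=\gamma^\D_i$ that axiomatises $L$ (the Corollary to Theorem~\ref{Theorem:Axiomatisation}), taking the finite Kripke frames from pseudoproducts $\F^\D_\pm\times G$ whose factor graph $G$ is built out of the Erd\"os graphs supplied by Theorem~\ref{t17}. First I would record that the $\gamma^\D_i$ meet the monotonicity requirement of Lemma~\ref{l16}: substituting $\bot$ for $p_{m+1}$ in $\gamma^\D_{m+1}$ collapses its premise to $\b^{\le d}(p_1\lor\dots\lor p_m)$ and kills every disjunct of the conclusion whose colouring $\kappa$ uses $p_{m+1}$, leaving exactly $\gamma^\D_m$; since a normal logic is closed under substitution, $\gamma^\D_{i_1}$ implies $\gamma^\D_{i_2}$ whenever $i_2<i_1$. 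Writing $b=|W^\D_\pm|$, Lemma~\ref{colouring} then reads: a factor graph that is not $2^{bk}$-colourable forces $\F^\D_\pm\times G\models\gamma^\D_k$ by (C1), whereas an $N$-colouring of $G$ refutes $\gamma^\D_{N(b-1)+1}$ by (C2). So, through the pseudoproduct, validity and refutation of the axioms are governed entirely by the chromatic number of the factor graph.

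The delicate point is the quantifier pattern ``for every $l$ there is $n$ such that for every $k$\dots''. For a fixed $l$ the limit frame must \emph{validate} $\gamma^\D_l$, hence by (C1) its factor graph must have chromatic number above $2^{bl}$; at the same time the finite frames must validate the far stronger $\gamma^\D_k$ for an \emph{arbitrary} $k$, forcing their factor graphs to have chromatic number above $2^{bk}$; and the limit must still \emph{refute} a fixed $\gamma^\D_n$, so its factor graph must be colourable with boundedly many colours. The Erd\"os graphs of Theorem~\ref{t17} handle the first and third demands together: each $H_i$ has chromatic number $s$ (choosable as large as we wish), while by the high odd girth (E2) the inverse limit $H_\infty=\liminv H_i$ has no odd cycle and is therefore bipartite. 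To satisfy the second demand without letting the limit collapse to chromatic number $2$, I would raise the floor with a fixed clique: given $l$ put $c=2^{bl}+1$ and $n=c(b-1)+1$; given $k$ apply Theorem~\ref{t17} with $s>2^{bk}$, set $G_i=H_i\sqcup K_c$ with bonding maps $\rho_i\sqcup\mathrm{id}_{K_c}$, and let $\F_i=\F^\D_\pm\times G_i$.

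It then remains to check the three conditions. Since $\chi(G_i)=\max(s,c)>2^{bk}$ for every $i$, (C1) yields $\F_i\models\gamma^\D_k$, which is (L1). Using the lifting property (E1) the maps $\rho_i\sqcup\mathrm{id}$ are surjective graph homomorphisms, so the induced maps on the pseudoproducts are frame homomorphisms and one computes $\liminv\F_i=\F^\D_\pm\times(H_\infty\sqcup K_c)$, whose factor graph has chromatic number $\max(2,c)=c$. As $c>2^{bl}$, applying (C1) to the limit gives $\liminv\F_i\models\gamma^\D_l$, which is (L2); and as $H_\infty\sqcup K_c$ is $c$-colourable, (C2) gives $\liminv\F_i\not\models\gamma^\D_{c(b-1)+1}=\gamma^\D_n$, which is (L3). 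Feeding this inverse system into Lemma~\ref{l16} shows that no axiomatisation of $L$ can consist of finitely many non-canonical formulas together with canonical ones, i.e. every axiomatisation of $L$ requires infinitely many non-canonical axioms.

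The main obstacle I expect is precisely the tension inside the quantifier block: one must drive the chromatic number of the finite factors to infinity (to validate $\gamma^\D_k$ for all $k$) while pinning the chromatic number of the inverse limit at a finite value that is nevertheless above $2^{bl}$ (to validate $\gamma^\D_l$ yet still refute $\gamma^\D_n$). This is what forces the combination of the bipartite-limit Erd\"os graphs with the fixed clique $K_c$. The accompanying technical heart is the verification that the pseudoproduct commutes with the inverse limit, $\liminv(\F^\D_\pm\times G_i)=\F^\D_\pm\times\liminv G_i$, and that (E1) keeps the bonding maps surjective bounded morphisms, so that $H_\infty$ genuinely retains edges and has chromatic number exactly $2$ rather than degenerating.
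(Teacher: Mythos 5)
You have not proved the statement in question. The statement is Theorem~\ref{t17} itself: the \emph{existence} of finite graphs $H_0, H_1,\ldots$ with $\chi(H_i)=s$, with no odd cycles of length $\le i$, connected by surjective homomorphisms $\rho_i$ satisfying the edge-lifting property (E1). Your text takes that existence entirely on faith --- you literally write ``apply Theorem~\ref{t17} with $s > 2^{bk}$'' --- and then verifies conditions (L1)--(L3) of Lemma~\ref{l16} for the pseudoproducts $\F^\D_\pm\times G_i$ with $G_i = H_i \sqcup K_{2^{bl}+1}$ and $n=(2^{bl}+1)(b-1)+1$. That is a proof of Theorem~\ref{main_can}, not of Theorem~\ref{t17}, and indeed it reproduces the paper's section ``Erd\"os graphs, or putting it all together'' almost line by line: the same choice of $n$, the same disjoint union with a fixed clique, the same appeals to (C1)/(C2) of Lemma~\ref{colouring} and to the identities (\ref{e15}) and (\ref{e14}). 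As a reconstruction of that application your argument is essentially sound (your added remarks --- the substitution $p_{m+1}\mapsto\bot$ giving the monotonicity of the $\gamma^\D_m$, and the bipartiteness of $\liminv H_i$ from (E2) --- are correct points the paper leaves implicit), but none of it bears on the truth of Theorem~\ref{t17}.

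What is missing is the actual combinatorial content, which this paper deliberately does not contain: it states ``Its proof uses probabilistic graphs of Paul Erd\"os, and we do not reproduce it'' and cites Theorem~2.3 of \cite{Hodkinson03canonicalvarieties}. A real proof must first invoke (or redo) Erd\"os's probabilistic argument producing finite graphs of arbitrarily large chromatic number and arbitrarily large odd girth, then massage these so that the chromatic number is pinned \emph{exactly} at $s$ for every $i$, and finally build the surjections $\rho_i: H_{i+1}\to H_i$ so that (E1) holds --- keeping all three constraints compatible along an entire inverse system is the delicate part of Hodkinson and Venema's construction, and nothing in your sketch addresses any of it. In short: the gap is total with respect to the stated theorem, because the proposal proves the downstream result that \emph{uses} the theorem rather than the theorem itself.
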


Relying upon this theorem we show that the condition of Lemma \ref{l16} indeed holds for axiomatisation
$\gamma^\D_i$ of the logic in question. Recall that $b$ is the number of points in $\F^\D_{\pm}$, and that
$W^\D_\pm = \{w_0, w_1, \dots, w_{b-1}\}$.

Given $l$, we announce $n = (2^{bl}+1) \cdot (b-1) + 1$. Then, given $k$, we apply Theorem \ref{t17} with 
$s = 2^{bk} + 1$, 
and get a sequence of graphs $H_i$ and surjective homomorphisms
$\rho_i: H_{i+1} \to H_i$. Now, we define the sequence $G_i$ to be the disjoint union of $H_i$ and
$K_{2^{bl}+1}$ (here $K_m$ is the full graph on $m$ vertices), and extend $\rho_i$ to $G_{i+1}$ by putting it
identical on $K_{2^{bl}+1}$.  Finally, we set $\F_i = \F^\D_\pm \times G_i$, and define
morphisms $f_i: \F_{i+1} \to \F_{i}$ by $f_i(w_0) = w_0$ and  
$f_i((w_j, v)) = (w_j, \rho_i(v))$ for $j \ge 1$. (E1) guarantees that all $f_i$ are indeed p-morphisms. 
It is easy to see that
\begin{equation}\label{e15}
\liminv (\F^\D_\pm \times G_i) = \F^\D_{\pm}\times \liminv G_i \quad\mbox{and}
\end{equation}
\begin{equation}\label{e14}
\liminv (G_i) = (\liminv H_i) \cup K_{2^{bl}+1}.
\end{equation}

Now we have apply Lemma \ref{colouring} to ensure  that (L1), (L2), (L3) hold for formulas $\gamma^\D_i$.

(L1): By (E3), $H_i$ has chromatic number $2^{bk} + 1$, and so 
it cannot be painted in $2^{bk}$ colours. Since $H_i$ is a subgraph of $G_i$, $G_i$ also cannot
be painted in $2^{bk}$ colours. Thus, by (C1), $\F_i \models \gamma^\D_k$.

(L2): By (\ref{e14}), $K_{2^{bl}+1}$ is a subgraph of $\liminv G_i$, and so 
$\liminv G_i$ cannot be coloured in $2^{bl}$ colours. Again, by (\ref{e15}) and (C1), 
$\liminv \F_i \models \gamma^\D_l$.

(L3): By (E2), $\liminv H_i$ is two colourable. Hence
$\liminv (G_i)$ can be coloured in $2^{bl}+1$ colours, therefore,
by (C2), $\liminv \F_i \not\models \gamma^\D_n$ for $n  = (2^{bl}+1)\cdot (b-1) + 1$.
This finishes the proof of Theorem~\ref{main_can}.

\section{Main results}

\begin{teo}
Let $\D$ be a rooted  diagram, all undirected cycles of which pass through its root. 
Then ({\it I-i}) -- ({\it I-x}) hold.
\end{teo}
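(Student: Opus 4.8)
The plan is to funnel all ten conditions through a single generalised Sahlqvist formula and then let the generalised Sahlqvist theorem propagate its good behaviour. Since every undirected cycle of $\D$ passes through the root, the diagram has no inner cycle, so by the result of \cite{KZ} the formula $\E^\D(x_0)$ is locally modally definable by a generalised Sahlqvist formula $\sigma$: for every Kripke frame $\F$ and every point $w$ we have $\F, w \models \sigma$ iff $\F \models \E^\D(w)$. This is precisely ({\it I-i}), and since $\sigma$ is in particular a modal formula, also ({\it I-ii}). Requiring validity at all points turns the local equivalence into the global one $\F \models \sigma$ iff $\F \models \forall x_0 \E^\D(x_0)$, so $\sigma$ globally defines $\forall x_0 \E^\D(x_0)$ and ({\it I-iii}) holds. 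Throughout I write $\C = \Cla(\forall x_0 \E^\D(x_0))$, so that $\F \in \C$ iff $\F \models \sigma$.

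Next I would establish $\Log(\C) = \K + \sigma$. Soundness is immediate: every $\F \in \C$ validates $\sigma$, so $\sigma \in \Log(\C)$ and $\K + \sigma \subseteq \Log(\C)$. For the reverse inclusion I invoke the generalised Sahlqvist theorem \cite{Gor2}, by which $\sigma$ is canonical; hence the canonical frame of $\K + \sigma$ validates $\sigma$ and therefore lies in $\C$. Consequently any modal formula valid on all of $\C$ is in particular valid on this canonical frame, so true in the canonical model, and the truth lemma places it in $\K + \sigma$; thus $\Log(\C) \subseteq \K + \sigma$ and equality follows. This yields ({\it I-iv}) outright, while ({\it I-v}) and ({\it I-vi}) are immediate, $\sigma$ being a single formula in finitely many variables. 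Canonicity of $\sigma$ makes $\{\sigma\}$ a set of canonical formulas axiomatising $\Log(\C)$, giving ({\it I-vii}); and ({\it I-viii}), a weakening of ({\it I-vii}), follows by taking $\phi$ to be any $\K$-theorem and canonical set $\{\sigma\}$.

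Finally, ({\it I-ix}) and ({\it I-x}) are read off from the two facts that $\sigma$ axiomatises $\Log(\C)$ and that $\sigma$ defines $\C$. Since validity of a logic in a frame is equivalent to validity of any of its axiomatisations, $\F \models \Log(\C)$ iff $\F \models \sigma$ iff $\F \in \C$, so $\{\F \mid \F \models \Log(\C)\} = \C$, which is ({\it I-ix}). As $\C = \Cla(\forall x_0 \E^\D(x_0))$ is by definition cut out by a single first-order sentence, the frame class $\{\F \mid \F \models \Log(\C)\}$ is elementary, which is ({\it I-x}).

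The genuinely hard content is imported: producing from the absence of inner cycles a generalised Sahlqvist formula locally correspondent to $\E^\D(x_0)$ is exactly the input taken from \cite{KZ}, and canonicity together with local first-order definability come from \cite{Gor2}. The only step here demanding care is the completeness direction $\Log(\C) \subseteq \K + \sigma$, where one must check that canonicity of $\sigma$ places the canonical frame of $\K + \sigma$ inside $\C$, so that soundness can be run in reverse; everything else is bookkeeping that threads this single equality through the ten conditions.
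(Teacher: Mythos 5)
Your proof is correct and follows the same route as the paper: import from \cite{KZ} the fact that, in the absence of inner cycles, $\E^\D(x_0)$ is locally definable by a generalised Sahlqvist formula, then use the generalised Sahlqvist theorem of \cite{Gor2} (canonicity plus correspondence) to obtain $\Log(\C) = \K + \sigma$ and read off all ten conditions. The paper states this in two sentences and leaves the bookkeeping implicit; you have simply carried it out explicitly, and correctly.
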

\begin{proof}
It is enough to establish that $\e^\D(x_0)$ is locally modally definable by a generalised Sahlqvist formula, and
then use the generalised Sahlqvist theorem  \cite{Gor2} on completeness. A rigourous proof of modal definability
of $\e^\D(x_0)$ by a generalised Sahlqvist formula can be found in \cite{KZ}, Theorem 4.3. 
Since the proof is quite long, we do not reproduce it here. A shorter proof of modal definability of $\e^\D(x_0)$ was given earlier in \cite{Zolin1}.
\end{proof}

\begin{teo}
Let $\D$ be a minimal rooted diagram with a  cycle not passing through its root. 
Then ({\it I-i}) -- ({\it I-ix}) do not hold for the formula $\e(x_0) = \e^\D(x_0)$ and
$\C$, the class of all Kripke frames validating $\forall x_0 \e(x_0)$.
\end{teo}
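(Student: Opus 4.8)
The overall plan is to group the nine properties into clusters joined by easy implications and then refute one representative of each cluster using the results already at hand; the hypotheses (rooted, hence connected, with an inner cycle) are exactly those under which Lemma~\ref{rank1} supplies $\F^\D_\pm$, $g$, $d,d',\lambda_d$ satisfying ({\it C-i})--({\it C-vi}) and under which Theorem~\ref{main_can} applies. Write $L=\Log(\C)=L^\D$ and recall $\C\subseteq\{\F:\F\models L\}$. I would first record the chain $({\it I-i})\Rightarrow({\it I-ii})\Rightarrow({\it I-iii})\Rightarrow({\it I-ix})$: a generalised Sahlqvist formula is a modal formula, so $({\it I-i})$ yields $({\it I-ii})$; if a modal formula $\phi$ locally defines $\e^\D(x_0)$, i.e.\ $\F,w\models\phi\iffa\F\models\e^\D(w)$, then $\F\models\phi\iffa\F\models\forall x_0\,\e^\D(x_0)$, which is $({\it I-iii})$; and if $\phi$ globally defines $\forall x_0\,\e^\D(x_0)$ then $\{\F:\F\models\phi\}=\C$, so $\phi\in L$ and $\{\F:\F\models L\}\subseteq\{\F:\F\models\phi\}=\C$, giving $({\it I-ix})$. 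Hence refuting $({\it I-ix})$ disposes of $({\it I-i})$--$({\it I-iii})$ as well.

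To refute $({\it I-ix})$ I would use the pseudoproduct over an infinite complete graph. Put $\F=\F^\D_\pm\times K_\omega$. Lemma~\ref{complete-1} gives $\F\not\models\e^\D(w_0)$, so $\F\notin\C$, while the proof of Lemma~\ref{complete-2} yields $\F^{u.e.}\models\forall x\,\e^\D(x)$, i.e.\ $\F^{u.e.}\in\C$ and therefore $\F^{u.e.}\models L$. Lemma~\ref{Lemma:Standard:UE} then transfers validity downward, $\F\models L$, so $\F\in\{\F:\F\models L\}\setminus\C$ and $({\it I-ix})$ fails.

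For the canonical cluster $({\it I-iv})$, $({\it I-v})$, $({\it I-vii})$, $({\it I-viii})$ I would appeal to Theorem~\ref{main_can}: every axiomatisation of $L$ has infinitely many non-canonical formulas. But each of these four properties provides an axiomatisation with only finitely many non-canonical axioms --- finitely many axioms in total for $({\it I-v})$, none non-canonical for $({\it I-vii})$, at most one for $({\it I-viii})$, and (since a generalised Sahlqvist formula is canonical by the generalised Sahlqvist theorem \cite{Gor2}) none for $({\it I-iv})$ --- each contradicting Theorem~\ref{main_can}.

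The remaining property $({\it I-vi})$ is the main obstacle, since Theorem~\ref{main_can} counts non-canonical axioms and is silent about the number of variables: one must exclude axiomatisations in boundedly many variables but possibly infinitely many formulas. Here I would argue with Lemma~\ref{colouring}. Assume $L=\K+\Sigma$ with every member of $\Sigma$ in the variables $p_1,\dots,p_k$, and set $\F=\F^\D_\pm\times K_{2^{bk}+1}$, where $b=|W^\D_\pm|$. As $K_{2^{bk}+1}$ is $(2^{bk}+1)$-colourable, Lemma~\ref{colouring}, (C2) gives $\F\not\models\gamma^\D_{(2^{bk}+1)(b-1)+1}$, so $\F\not\models L$. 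It remains to show $\F\models\sigma$ for each $\sigma\in\Sigma$; since $\sigma$ has at most $k$ variables it suffices to verify $\F,\theta\models\sigma$ for $k$-generated $\theta$. The crucial observation, strengthening the proof of Lemma~\ref{colouring}, (C1), is that the augmented frame $\F^\dagger$ built there in fact lies in $\C$: by ({\it C-vi}) every point other than $w_0$ already roots a copy of $\D$ inside its own layer, and by ({\it C-iii}) the point $w_0$ roots one inside the attached copy of $\F^\D_+$, so $\F^\dagger\models\forall x_0\,\e^\D(x_0)$. Consequently $\F^\dagger\models L\ni\sigma$, and because the bisimulation exhibited in that proof matches $(\F,\theta)$ with $(\F^\dagger,\theta^\dagger)$ at every point, we get $\F,\theta\models\sigma$. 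Thus $\F\models\Sigma$, hence $\F\models L$, contradicting $\F\not\models L$; so $({\it I-vi})$ fails, and the theorem follows.
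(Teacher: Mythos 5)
Your proposal is correct and follows essentially the same route as the paper: the same three clusters of properties, refuted with the same frames $\F^\D_\pm\times K_\alpha$ and the same Lemmas~\ref{complete-1}, \ref{complete-2}, \ref{colouring} and Theorem~\ref{main_can}. The only (harmless) variations are that you obtain $\F^\D_\pm\times K_\omega\models L$ from Lemmas~\ref{complete-2} and~\ref{Lemma:Standard:UE} where the paper cites Lemma~\ref{colouring}~(C1), and that for ({\it I-vi}) you inline the Maksimova--Shehtman--Skvortsov criterion and make explicit the strengthening of (C1) it actually requires (that the $k$-generated models validate all of $L$, via $\F^\dagger\in\C$ and the bisimulation), a point the paper leaves implicit.
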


({\it I-i}) -- ({\it I-iii}): Since local modal definability implies global definability, 
it is enough to show that  $\forall x_0 \e(x_0)$ is not globally modally 
definable. Indeed, $\F^\D_\pm \times K_\omega \not \models \forall x_0 \e(x_0)$ (Lemma~\ref{complete-1})
but $(\F^\D_\pm \times K_\omega)^{u.e.} \models \forall x_0 \e(x_0)$ (Lemma~\ref{complete-2}), a contradiction
to Lemma~\ref{Lemma:Standard:UE}.

({\it I-iv}), ({\it I-v}), ({\it I-vii}), ({\it I-viii}): follow immediately from Theorem~\ref{main_can}.

({\it I-vi}):  
 As it is known (e.g. from \cite{Agi2}, but the idea dates back to \cite{MSS}), 
to prove that a normal modal logic $L$
is not axiomatisable with finitely many variables, it is sufficient to construct
a sequence of Kripke frames $\F_1, \F_2, \F_3, \dots$, such that 

(a) $\F_i \not \models L$ for all $i$.

(b) for all $k$ there exists $n$ such that $(\F_n, \theta) \models L$ for every 
$k$-generated valuation $\theta$ on $\F_n$.

And this can be easily done: take $\F_i = \F^\D_\pm \times K_i$ for all $i \in \omega$. Then 
(a) follows from Lemma~\ref{colouring}, (C2) and (b) follows from Lemma~\ref{colouring}, (C1), and the
fact that $K_n$ cannot be painted in less then $n$ colours (take $n = 2^{bk} + 1$).

({\it I-ix}): it is clear that $\F^\D_\pm \times K_\omega \notin \C$ (Lemma~\ref{complete-1}), but
 $\F^\D_\pm \times K_\omega \models L$ (Lemma~\ref{colouring}, (C1)).

({\it I-x}): suppose that there exists a first-order formula $\gamma$ such that 
$\F \models \gamma$ iff $\F \models L$ for each Kripke frame $\F$. Then by Lemma~\ref{colouring}, 
(C2) for all $i < \omega$ $\F^\D_\pm \times K_i \models \neg \gamma$, but by Lemma~\ref{complete-3}, 
$\prod^u_{i\in \omega} (\F^\D_\pm \times K_i) = \F^\D_\pm \times (\prod^u_{i\in \omega} K_i)$,
therefore  $\prod^u_{i\in \omega} (\F^\D_\pm \times K_i) \models \gamma$,
since $\prod^u_{i\in \omega} K_i$ is isomorphic to $K_\alpha$ for some infinite $\alpha$ (cf. the proof of 
Theorem~10 of \cite{hughes}). This contradicts Proposition~\ref{ultraproduct}.

\section{Discussion}

Let us discuss the family of Kripke frame classes
that are covered by our theorem. They are defined by first-order formulas of the form
$\forall x_0 \exists x_1 \dots \exists x_n \bigwedge x_i R_\lambda x_j$. 
This family is chosen because it is large enough to generate modal logics of both types of the dichotomy,
and narrow enough to allow the dichotomy to be proven. How interesting is this family?
On the one hand,  these formulas may seem rather artificial, 
since very few of them may be said to be orthodox in modal logic, though
they include well known reflexivity and reflexive-successor conditions. It also seems difficult to invent a practical reasoning problem involving these formulas. On the other hand, if we omit the universal quantifier $\forall x_0$, 
then we obtain  existential conjunctive formulas, which recently have received much attention both in the logical and computer science communities under the name of conjunctive queries.  If we close 
the class of existential conjunctive formulas with many free variables under 
restricted universal quantification, we obtain the class of $\forall\exists$-formulas discussed in Section 6 of \cite{KZ}, which includes many more first-order conditions traditional to  modal logic. 
Thus the formulas from this paper may be understood as `building blocks' for more complicated and interesting formulas, and so our result can be considered as a step towards more general dichotomy theorems. 
First-order formulas of the form $\mathsf{a}(x) = \exists y (x R_\lambda y \land \mathsf{b}(y))$  where $\mathsf{b}(y)$ is a generalised Kracht formula may be good candidates for further research; see \cite{BSS} and \cite{GoldblattHodkinson2006} for known information about the corresponding modal logics. However, it is still not clear how far this dichotomy can be pushed.
It is also interesting if the condition
\begin{enumerate}[{(\it I-xi})]
\item $\{\F \mid \F \models \Log(\C)\}$ is $\Delta$-elementary
 \end{enumerate}
may be added to ({\it I-i}) -- ({\it I-x}) without breaking the dichotomy (cf. \cite{Benthem76a}).

\paragraph{Acknowledgements.} 
The author thanks Philippe Balbiani and Ian Hodkinson for helpful and fruitful discussions. 
This research was supported by  RFBR - CNRS grant  \mbox{11-01-93107}.  The preparation of 
the final version of the paper was supported by RFBR - CNRS grant \mbox{14-01-93105}.


\end{document}